\documentclass[a4paper,12pt]{article}
\usepackage[english]{babel}
\usepackage{amsmath,amssymb,amsthm,xcolor,latexsym,dsfont}
\usepackage[T1]{fontenc}
\usepackage{authblk}
\usepackage{comment}
\usepackage[margin=2.5cm]{geometry}
\usepackage[hidelinks]{hyperref}
\usepackage{svg}
\usepackage{caption}

\allowdisplaybreaks

\newcommand{\assign}{:=}
\newcommand{\mathd}{\mathrm{d}}

\newtheorem{theorem}{Theorem}[section]
\newtheorem*{theorem*}{Theorem}
\newtheorem{corollary}[theorem]{Corollary}
\newtheorem{lemma}[theorem]{Lemma}
\newtheorem{proposition}[theorem]{Proposition}
\newtheorem{remark}[theorem]{Remark}
\newtheorem{definition}[theorem]{Definition}
\newtheorem{example}[theorem]{Example}
\newtheorem{assumption}{Assumption}

\newcommand{\keywords}[1]{%
  \par\smallskip
  \noindent\textbf{Keywords: }#1
}

\newcommand{\MSC}[2][]{%
  \par\smallskip
  \noindent\textbf{MSC Classification: }#2
}

\newcommand{\R}{\mathbb{R}}
\newcommand{\T}{\mathbb{T}}
\newcommand{\C}{\mathbb{C}}
\newcommand{\N}{\mathbb{N}}
\newcommand{\Z}{\mathbb{Z}}
\newcommand{\E}{\mathbb{E}}

\title{On the role of positivity preservation for high order approximations of the Dean--Kawasaki equation}

\date{}

\author[1]{Ana Damnjanovi\'c}
\author[2]{Ana Djurdjevac}
\author[1,3]{Nicolas Perkowski}

\affil[1]{Freie Universit\"at Berlin, Arnimallee 7, 14195 Berlin, Germany}
\affil[2]{University of Oxford, Mathematical Institute, Oxford, UK }
\affil[3]{Max Planck Institute for Mathematics in the Sciences, Leipzig, Germany}

\begin{document}
\maketitle
\begin{abstract}

We study a spectral regularization of the Dean--Kawasaki equation and quantify how the failure of positivity preservation affects its weak approximation of the empirical measure of independent Brownian particles.  For initial densities bounded away from zero, we prove a uniform-in-time weak error 
measured through the Laplace transform and prove a superpolynomial convergence rate for smooth test functions. When the initial density is allowed to vanish, the negative part of the regularized solution leads to weaker upper bounds, and a one-dimensional example gives a lower error bound ruling out superpolynomial convergence. 
We also present numerical experiments that confirm the theoretical results and illustrate  main  observations.

\keywords{Dean--Kawasaki equation, positivity preservation, spectral regularization, high order}

\MSC[]{60H15, 60J60, 60H35, 65C30, 60K35}
\end{abstract}

\section*{Introduction}

The Dean--Kawasaki equation is one of the central equations of fluctuating
hydrodynamics. In its simplest form it describes the evolution of the empirical
distribution of $N$ independent Brownian particles and is formally given by
\begin{equation}\label{eq:DK-intro}
    \partial_t \rho
    =
    \frac12 \Delta \rho
    +
    \frac1{\sqrt N}\nabla\cdot\bigl(\sqrt{\rho}\,\xi\bigr),
\end{equation}
on $\R_+ \times \T^d$, where $\xi$ denotes vector-valued space-time white noise. 
Dean~\cite{Dean1996} derived the equation with the help of It\^o's formula, showing that the empirical distribution is a martingale solution to the  stochastic conservation law~\eqref{eq:DK-intro}. The equation is therefore exact
at the microscopic level, but only as an equation for an atomic random measure.
Remarkably, Kawasaki~\cite{Kawasaki1994} had previously derived the same equation from a different, mesoscopic perspective. He coarse-grained the empirical measure by averaging it over small spatial scales and formally applied a local equilibrium closure to derive a closed infinite-dimensional Fokker--Planck equation for the evolution of the probability distribution of the regularized density. The fluctuations in Kawasaki's equation are described by the same conservative square-root noise as in Dean's equation. See the recent survey by Illien~\cite{Illien2025} for further background on the Dean--Kawasaki equation and its applications in theoretical physics.

Unfortunately, the Dean--Kawasaki equation is too singular to be interpreted as a
density-valued SPDE. The noise coefficient $\sqrt{\rho}$ formally is a non-linear function of a measure and not even locally Lipschitz continuous, and the equation is scaling
supercritical in the sense of singular SPDEs~\cite{Hairer2014, Gubinelli2015Paracontrolled}. But as Dean implicitly observed, and Konarovskyi, Lehmann and von Renesse~\cite{Konarovskyi2019} explicitly pointed out, the equation has a meaningful interpretation in the martingale sense. Konarovskyi et al. showed a duality result which is analogous to results for superprocesses and proves uniqueness in law for martingale solutions. However, the striking triviality result of Konarovskyi et al. shows that the only solutions to this martingale problem are exactly the empirical distributions of the particle system, and in contrast to stochastic transport equations~\cite{Kunita1990} it is impossible to consider density-valued initial conditions. In that sense Dean's equation is an exact reformulation of the particle dynamics, and it is unclear how to rigorously interpret  Kawasaki's mesoscopic equation without further regularization.

The Dean--Kawasaki equation is therefore not a coarse-grained model. To coarse-grain, we have to regularize the equation. Recent mathematical work has considered ad-hoc truncations of the Dean--Kawasaki equation and compared their behavior for large $N$ with that of the particle system with many particles. The approximation quality was verified by comparing large deviation rate functions for $N\to \infty$ \cite{Fehrman2023, Fehrman2024}, by verifying consistent fluctuation dissipation relations \cite{Donev2010, Bell2026}, or, taking a numerical analysis perspective, by deriving weak error bounds for the expectations of suitable smooth observables \cite{Cornalba2023, Cornalba2023Density, Djurdjevac2024, Djurdjevac2026}.

To obtain a meaningful interpretation of the truncated Dean--Kawasaki equation as an approximation of the particle system, it is desirable that solutions are positive and that the mass is conserved. Indeed, $\rho$ represents  a density approximation to the empirical distribution, so $\rho(t,x) \geq 0$ and $\int \rho(t,x)\mathd x = 1$.

The available approaches to regularizing the Dean--Kawasaki equation can be grouped according to whether they preserve this positivity. The finite-difference approximations developed in  \cite{Cornalba2023, Cornalba2023Density} achieve high-order weak approximation in high-density regimes. However, because the finite difference divergence operator is nonlocal, they do not preserve positivity and even for nonnegative initial data the approximations can take negative values. The analysis in~\cite{Cornalba2023, Cornalba2023Density} therefore assumes that the initial density is bounded away from zero and in that case proves exponential tail bounds for the negative part. 

A different approach was pursued in \cite{Djurdjevac2024, Djurdjevac2026}, where the noise was truncated and the square-root singularity was regularized. This regularization preserves the positivity and the mass, and it allows initial densities that locally vanish. But the existence of solutions requires a strong truncation of the noise because otherwise the noise term might inject more energy than the Laplacian dissipates which would break the parabolic nature of the equation. And this strong truncation of the noise limits the approximation quality, which is uniform across all initial distributions but for initial densities bounded away from zero achieves a worse rate than \cite{Cornalba2023, Cornalba2023Density}.

Another approach is taken in \cite{Fehrman2024, Fehrman2023}, where the It\^o noise in~\eqref{eq:DK-intro} is replaced by Stratonovich noise. This allows them to keep the square root coefficient and to truncate the noise less severely than in~\cite{Djurdjevac2024, Djurdjevac2026} while retaining the parabolic nature of the equation and preserving the positivity and mass of the solution. In~\cite{Fehrman2023, Wu2022, Ji2026} it is shown that this Stratonovich equation satisfies the same large deviation principle as the particle system in the limit $N \to \infty$. However, for finite $N$ the weak error for expectations of smooth nonlinear test functions is expected to be of the order of magnitude of the It\^o-Stratonovich corrector, that is $O(N^{-1})$, which is the same order as for the heat equation, the deterministic limit for $N \to \infty$.

Thus, some basic questions remain open: What is the effect of violating positivity preservation if we consider initial conditions that locally have low density? Even though here we focus on non-interacting Brownian particles, this question is particularly relevant in the interacting case for which the interaction can create low density regions~\cite{Wehlitz2025}. Moreover, in the high density regime~\cite{Cornalba2023} construct for any convergence rate $p$ an approximation which achieves rate $p$, so another natural question is if we can find an approximation which achieves superpolynomial convergence rates in this setting. And, finally, whether there exist positivity-preserving approximations achieving a higher order of convergence than those in \cite{Djurdjevac2024}.

In this paper we address only the first two of these questions for a spectral regularization of the Dean--Kawasaki equation. We consider
\begin{equation}\label{eq:approx_DK}
    \mathd u_t = \frac{1}{2} \Delta u _t \mathd t + \frac{1}{\sqrt{N}} \nabla \cdot (K_\varepsilon \ast (f(u_t^+)\mathd W_t)) 
\end{equation}
where $K_\varepsilon$ is a suitable mollifier at spatial scale $\varepsilon$, with compactly supported Fourier transform that is constant near $0$, and $f$ is a continuous function with $f(0)=0$ chosen so as to mimic the Dean--Kawasaki noise coefficient. The canonical example is  $f(x) = \sqrt{x}$, although we may replace this by a Lipschitz approximation in order to obtain a unique solution. The positive part $(\cdot)^+$ is taken to ensure that the noise is switched off at negative values of the solution, to prevent too large negative values. The main difference with respect to the approximation of \cite{Djurdjevac2024} is that the Wiener process $W$ is not truncated, and the spectral regularization $K_\varepsilon \ast \cdot$ acts on the entire noise coefficient $f(u_t^+)\mathd W_t$. This regularization is closely related to a spectral Galerkin approximation, which we use in the numerical experiments.

This choice has the advantage that when testing $u$ against a test function $\varphi$, the regularization $K_\varepsilon$ is moved to $\varphi$ and we can exploit the regularity of the test function. On the other hand, the nonlocal nature of the convolution breaks the positivity preservation of~\cite{Djurdjevac2024}, and it is not true that a.s. $u_t \geq 0$ for $t>0$. However, the spectral regularization plays an important stabilizing role. The convolution with $K_\varepsilon$ truncates frequencies larger than $O(\varepsilon^{-1})$, which removes the singular small-scale fluctuations of the Dean--Kawasaki noise. 
In high-density regimes, where the typical density is large and the relative fluctuations are small, this regularization can make negative excursions rare or small in magnitude. Thus, while the regularized equation does not preserve positivity, the convolution by $K_\varepsilon$, together with the scaling $N^{-1/2}$, can guarantee positivity with high probability in regimes where the density is sufficiently high compared with the strength of the noise.

Returning to the two questions raised above, we address them as follows: We derive both upper and lower bounds on the approximation error in the low density regime, i.e. for initial densities that may locally vanish. The lower bound is derived for a concrete one-dimensional example. This constitutes the first analysis of non-positivity preserving approximations of the Dean--Kawasaki equation allowing low densities. Notably, in dimensions one and two the upper bound is worse than the one obtained by the positivity preserving approximation of \cite{Djurdjevac2024}. The lower bound does not quite match it, but it indicates that in low density regimes approximations that do not preserve positivity have limited approximation quality. On the other hand, in the high density regime $\rho_0 \ge \rho_{\min} > 0$ we show that on smooth observables, the approximation~\eqref{eq:approx_DK} achieves a superpolynomial convergence rate. Together, these two observations suggest that hybrid schemes~\cite{Djurdjevac2025Hybrid}, combining high order approximations in high density regions with particle-based or other positivity-preserving approximations in low density regions, may be the most efficient at faithfully capturing the dynamics of the particle system.

For the rest of the introduction, we give a more technical description of our main results. Our first main result, Theorem \ref{thm:main-nonlocal-testfunction}, gives a uniform-in-time weak error bound measured through the Laplace transform,
\[
|\E [e^{\langle u_t, \varphi \rangle}] - \E [e^{\langle \mu_t, \varphi \rangle}] | 
\]
where  $(\mu_t)_{t \ge 0}$ is the empirical distribution of Brownian particles, $\varphi$ is a complex valued test function in $C^{\alpha}$, $\alpha >0$, under a smallness assumption relating $N$ and $\varepsilon$ ensuring that the particle number is large compared to the spatial resolution. The use of the Laplace transform is convenient for two reasons. First, it captures the probability law of $u_t$ and by Cauchy's integral formula, the weak error for the Laplace transform also controls the weak error for moments of all orders, see Corollary \ref{cor:moments}. Second, the Laplace transform is naturally connected to the duality of the empirical measure with the Hamilton--Jacobi--Bellman equation. In particular, for the terminal condition $\mu\mapsto e^{\langle \mu, \varphi\rangle}$, the duality of~\cite{Konarovskyi2019} provides an explicit solution to the infinite-dimensional Kolmogorov backward equation associated to the empirical distribution of independent Brownian particles, which is very convenient for the analysis. It would not be necessary to work with this explicit solution, and as in~\cite{Djurdjevac2026} we could also consider an approximate solution to the infinite-dimensional Kolmogorov equation, which would apply to interacting particle systems with more complex dynamics, but this would complicate the analysis and to a certain extent obscure the main message of the paper. In any case, the emphasis on the infinite-dimensional Markovian structure behind the Dean--Kawasaki equation distinguishes our analysis from~\cite{Cornalba2023, Cornalba2023Density} who take a finite-dimensional perspective based on cylinder functions. Let us also comment that for simplicity we restrict to the one-point distribution of $\mu_t$. But with minor additional effort it would be possible to consider a path Laplace transform, see Remark~\ref{rmk:multiple-times}.

The resulting error bound has three contributions: the initial approximation error, the Fourier cut-off error, and the error produced by the negative part of $u$.  In the high-density regime, when $\rho_0 \geq \rho_{\min}>0$, the contribution of the negative part is exponentially small. As a consequence, the spectral approximation achieves a superpolynomial weak convergence rate if $\varphi \in C^\infty$: For $f(x)=\sqrt{x^+}$, the dynamic part of the weak error is of order
\begin{equation}\label{eq:bound-high-density}
  O\!\bigl(N^{-1-\alpha/d}(\log N)^{3\alpha/d}\bigr)
  \qquad\text{as }N\to\infty,
\end{equation}
for \emph{all} $\alpha>0$, provided that we choose $\varepsilon=N^{-1/d}(\log N)^{3/d}$.

The situation changes sharply in the low density regime. If $\rho_{\min} = 0$, the exponential suppression of the negative part disappears, and the upper bound deteriorates to
\begin{equation}\label{eq:bound-low-density}
  O\!\bigl(N^{-1}\varepsilon^\alpha + N^{-3/2}\varepsilon^{-d/2}\bigr)
  =
  O\!\bigl(N^{-(3\alpha + d)/(2\alpha+d)}\bigr) 
\end{equation}
for the optimal choice $\varepsilon = N^{-1/(2\alpha+d)}$. We emphasize that the bounds~\eqref{eq:bound-high-density} and~\eqref{eq:bound-low-density} are both uniform in time, which we achieve by exploiting the spectral gap of the Laplacian on $\mathbb T^d$. To the best of our knowledge, this is the first such uniform approximation result for Dean--Kawasaki equations.

For smooth test functions the bound~\eqref{eq:bound-low-density} is far worse than the superpolynomial rate available when $\rho_{\min}>0$. And for $d \in \{1,2\}$ it is also worse than that of the positivity preserving SPDE approximation suggested in \cite{Djurdjevac2024}. While here we compare two upper bounds, we will later comment on lower bounds and discuss that the loss of positivity preservation indeed limits the approximation quality in the low density regime.

In the previous results, we allowed the initial density $\rho_0$ to have large concentrations.
To do this, we mainly controlled the $L^1$ norm of $u_t$, and therefore we needed uniform control of the test function $\varphi$. In the next result, Theorem \ref{thm:main-local-testfunction}, we take a different perspective: We assume that $\rho_{\max}$ is not too large, which allows us to get good control of the $L^\infty$ norm of $u_t$. In the duality $\langle u_t, \varphi\rangle$ we can therefore estimate $\varphi$ in $L^1$ based Besov spaces, which allows us to consider spatially localized test functions. This seems to be a new observation in this context.

The last main result shows that the condition $\rho_{\min}>0$ is essential for superpolynomial convergence rates.
We give an example with $\rho_{\min}=0$ and take $f=\sqrt{\cdot}$,  on the one dimensional torus, and show that the SPDE~\eqref{eq:approx_DK} only has a limited approximation rate. The reason is that the negative part of the solution is relatively large, at least for a short period of time before the heat flow starts to dominate the fluctuations. More precisely, we prove a lower bound for the approximation of the variance:
\begin{equation}\label{eq:lower-bound-intro}
\left|
\E\left[\left(\langle u_t,\varphi\rangle
- \langle p_t \ast u_0,\varphi\rangle\right)^2\right]
-
\E\left[\left(\langle \mu_t,\varphi\rangle
- \langle p_t \ast \mu_0,\varphi\rangle\right)^2\right]
\right|
\gtrsim
\frac{e^{-4\pi^2 t}}{N^{3/2}}
\frac{\varepsilon^{5/2}}{|\log \varepsilon|^{3}}
\end{equation}
as $N\to\infty$, whenever $\varepsilon=\varepsilon(N)\gtrsim N^{-a}$ for some $a>0$. Here  $p_t$ denotes the heat kernel generated by $\tfrac12 \Delta$ and we note that $\E[u_t] = p_t \ast u_0$ and, in the sense of integrating against test functions, $\E[\mu_t] = p_t \ast \mu_0$.

Note that \eqref{eq:lower-bound-intro} does not match the upper bound
$O(N^{-3/2}\varepsilon^{-d/2})$  from~\eqref{eq:bound-low-density}, but it shows that the weak error cannot decay superpolynomially unless $\rho_{\min}>0$. In the simple example of independent Brownian particles this problem can only arise if the initial condition is too small (or even zero) in some regions, basically because the heat equation satisfies a maximum principle and the solution at later times is bounded from below by $\rho_{\min}$. But for interacting particles or even just particles in a potential the Fokker--Planck equation may violate the maximum principle and the dynamics themselves may produce low density regions~\cite{Wehlitz2025}, and therefore it is crucial to understand approximations in the low density case. %

The paper is organised as follows. Section \ref{sec:main} introduces the setting and notation, states the well-posedness result for $u$ and proves the main results. Section \ref{sec:lower_bound} establishes the lower bound on the negative part of $u$ in one space dimension for densities that are not strictly positive. In Section \ref{sec:negative} we provide upper bounds for the negative part of $u$. Section \ref{sec:initial_approx} gives the error in the initial approximation of particles by a density. 
Section \ref{sec:HJB} treats the duality and the Cole--Hopf solution for the complex-valued Hamilton--Jacobi--Bellman equation.
The appendices contain the well-posedness proof and auxiliary results. Section~\ref{sec:numerics} presents numerical experiments.


\paragraph{Notation} 
Throughout, let
\[
    B^\alpha_{p,q} := B^\alpha_{p,q}(\T^d),\qquad \alpha\in \R,p,q\in [1,\infty],
\]
denote the closure of $C^\infty(\T^d)$ with respect to the Hölder-Besov norm $\|\cdot \|_{B^\alpha_{p,q}}$, see \cite{Bahouri2011} for Besov spaces on $\R^d$, and \cite{Schmeisser1987, Gubinelli2015EBP} for Besov spaces on $\T^d$. For $p=q=\infty$ we write
\[
    C^\alpha:=B^\alpha_{\infty,\infty}.
\]
For complex-valued functions $\varphi$ we write
\[
    \varphi \in B^{\alpha}_{p,q}(\T^d, \C),\qquad \text{or} \qquad \varphi \in C^\alpha(\T^d, \C),
\]
if both $\operatorname{Re}(\varphi)$ and $\operatorname{Im}(\varphi)$ are in the corresponding function space, and in that case
\[
    \|\varphi\|_{B^\alpha_{p,q}} := \|\operatorname{Re}(\varphi)\|_{B^\alpha_{p,q}} + \|\operatorname{Im}(\varphi)\|_{B^\alpha_{p,q}}.
\]

\section{Main results}\label{sec:main}

Let $N \in \N$ and let $\mu_t = \frac{1}{N} \sum_{i = 1}^N \delta_{B^i_t}$, $t \geq 0$, be the empirical measure
of $N$ independent Brownian motions $(B^i)_{i = 1,\dots,N}$. By \cite{Dean1996, Konarovskyi2019} the process $\mu$ is a martingale solution of the Dean--Kawasaki equation
\begin{equation*}
    \mathd \mu_t = \frac12 \Delta \mu_t \mathd t + \frac{1}{\sqrt N} \nabla \cdot (\sqrt{\mu_t} \mathd W_t),
\end{equation*}
where $W$ is a cylindrical vector-valued Wiener process with identity covariance on  $L^2 (\mathbb{T}^d; \mathbb{R}^d)$. We assume that the initial conditions $(B^i_0)_{i=1,\dots, N}$ are i.i.d. with common density $\rho_0 \in L^\infty(\mathbb T^d)$. We write 
\begin{equation*}
    \rho_{\min} := \operatorname{essinf}_{x \in \mathbb T^d} \rho_0(x) \geq 0,\qquad \rho_{\max}:=\|\rho_0\|_{L^\infty}.
\end{equation*}
Our goal is to model the statistical behavior of the process $\mu$ with a continuum model, for which we choose an SPDE with spatially smooth trajectories. Concretely, we consider the regularized Dean--Kawasaki equation
\begin{equation}\label{eq:reg_DK}
     \mathd u_t = \frac{1}{2} \Delta u_t \mathd t + \frac{1}{\sqrt{N}} \nabla
   \cdot \Bigl(K_{\varepsilon} \ast \left(f (u_t^+) \mathd W_t\right) \Bigr)
\end{equation}
on $\mathbb{R}_+ \times \mathbb{T}^d$, for the unit torus $\T = \R/\Z$, where $u^+ := \max \{ u,0 \}$ is the positive part of
$u$, the Wiener process $W$ is as above, the mollifier $K_\varepsilon$ is given by
\[ 
K_{\varepsilon} =\mathcal{F}^{- 1} (\chi (\varepsilon\cdot)),
\]
with an even function $\chi \in C^{\infty}_c(\R^d)$ satisfying $\chi \equiv 1$ in a
neighborhood of $0$, and $f:\R_+\to\R$ is a continuous function with $f(x) \lesssim \sqrt{x}$.

\begin{definition}[Mild and weak solution]\label{def:weak_sol}
Let $u_0$ be an $L^2(\mathbb T^d)$-valued random variable.
An $(\mathcal F_t)_{t\geq 0}$-adapted and continuous $L^2(\mathbb T^d)$-valued process
$u=(u_t)_{t\geq 0}$, is called a mild solution to \eqref{eq:reg_DK} with initial condition $u_0$, if for every $t\geq 0$, almost surely,
\begin{equation}\label{eq:mild_formulation}
    u_t
    =
    p_t * u_0
    +
    \frac{1}{\sqrt N}\int_0^t
    p_{t-s} * \nabla\cdot\Bigl(K_\varepsilon * \bigl(f(u_s^+)\,\mathrm{d}W_s\bigr)\Bigr),
\end{equation}
where $p_t$ denotes the heat kernel on $\mathbb T^d$. It is called a weak solution if, 
for every test function
$\varphi\in C^\infty(\mathbb T^d)$  and every $t\geq 0$, almost surely,
\begin{equation}\label{eq:weak_formulation}
    \langle u_t,\varphi\rangle
    =
    \langle u_0,\varphi\rangle
    +
    \frac12 \int_0^t \langle u_s,\Delta\varphi\rangle\,\mathrm{d}s
    +
    \frac{1}{\sqrt N}\int_0^t
    \big\langle \nabla\cdot\bigl(K_\varepsilon * (f(u_s^+)\,\mathrm{d}W_s)\bigr),\varphi\big\rangle.
\end{equation}
\end{definition}

Using integration by parts, we can write \eqref{eq:weak_formulation} as 
\begin{equation}\label{eq:weak_formulation_ibp}
    \langle u_t,\varphi\rangle
    =
    \langle u_0,\varphi\rangle
    +
    \frac12 \int_0^t \langle u_s,\Delta\varphi\rangle\,\mathrm{d}s
    -
    \frac{1}{\sqrt N}\int_0^t
    \big\langle K_\varepsilon * (f(u_s^+)\,\mathrm{d}W_s),\nabla\varphi\big\rangle.
\end{equation}

\begin{proposition}[Well-posedness and weak existence for the regularized equation]\label{prop:wp}

\leavevmode\par
\begin{enumerate}

\item[i)] Assume that $f$ is globally Lipschitz. Then, for every $u_0\in L^2(\Omega;L^2(\mathbb T^d))$  that is  independent of $W$, there exists a pathwise unique  mild solution $u$ to the regularized Dean--Kawasaki equation \eqref{eq:reg_DK}. Moreover, $(u_t)_{t \in [0,T]}\in L^2\bigl(\Omega;C([0,T];L^2(\mathbb T^d))\bigr)$ for each $T>0$, and $u$ is also a weak solution. 

\item[ii)] Assume  that $f$ is continuous and has at most linear growth. 
Then, for every $u_0 \in L^2(\Omega;L^2(\mathbb T^d))$, there exists a
probabilistically weak mild solution to the regularized equation \eqref{eq:reg_DK}. More precisely, there exist a stochastic basis $(\tilde\Omega,\tilde{\mathcal F},
(\tilde{\mathcal F}_t)_{t\ge0},\tilde{\mathbb P})$ with a cylindrical Wiener process $\tilde W$ on $L^2(\mathbb{T}^d;\mathbb{R}^d)$, and an
$(\tilde{\mathcal F}_t)_{t\ge0}$-adapted process $u$ satisfying $(u_t)_{t \in [0,T]}\in L^2\bigl(\tilde\Omega;C([0,T];L^2(\mathbb T^d))\bigr)$ for each $T>0$, 
such that $u$ is a mild and a weak solution driven by $\tilde{W}$.
\end{enumerate}
\end{proposition}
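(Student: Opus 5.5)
The plan is to treat \eqref{eq:reg_DK} as an SPDE with additive-type structure in which the only nonlinearity sits in the noise coefficient. The key point is that the noise operator is smoothing. For $g\in L^2(\mathbb T^d)$, define $G(g):L^2(\mathbb T^d;\R^d)\to L^2(\mathbb T^d)$ by
\[
G(g)h:=\nabla\cdot\bigl(K_\varepsilon\ast(f(g^+)h)\bigr).
\]
Since $\chi$ has compact support, $K_\varepsilon\ast$ projects onto finitely many Fourier modes $|k|\lesssim\varepsilon^{-1}$, with $M_\varepsilon\sim\varepsilon^{-d}$ modes. Hence $G(g)$ is Hilbert--Schmidt. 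Concretely,
\[
\|G(g)\|_{HS}^2=\sum_k |2\pi k|^2\chi(\varepsilon k)^2\,\|f(g^+)\|_{L^2}^2\lesssim \varepsilon^{-d-2}\|f(g^+)\|_{L^2}^2,
\]
because for each Fourier mode $e_k$ we have $\sum_j|\langle f(g^+)h_j,e_k\rangle|^2=\|f(g^+)\|_{L^2}^2$ by Parseval in $h$. The same computation gives
\[
\|G(g)-G(\tilde g)\|_{HS}\lesssim \varepsilon^{-d/2-1}\|f(g^+)-f(\tilde g^+)\|_{L^2}.
\]

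\emph{Part i).} If $f$ is Lipschitz, then $g\mapsto f(g^+)$ is Lipschitz on $L^2$, so $G$ is globally Lipschitz from $L^2$ into the Hilbert--Schmidt operators. The standard Da Prato--Zabczyk fixed point argument for mild solutions then applies, with the heat semigroup a contraction on $L^2$. I would run a Banach fixed point on the space of adapted processes in $L^2(\Omega;C([0,T];L^2))$. Time continuity and the sup-in-time bound of the stochastic convolution come from the factorization method or from a maximal inequality for contraction semigroups; both are available because $G$ is Hilbert--Schmidt and bounded in terms of $\|g\|_{L^2}$. Pathwise uniqueness follows from Gronwall applied to $\E\sup_{s\le t}\|u_s-v_s\|^2$. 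Equivalence of mild and weak solutions is the standard result for semigroups with Hilbert--Schmidt noise, again because $G(u_s)$ is Hilbert--Schmidt with square-integrable norm. Testing against $\varphi\in C^\infty$ and using integration by parts gives \eqref{eq:weak_formulation_ibp}.

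\emph{Part ii).} For continuous $f$ with linear growth, I would approximate $f$ by Lipschitz $f_n$ with uniform linear growth, converging locally uniformly (for example by mollification), and use part i) to obtain solutions $u^n$. The main work is to establish tightness. The Hilbert--Schmidt bound gives uniform moment estimates $\E\sup_{t\le T}\|u^n_t\|^p\le C$ via BDG and Gronwall. Since the noise lives on finitely many modes, the stochastic convolution is uniformly bounded in $C^\gamma([0,T];H^s)$ for every $s$ by Kolmogorov. The deterministic part $p_t\ast u_0$ is a fixed continuous $L^2$-valued path. Writing $u^n=p_\cdot\ast u_0+Z^n$, tightness of $Z^n$ in $C([0,T];L^2)$ follows from Arzelà--Ascoli, using the compact embedding of $H^s$ into $L^2$.

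Skorokhod's theorem then yields a.s. convergence on a new probability space, with new Wiener processes $\tilde W^n\to\tilde W$. I would identify the limit by passing to the limit in the martingale problem, or directly in the stochastic integrals: $f_n((\tilde u^n)^+)\to f(\tilde u^+)$ in $L^2$ by dominated convergence and linear growth, and convergence of stochastic integrals with converging integrands and Wiener processes follows from a standard lemma (Debussche--Glatt-Holtz--Temam type). This identification step is the main obstacle, in particular handling the nonlinearity $f$ when it is merely continuous. I would argue along an a.s. convergent subsequence, apply Vitali with the uniform moments, and then verify the mild and weak formulations for the limit process.
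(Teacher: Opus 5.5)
Your part i) follows the paper. The paper computes exactly your Parseval identity, $\|B_\varepsilon(u)-B_\varepsilon(v)\|_{L_2(U,H)}^2=\frac1N\|\nabla K_\varepsilon\|_{L^2}^2\|f(u^+)-f(v^+)\|_{L^2}^2$. From it the paper deduces global Lipschitz continuity and linear growth, then invokes the Da Prato--Zabczyk well-posedness theorem and the weak--mild equivalence.

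Part ii) takes a genuinely different route. The paper never approximates $f$. It only shows that $B_\varepsilon\colon L^2(\T^d)\to L_2(U,H)$ is continuous (convergence in measure of $f(u_n^+)$, uniform integrability of $|u_n|^2$, Vitali) and has linear growth. It then cites the abstract existence theorem for martingale solutions with continuous coefficients (Theorem~8.1 in Da Prato--Zabczyk). A remark adds that this result, stated for deterministic initial data, extends to random $u_0$.

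You instead re-prove that theorem by hand in this concrete setting: Lipschitz approximations $f_n$ plus part i), uniform moments, tightness, Skorokhod, and identification through a Debussche--Glatt-Holtz--Temam type lemma. Your Nemytskii convergence $f_n((\tilde u^n)^+)\to f(\tilde u^+)$ in $L^2$ is the same Vitali argument the paper uses for continuity of $B_\varepsilon$. The paper's route is much shorter but rests on a black box. Yours is self-contained and treats random $u_0$ inside the same compactness argument. It also shows why compactness is cheap here: $u-p_\cdot\ast u_0$ takes values in the fixed finite-dimensional span of the modes $e_k$ with $\chi(\varepsilon k)\neq 0$, which the heat flow leaves invariant. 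So Arzel\`a--Ascoli in that space suffices, and the embedding $H^s\hookrightarrow L^2$ is not even needed.

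Two details need fixing when you write it out.

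First, mollifying a continuous function of linear growth gives in general only a locally Lipschitz function, since its derivative may grow linearly. So truncate before mollifying. Alternatively, use the inf-convolution $f_n(x)=\inf_{y\ge 0}\{f(y)+n|x-y|\}$. For large $n$ this is $n$-Lipschitz, satisfies the same linear growth bound uniformly in $n$, and converges to $f$ locally uniformly by Dini.

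Second, the $p$-th moment bounds with $p>2$ that you need for Kolmogorov require $u_0\in L^p(\Omega;L^2)$. With only $u_0\in L^2(\Omega;L^2)$, localize on the $\mathcal F_0$-measurable events $\{\|u_0\|_{L^2}\le R\}$; tightness only needs bounds off sets of small probability. This plays the role of the approximation of $u_0$ mentioned in the paper's remark.
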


The proof of the proposition can be found in Appendix~\ref{Appendix_wp}. The second part of the statement applies to $f(x) = \sqrt{x}$.

Although the regularized equation does not preserve positivity and
therefore the $L^1$ norm is not conserved in general, the total mass in the
signed sense is still conserved. Indeed, applying \eqref{eq:weak_formulation_ibp} to the
constant function $\varphi\equiv 1$ gives
\[
    \int_{\mathbb{T}^d} u_t(x)\,\mathd x
    =
    \int_{\mathbb{T}^d} u_0(x)\,\mathd x, \qquad \text{ for all } t\geq 0.
\]
We decompose \eqref{eq:mild_formulation} into expectation and fluctuations,
\begin{equation}\label{eq:v-def}
    u_t = p_t \ast u_0 + v_t,
\end{equation}
where the fluctuations are given by
\begin{equation}\label{def:v}
    v_t := \frac{1}{\sqrt N}\int_0^t
    p_{t-s} * \nabla\cdot\Bigl(K_\varepsilon * \bigl(f(u_s^+)\,\mathrm{d}W_s\bigr)\Bigr).
\end{equation}
The following parameters govern the size of the fluctuations $v$:
\begin{equation}
    \delta \assign \varepsilon^{- d / 2} N^{- 1 / 2}, \qquad \kappa \assign
     \delta \sqrt{1\vee d \log \frac{1}{\varepsilon}} .
\end{equation}
More precisely, if $u_0 \equiv 1$, for small times the standard deviation of the stochastic integral in~\eqref{def:v} is of order $\delta \sqrt{t}$ for each $x \in \T^d$,  and the uniform norm in $x$ is of order $\kappa \sqrt{t}$.

\begin{assumption}[Fluctuation scaling]\label{ass:kappa}
    Throughout we make the standing assumption that \(N\) and \(\varepsilon\) are chosen such that $\delta \leq \kappa \leq 1$.
\end{assumption}
This condition is equivalent to 
\begin{equation}\label{kappa_condition}
    N \geq \varepsilon^{-d} \big(1\vee d\log \frac{1}{\varepsilon}\big) ,
\end{equation}
so it encodes that the number of particles is sufficiently large relative to the mollification scale $\varepsilon$. This will guarantee that the fluctuations $v_t$ do not dominate the expectation $p_t \ast u_0$, and it is crucial to obtain a control of the negative part of $u$. Indeed, $p_t \ast u_0$ preserves pointwise the lower bound of the initial condition, while $v_t$ may take negative values. So, for $u_t$ to remain positive we need $v_t$ to be sufficiently small compared to $p_t \ast u_0$. Note also that for $N \lesssim \varepsilon^{-d}$ the complexity of the SPDE model is higher than that of the particle system, because the SPDE has $O(\varepsilon^{-d})$ Fourier modes and there are $N$ particles.

Our first main result controls the difference of the moment generating functions of $u_t$ and $\mu_t$, both extended to a complex neighborhood of the origin. While the dynamics of $u$ approximate the dynamics of $\mu$ very well, we also need to approximate the particle initial condition $\mu_0$ to high order by a density $u_0$. A natural idea would be to take $K_\varepsilon\ast \mu_0$, but this will take negative values and  not be a probability density. Therefore, we start $u$ from $\rho_0 + K_\varepsilon\ast (\mu_0 - \rho_0)$, corrected if this gets too close to $0$.

\begin{theorem}\label{thm:main-nonlocal-testfunction}
  Assume that $f \in C(\R_+,\R)$ is such that $f (x) \lesssim \sqrt{x}$. Let $\nu := K_\varepsilon *(\mu_0-\rho_0)$ and then
  \[
    \bar{u} := \begin{cases}
        \rho_0 + \nu,
        & \|\nu\|_\infty \leq \frac{\rho_{\min}}{2},\\
        \rho_0,
        & \text{otherwise}.
        \end{cases}
  \]
  Let $u$ be a mild and weak solution to~\eqref{eq:reg_DK}  with initial condition $u_0$ such that $\operatorname{law}(u_0) = \operatorname{law}(\bar u)$. Then there exist $c > 0$ and $\gamma \in (0,\pi/4]$ such that for any $\alpha > 0$ and any $\varphi \in C^{\alpha}(\T^d, \C)$ with $\nabla \varphi \in L^\infty$ and $\kappa^2 \| \varphi\|_\infty \leq \gamma$
  \begin{align} \label{eq:main}
    &| \mathbb{E} [e^{\langle u_t, \varphi \rangle}] -\mathbb{E} [e^{\langle
    \mu_t, \varphi \rangle}] |  \lesssim_{\alpha} \frac{e^{C\|\varphi\|_\infty}}{N^{1/2}}\left(\varepsilon^\alpha \|\varphi\|_{C^\alpha}
        + \|\varphi\|_{L^\infty} 
        \exp\left(-c \frac{\rho_{\min}^2}{\kappa^2\rho_{\max}}\right)\right)\\ \nonumber
    & \hspace{10pt}+ \frac{e^{C (\| \varphi \|_{\infty} + \kappa^2 \rho_{\max} \| \varphi \|_{\infty}^2 )}}{N}
    \left(\varepsilon^{\alpha} \|\nabla \varphi\|_\infty \| \varphi \|_{C^{\alpha}} + (\| \operatorname{id} - f^2 \|_{\infty} + \delta \rho_{\max}^{1/2} \exp\left(-c \frac{\rho_{\min}^2}{\kappa^2\rho_{\max}}\right)) \| \nabla \varphi \|_{\infty}^2\right). 
  \end{align}
\end{theorem}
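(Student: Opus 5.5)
The plan is a duality argument built on the infinite-dimensional Kolmogorov backward equation of the particle system. For the terminal functional $\mu\mapsto e^{\langle\mu,\varphi\rangle}$ the Cole--Hopf transform gives the explicit solution
\[
U(s,\mu)=\exp\bigl(N\langle \mu,\log(p_{t-s}\ast e^{\varphi/N})\rangle\bigr)=e^{\langle\mu,\psi_s\rangle},
\]
where $\psi_s:=N\log(p_{t-s}\ast e^{\varphi/N})$ solves the complex Hamilton--Jacobi--Bellman equation
\[
\partial_s\psi+\tfrac12\Delta\psi+\tfrac1{2N}|\nabla\psi|^2=0,\qquad \psi_t=\varphi .
\]
The smallness condition $\kappa^2\|\varphi\|_\infty\le\gamma$ with $\gamma\le\pi/4$ is exactly what keeps $p_{t-s}\ast e^{\varphi/N}$ away from zero, so that the complex logarithm is well defined. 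I would therefore first establish, in the spirit of Section~\ref{sec:HJB}, the following bounds, uniform in $s\le t$:
\begin{itemize}
\item $\|\psi_s\|_\infty\lesssim\|\varphi\|_\infty$;
\item $\|\psi_s\|_{C^\alpha}\lesssim\|\varphi\|_{C^\alpha}$ and $\|\nabla\psi_s\|_\infty\lesssim\|\nabla\varphi\|_\infty$, each with the decay factor $e^{-c(t-s)}$ on the non-constant part, which follows from the spectral gap of $\tfrac12\Delta$ on $\T^d$.
\end{itemize}
Since the particle dynamics are exactly the martingale problem with generator matching $\psi$, we have $\E[e^{\langle\mu_t,\varphi\rangle}]=\E[e^{\langle\mu_0,\psi_0\rangle}]$. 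The error then splits into an initial term $\E[e^{\langle u_0,\psi_0\rangle}]-\E[e^{\langle\mu_0,\psi_0\rangle}]$ and a dynamic term $\E[e^{\langle u_t,\psi_t\rangle}]-\E[e^{\langle u_0,\psi_0\rangle}]$.

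\textbf{Initial term.} This is the content of Section~\ref{sec:initial_approx}. On the event $\|\nu\|_\infty\le\rho_{\min}/2$ we have $\langle\bar u,\psi_0\rangle-\langle\mu_0,\psi_0\rangle=\langle\mu_0-\rho_0,K_\varepsilon\ast\psi_0-\psi_0\rangle$. Because $\chi\equiv1$ near $0$, $K_\varepsilon$ reproduces low frequencies exactly, so $\|K_\varepsilon\ast\psi_0-\psi_0\|_\infty\lesssim\varepsilon^\alpha\|\psi_0\|_{C^\alpha}$, while $\mu_0-\rho_0$ is a centered empirical fluctuation of size $N^{-1/2}$. Combining the elementary bound $|e^a-e^b|\le|a-b|e^{|a|+|b|}$ with exponential moments of $\langle\mu_0,\cdot\rangle$ gives the term $N^{-1/2}e^{C\|\varphi\|_\infty}\varepsilon^\alpha\|\varphi\|_{C^\alpha}$. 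The complementary event has probability $\lesssim\exp(-c\rho_{\min}^2/(\kappa^2\rho_{\max}))$, by Bernstein's inequality for $\nu(x)$ at each point together with a union bound over an $\varepsilon$-grid (the source of the $\log\frac1\varepsilon$ in $\kappa$). On that event both sides are bounded by $e^{C\|\varphi\|_\infty}$. To obtain the extra factor $N^{-1/2}\|\varphi\|_\infty$ I would compare with the modified initial condition $\rho_0$ and use $\E[\langle\mu_0-\rho_0,\psi\rangle\mathbf 1_A]$-type estimates.

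\textbf{Dynamic term.} Apply It\^o's formula to $s\mapsto e^{\langle u_s,\psi_s\rangle}$ using the weak formulation \eqref{eq:weak_formulation_ibp}. The Laplacian terms cancel against the HJB drift, leaving
\[
\E\int_0^t e^{\langle u_s,\psi_s\rangle}\frac1{2N}\Bigl(\bigl\|K_\varepsilon\ast\nabla\psi_s\,f(u_s^+)\bigr\|_{L^2}^2-\langle u_s,|\nabla\psi_s|^2\rangle\Bigr)\,\mathd s,
\]
with $\|\cdot\|_{L^2}^2$ understood as the bilinear (not Hermitian) quadratic form, since $\psi$ is complex. I decompose the bracket into three pieces:
\begin{itemize}
\item $\langle f^2(u^+),(K_\varepsilon\ast\nabla\psi)^2-|\nabla\psi|^2\rangle$, which is $\lesssim\|u\|_{L^1}\varepsilon^\alpha\|\nabla\varphi\|_\infty\|\varphi\|_{C^\alpha}$;
\item $\langle f^2(u^+)-u^+,|\nabla\psi|^2\rangle$, controlled by $\|\mathrm{id}-f^2\|_\infty\|\nabla\varphi\|_\infty^2$;
\item $\langle u^-,|\nabla\psi|^2\rangle$, the negative-part error.
\end{itemize}
The time integral is made uniform in $t$ through the $e^{-c(t-s)}$ decay of $\nabla\psi_s$. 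The weight $e^{\langle u_s,\psi_s\rangle}$ is handled by exponential moment bounds for $v_s$. Writing $\psi_s=\bar\psi_s+\tilde\psi_s$ (constant plus mean-zero part), mass conservation makes $\langle u_s,\bar\psi_s\rangle$ deterministic. The remaining factor $\E|e^{\langle v_s,\tilde\psi_s\rangle}|$ is bounded by $e^{C\kappa^2\rho_{\max}\|\varphi\|_\infty^2}$ through a Gaussian-type martingale bound on the quadratic variation $N^{-1}\int\langle f^2(u^+),|K_\varepsilon\ast\nabla\tilde\psi|^2\rangle$; closing this requires a stopping-time or bootstrap argument in $\|u\|_{L^1}$.

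\textbf{Main obstacle.} The hard part is the negative part: I must show $\E[\|u_s^-\|_{L^1}^2]^{1/2}\lesssim\delta\rho_{\max}^{1/2}\exp(-c\rho_{\min}^2/(\kappa^2\rho_{\max}))$ uniformly in $s$, which is Section~\ref{sec:negative}. The approach is to note that $u_s^-\le|v_s|\mathbf 1_{\{|v_s|\ge p_s\ast u_0\ge\rho_{\min}/2\}}$. One then needs a sub-Gaussian tail for $\sup_x|v_s(x)|$ at scale $\kappa\rho_{\max}^{1/2}$, obtained from a BDG/Gaussian concentration bound on the stochastic convolution (whose variance is $\lesssim\delta^2\int\|f^2(u^+)\|_\infty$ with $\|u\|_\infty\lesssim\rho_{\max}$ up to fluctuations) combined with a chaining argument over the $\varepsilon^{-d}$ effective modes. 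Cauchy--Schwarz then splits the size factor $\delta\rho_{\max}^{1/2}$ from the small probability. The delicate point is circularity: the variance bound requires control of $u^+$, which is itself being estimated. I would try to break this with a stopping time at the first exit of $\|u\|_\infty$ from $2\rho_{\max}$, proving an exponential martingale inequality up to that time.
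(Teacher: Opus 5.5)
Your architecture matches the paper's: Cole--Hopf duality under the $\pi/4$ condition, the split into initial and dynamic error, and It\^o's formula for $e^{\langle u_s,\psi_s\rangle}$ with the same three error terms (your regrouping is harmless). The negative part is likewise controlled by $u_s^-\le |v_s|\,\mathbf{1}_{\{\|v_s\|_\infty\ge\rho_{\min}/2\}}$ plus Cauchy--Schwarz. Your grid/union bound for $\mathbb{P}(\|\nu\|_\infty>\rho_{\min}/2)$ is an acceptable substitute for the paper's $L^p\to L^\infty$ Bernstein step.

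The gap is the estimate everything else rests on: moment and tail bounds for the stochastic convolution $v_s$ that are uniform in $s\ge0$. Both the weight $\E|e^{\lambda\langle u_s,\psi_s\rangle}|$, $\lambda\in\{1,2\}$, and $\E[\|u_s^-\|_{L^1}^2]^{1/2}$ need them. You leave them open, and the fix you suggest does not work. Stopping at $\tau=\inf\{s:\|u_s\|_\infty\ge 2\rho_{\max}\}$ forces you to control $\mathbb{P}(\tau\le s)$. That is a running-supremum probability over the whole window $[0,s]$, and any bound for it deteriorates with $s$, so you lose exactly the uniformity in $t$ the theorem asserts. Moreover, on $\{\tau\le s\}$ the Cauchy--Schwarz step still requires unstopped moments of $v_s$, which is the circular quantity itself. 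The same circularity sits in your exponential-martingale bound for $\langle v_s,\tilde\psi_s\rangle$, whose quadratic variation involves $\|u_r\|_{L^1}$.

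The missing idea is that the square-root growth of the noise coefficient makes fixed-time moment bounds self-improving, so no stopping is needed. Freeze $t$ and apply Burkholder--Davis--Gundy to $r\mapsto N^{-1/2}\int_0^r\int_{\T^d} q^\varepsilon(t-s,x-y)f(u_s^+(y))\cdot\mathd W(s,y)$. Use $f(u^+)^2\lesssim|u|\le p_s\ast u_0+|v_s|$, then Minkowski (here $p\ge2$) and Jensen in the measure $|q^\varepsilon|^2\,\mathd y\,\mathd s$. With $M^{(p)}_t:=\sup_{s\le t}\|v_s\|_{L^p(\Omega\times\T^d)}$ and $M_t^{(p/2)}\le M_t^{(p)}$ this gives
\[
M^{(p)}_t\le C\sqrt{p/N}\,\|q^\varepsilon\|_{L^2([0,t]\times\T^d)}\Bigl(\|u_0\|_{L^{p/2}}^{1/2}+\bigl(M^{(p)}_t\bigr)^{1/2}\Bigr).
\]
This closes by Young's inequality because it is sublinear in $M^{(p)}_t$. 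It is a supremum of moments, not a moment of the supremum, so nothing grows with $t$.

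The energy estimate for the heat equation started at $K_\varepsilon$ gives $\|q^\varepsilon\|_{L^2([0,\infty)\times\T^d)}\lesssim\|K_\varepsilon\|_{L^2}\lesssim\varepsilon^{-d/2}$. Hence $\sup_t\E[\|v_t\|_{L^p}^p]^{1/p}\lesssim\sqrt p\,\delta\|u_0\|_{L^{p/2}}^{1/2}+p\delta^2$ with constants independent of $p$. Bernstein's inequality for the band-limited $v_t$ at $p\ge d\log\frac1\varepsilon$ upgrades this to $\E[\|v_t\|_\infty^p]^{1/p}\lesssim\sqrt p\,\kappa\|u_0\|_\infty^{1/2}+p\kappa^2$, replacing your chaining. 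The moment-to-tail lemma then yields the tail bound and $\E[e^{\lambda\|v_t\|_\infty}]\lesssim e^{C\lambda^2\kappa^2\|u_0\|_\infty}$ for $\lambda\le c/\kappa^2$. Via $\|u_s\|_{L^1}\le\|u_0\|_{L^1}+\|v_s\|_\infty$, this single estimate controls the weight, so the constant/mean-zero splitting of $\psi_s$ is unnecessary. Combined with your indicator bound, it also controls the negative part.

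Two smaller points. The first error term needs the smoothing estimate $\|\nabla\psi_s\|_{C^\alpha}\lesssim(t-s)^{-1/2}e^{-c(t-s)}\|\varphi\|_{C^\alpha}$; the bound $\|\psi_s\|_{C^\alpha}\lesssim\|\varphi\|_{C^\alpha}$ alone only gives $\varepsilon^{\alpha-1}$. And for complex $\psi$ the HJB nonlinearity must be $(\nabla\psi)^2=\nabla\psi\cdot\nabla\psi$, not $|\nabla\psi|^2$.
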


The first term on the right hand side corresponds to the approximation error made at the initial condition, while the second one captures the weak error from the dynamics. For $\rho_{\min}=0$, the weak error from the initial condition can be improved, see Remark~\ref{rmk:zero-IC} following  the proof.

\begin{proof}
  By definition of $C^{\alpha}$ as the closure of smooth functions in the $B^{\alpha}_{\infty,\infty}$ norm, we may assume that $\varphi \in C^\infty(\T^d,\C)$. For real-valued twice continuously differentiable $\varphi$ we know from~\cite{Konarovskyi2019} the duality $\E[e^{\langle\mu_t, \varphi \rangle}] = \E[e^{\langle
    \mu_0, \varphi_0 \rangle}]$, where $\varphi$ solves the Hamilton--Jacobi--Bellman equation $\partial_s \varphi + \frac{1}{2} \Delta \varphi + \frac{1}{2 N} (
  \nabla \varphi )^2 = 0$ on $[0, t] \times \T^d$, with terminal condition $\varphi_t = \varphi$. We discuss in Section~\ref{sec:HJB} the extension of this duality result to complex-valued $\varphi$ with $\|\tfrac1N \varphi\|_\infty \leq \tfrac\pi4$, which is satisfied since $\tfrac1N \|\varphi\|_\infty \leq \kappa^2 \|\varphi\|_\infty \leq \gamma \leq \tfrac\pi4$. We also discuss the solution of the complex-valued Hamilton--Jacobi--Bellman equation by the Cole--Hopf transform $\varphi_s = N \log(p_{t-s}\ast \exp(\tfrac1N \varphi))$. Lemma~\ref{lem:CH-regularity} collects regularity results for $(\varphi_s)_{s \in [0,t]}$ that are uniform in $s$ and $t$; note that the lemma is formulated for the equation in forward time, so that here we have to reverse time.
  
   By the duality, 
   we have
  \begin{equation}\label{eq:main-pr1}
    | \mathbb{E} [e^{\langle u_t, \varphi \rangle}] -\mathbb{E} [e^{\langle
    \mu_t, \varphi \rangle}] | \leq |\mathbb E[e^{\langle u_t,\varphi\rangle}] - \mathbb E[e^{\langle u_0, \varphi_0\rangle}]| + |\mathbb E[e^{\langle u_0,\varphi_0\rangle}] - \mathbb E[e^{\langle \mu_0,\varphi_0\rangle}]|.
  \end{equation}
  For the second term, which corresponds to the error from the initial approximation, we use that $\operatorname{law}(u_0)=\operatorname{law}(\bar u)$ to replace $u_0$ in the expectation by $\bar u$ and then we apply Proposition~\ref{prop:IC} 
  to obtain
  \begin{align*}
    |\mathbb E[e^{\langle u_0,\varphi_0\rangle}] - \mathbb E[e^{\langle \mu_0,\varphi_0\rangle}]| & \leq \mathbb E[e^{C\|\varphi\|_\infty}|\langle \bar u - \mu_0, \varphi_0\rangle|] \\
    & \lesssim e^{C \|\varphi\|_\infty} \left(N^{-1/2}\varepsilon^\alpha \|\varphi\|_{C^{\alpha}}
        + N^{-1/2}\|\varphi\|_{L^\infty}
        \exp\left(-c \frac{\rho_{\min}^2}{\kappa^2\rho_{\max}}\right)\right),
  \end{align*}
  where we applied~\eqref{eq:CH-Lebesgue} and~\eqref{eq:CH-Besov} from Lemma~\ref{lem:CH-regularity} in the appendix to bound $\|\varphi_0\|_\infty \leq C \|\varphi\|_\infty$ and $\|\varphi_0\|_{C^\alpha} \lesssim \|\varphi\|_{C^{\alpha}}$, both uniformly in the terminal time $t$.

  To bound the first term in~\eqref{eq:main-pr1}, we use that $\langle u_t, \varphi\rangle = \langle u_t, \operatorname{Re}(\varphi)\rangle + i \langle u_t, \operatorname{Im}(\varphi)\rangle$, so we can apply the weak formulation  \eqref{eq:weak_formulation_ibp} to real and imaginary part of $\varphi$ separately.
  Therefore
  \begin{align} \nonumber
    | \mathbb{E} [e^{\langle u_t, \varphi \rangle}] -\mathbb{E} [e^{\langle
    u_0, \varphi_0 \rangle}] | & \leq \int_0^t \left| \mathbb{E} \left[
    e^{\langle u_s, \varphi_s \rangle} \left( \langle u_s, \partial_s
    \varphi_s \rangle + \left\langle u_s, \frac{1}{2} \Delta \varphi_s
    \right\rangle \right.\right.\right.\\ \nonumber
    &\hspace{100pt} \left.\left.\left. + \frac{1}{2 N} \langle ( K_{\varepsilon} \ast \nabla
    \varphi_s )^2, f (u^+_s)^2 \rangle \right) \right] \right| \mathd s\\ \nonumber
    & \leq \int_0^t \left| \mathbb{E} \left[ e^{\langle u_s, \varphi_s
    \rangle} \left( \frac{1}{2 N} \langle ( \nabla \varphi_s )^2 - (
    K_{\varepsilon} \ast \nabla \varphi_s )^2, u_s \rangle \right) \right]
    \right| \mathd s\\ \nonumber
    & \quad + \int_0^t \left| \mathbb{E} \left[ e^{\langle u_s, \varphi_s
    \rangle} \left( \frac{1}{2 N} \langle ( K_{\varepsilon} \ast \nabla
    \varphi_s )^2, u_s^+ - f (u^+_s)^2 \rangle \right) \right] \right| \mathd
    s\\ \nonumber
    & \quad + \int_0^t \left| \mathbb{E} \left[ e^{\langle u_s, \varphi_s
    \rangle} \left( \frac{1}{2 N} \langle ( K_{\varepsilon} \ast \nabla
    \varphi_s )^2, u_s^- \rangle \right) \right] \right| \mathd s\\ \nonumber
    & \lesssim \frac{1}{N} \int_0^t \mathbb{E} [|e^{2 \langle u_s, \varphi_s
    \rangle}|]^{1 / 2} \| ( \nabla \varphi_s )^2 - ( K_{\varepsilon} \ast
    \nabla \varphi_s )^2 \|_{\infty} \mathbb{E} [\| u_s \|_{L^1}^2]^{1 / 2}
    \mathd s\\ \nonumber
    & \quad + \frac{1}{N} \int_0^t \mathbb{E} [|e^{\langle u_s, \varphi_s
    \rangle}|] \| \nabla \varphi_s \|_{\infty}^2 \| \operatorname{id} - f^2 \|_{\infty}
    \mathd s\\ 
    & \quad + \frac{1}{N} \int_0^t \mathbb{E} [|e^{2 \langle u_s, \varphi_s
    \rangle}|]^{1 / 2} \| \nabla \varphi_s \|_{\infty}^2 \mathbb{E} [\| u_s^-
    \|_{L^1}^2]^{1 / 2} \mathd s. \label{eq:main-pr2-s2}
  \end{align}
  We start by estimating the factor $\E[|e^{\lambda \langle u_s,\varphi_s\rangle}|]$, for $\lambda \in \{1,2\}$, which appears in each of the three terms in~\eqref{eq:main-pr2-s2}. From the bound~\eqref{eq:CH-Lebesgue} for $\varphi$ and from the bound on the exponential moments of $\|u\|_{L^1}$ from Corollary~\ref{cor:exp-moments-u} in Section~\ref{sec:negative} below, we know that for $\lambda \in \{ 1, 2 \}$ and for $\kappa^2\|\varphi\|_\infty \leq \gamma$ with $\gamma \in (0,\tfrac\pi4]$ sufficiently small, the following bound holds uniformly in time, with a constant $C>0$ that may change in each step:
  \begin{equation*}
    \mathbb{E} [|e^{\lambda \langle u_s, \varphi_s \rangle}|] \leq
     \mathbb{E} \left[ e^{C\lambda  \| \varphi \|_{\infty} \| u_s \|_{L^1}}
     \right] \lesssim \exp (C\lambda \| \varphi \|_{\infty}
     +C\lambda^2 \| \varphi \|_{\infty}^2 \kappa^2 \rho_{\max})  \leq e^{C (\| \varphi \|_{\infty} + \| \varphi \|_{\infty}^2 \kappa^2\rho_{\max})},
  \end{equation*}
  and therefore~\eqref{eq:main-pr2-s2} simplifies to
  \begin{align} \nonumber
      & | \mathbb{E} [e^{\langle u_t, \varphi \rangle}] -\mathbb{E} [e^{\langle
    u_0, \varphi_0 \rangle}] |\\ \label{eq:main-pr3-s2}
      & \lesssim \frac{e^{C (\| \varphi \|_{\infty} + \| \varphi \|_{\infty}^2 \kappa^2 \rho_{\max})}}{N} \left(\int_0^t \| (\nabla \varphi_s)^2 - ( K_{\varepsilon} \ast
    \nabla \varphi_s )^2 \|_{\infty} \mathbb{E} [\| u_s \|_{L^1}^2]^{1 / 2}
    \mathd s\right. \\ \label{eq:main-pr4-s2}
      & \hspace{130pt} +  \int_0^t \| \nabla \varphi_s \|_{\infty}^2 \| \operatorname{id} - f^2 \|_{\infty} \mathd s \\ \label{eq:main-pr5-s2}
      & \hspace{130pt} + \left.\int_0^t \| \nabla \varphi_s \|_{\infty}^2 \mathbb{E} [\| u_s^- \|_{L^1}^2]^{1 / 2} \mathd s\right).
  \end{align}
  
  We proceed by estimating each of the three terms on the right hand side separately. The first term corresponds to the approximation error made by convolution with $K_\varepsilon$, the second one corresponds to the error made by replacing the square root by $f$ and it vanishes for $f(x) = \sqrt{x}$, and the third one corresponds to the error from negative values of $u$, which cannot be ruled out because our approximation violates the maximum principle.
  
  To estimate~\eqref{eq:main-pr3-s2}, we bound for $\alpha > 0$
  \begin{align*}
    \| ( \nabla \varphi_s )^2 - ( K_{\varepsilon} \ast \nabla \varphi_s )^2
    \|_{\infty} & = \| (\nabla \varphi_s + \nabla K_{\varepsilon} \ast
    \varphi_s) \cdot (\nabla \varphi_s - \nabla K_{\varepsilon} \ast
    \varphi_s) \|_{\infty}\\
    & \lesssim \| \nabla \varphi_s \|_{\infty} \varepsilon^{\alpha} \|\nabla
    \varphi_s \|_{C^{\alpha}} \lesssim e^{-2c(t-s)}(t-s)^{-1/2} \| \nabla \varphi \|_{\infty} \varepsilon^{\alpha} \|
    \varphi \|_{C^{\alpha}},
  \end{align*}  
  where we applied Lemma~\ref{lem:K-eps-error} to control the approximation error from $K_\varepsilon$, and Lemma~\ref{lem:CH-regularity} to control the regularity of $\varphi_s$ uniformly in time, with exponential decay due to the spectral gap of the heat operator on $\T^d$. For the factor $\E[\| u_s \|_{L^1}^2]^{1 / 2}$ we  split $u_s = p_s\ast u_0 + v_s$, where $v_s$ is defined by the equality, and we apply Lemma~\ref{lem:v-Lebesgue-moments} to bound $v_s$:
  \[
     \sup_{s \geq 0} \mathbb{E} [\| u_s \|_{L^1}^2]^{1 / 2} \leq \sup_{s \geq 0} \mathbb{E} [\| p_s\ast u_0 \|_{L^1}^2]^{1 / 2} + \sup_{s \geq 0} \mathbb{E} [\| v_s \|_{L^2}^2]^{1 / 2} \lesssim 1 + \delta \E[\|u_0\|_{L^1}^{1/2}] \lesssim 1,
  \]
  where we used that $p_s\ast u_0$ is a probability density for all $s \geq 0$, and that $\delta \leq 1$. Therefore,
  \begin{align*}
    &\int_0^t \| (\nabla \varphi_s)^2 - ( K_{\varepsilon} \ast
    \nabla \varphi_s )^2 \|_{\infty} \mathbb{E} [\| u_s \|_{L^1}^2]^{1 / 2}
    \mathd s \\
    &\hspace{40pt} \lesssim \int_0^t \frac{e^{-2c(t-s)}}{(t-s)^{1/2}} \| \nabla \varphi \|_{\infty} \varepsilon^{\alpha} \|
    \varphi \|_{C^{\alpha}} ( 1 + \delta \E[\|u_0\|_{L^1}^{1/2}])\mathd s \\
    &\hspace{40pt} \lesssim \| \nabla \varphi \|_{\infty} \varepsilon^{\alpha} \|
    \varphi \|_{C^{\alpha}} ( 1 + \delta \E[\|u_0\|_{L^1}^{1/2}]),
  \end{align*}
  uniformly in $t\geq 0$, which concludes the bound for~\eqref{eq:main-pr3-s2}.

  For the term~\eqref{eq:main-pr4-s2} it suffices to note that $\|\nabla \varphi_s\|_\infty \lesssim e^{-c(t-s)}\|\nabla \varphi\|_\infty$ by Lemma~
  \ref{lem:CH-regularity},  and therefore the time integral is again bounded uniformly in $t \geq 0$.

  It remains to bound the contribution from the negative part of $u$ in~\eqref{eq:main-pr5-s2}. Since $u_0 \in [\rho_{\min}/2,\rho_{\max} + \rho_{\min}/2] \subset [\rho_{\min}/2, 2\rho_{\max}]$  
  by construction, we obtain from~\eqref{eq:moments-negative-u} below, with $p=1$ and $r=2$ and $c>0$ that changes its value in the second inequality, and for $u_{\min} = \min_x u_0(x)$:
  \[
    \E[\|u_s^-\|_{L^1}^2]^{1/2} \lesssim \delta \E\left[\|u_0\|_{L^2}^{1/2}  \exp \left( -\frac{c}{2} \frac{u_{\min}^2}{\kappa^2 \|u_0\|_\infty}\right)\right] \lesssim \delta \rho_{\max}^{1/2}\exp \left( -c \frac{\rho_{\min}^2}{\kappa^2 \rho_{\max}}\right),
  \]
  which together with the estimate $\|\nabla \varphi_s\|_\infty \lesssim e^{-c(t-s)}\|\nabla \varphi\|_\infty$ concludes the proof.
\end{proof}

\begin{remark}\label{rmk:zero-IC}
  The first term in our estimate~\eqref{eq:main} captures the initial error by approximating $\mu_0$ with $u_0$. If $\rho_{\min}=0$, then $u_0 = \rho_0$ and we can improve this term as follows:
  \begin{align*}
    |\mathbb E[e^{\langle \rho_0,\varphi_0\rangle}] - \mathbb E[e^{\langle \mu_0,\varphi_0\rangle}]| & \leq |\mathbb E[e^{\langle \rho_0,\varphi_0\rangle}\langle \rho_0 - \mu_0, \varphi_0\rangle]| + \mathbb E[e^{\|\varphi_0\|_\infty}\langle \rho_0 - \mu_0, \varphi_0\rangle^2]\\
    & \leq 0 + e^{\|\varphi\|_\infty} N^{-1} \|\varphi\|_\infty^2,
  \end{align*}
 so that we obtain order $N^{-1}$ in that case.
\end{remark}

In the next remark we focus on the dynamic part of the weak error and distinguish the two cases $\rho_{\min} = 0$  and $\rho_{\min}>0$.

\begin{remark}
Assume that $f(x) = \sqrt{x}$, $x \in \R_+$.
\begin{enumerate}
    \item For $\rho_{\min} = 0$, the dynamic part of the weak error becomes
    \[
        \frac{e^{C (\| \varphi \|_{\infty} + \| \varphi \|_{\infty}^2 \kappa^2)}}{N}
    \left(\varepsilon^{\alpha} \| \varphi \|_{C^{\alpha}}^2 + \delta \rho_{\max}^{1/2} \| \nabla \varphi \|_{\infty}^2\right) = O(N^{-1}\varepsilon^\alpha + N^{-3/2} \varepsilon^{-d/2}).
    \]
    For given $\alpha>0$ this is optimized by $\varepsilon = N^{-\frac{1}{2\alpha+d}}$ for which we obtain an error of order 
    \[
       O(N^{-\frac{3\alpha+d}{2\alpha+d}})
       \geq O(N^{-3/2}),
    \]
   which incidentally for $d=1,2$ is worse than $O(N^{-1-\tfrac{2}{d+2}} \log N)$, the weak error that the positivity preserving SPDE approximation of \cite{Djurdjevac2024} achieves.
    This is only an upper bound for the weak error. But in the next section we will derive lower bounds which indicate that for $\rho_{\min}=0$ the approximation rate is indeed limited.
    \item For $\rho_{\min} > 0$ and $\varepsilon<1/e$, the dynamic part of the weak error becomes
    \begin{align*}
        & \frac{e^{C (\| \varphi \|_{\infty} + \| \varphi \|_{\infty}^2 \kappa^2 \rho_{\max})}}{N}
    \left(\varepsilon^{\alpha} \| \varphi \|_{C^{\alpha}}^2 + \delta \rho_{\max}^{1/2} \exp\left(-c \frac{\rho_{\min}^2}{\kappa^2\rho_{\max}}\right) \| \nabla \varphi \|_{\infty}^2\right) \\
        &\hspace{30pt}\leq C_{\varphi,\alpha,\rho_0}\left( N^{-1} \varepsilon^\alpha + N^{-3/2} \varepsilon^{-d/2}\exp\left(-c_{\rho_0,d} \frac{N \varepsilon^d}{\log(1/\varepsilon)} \right)\right),
    \end{align*}
    and with $\varepsilon = N^{-1/d} (\log N)^{3/d}$ this becomes, with changing value of $c_{\rho_0,d}$, 
    \begin{align*}
        & O\left(N^{-1-\alpha/d}(\log N)^{3\alpha/d} + N^{-1} (\log N)^{-3/2}\exp\left(-c_{\rho_0,d} \frac{(\log N)^{3}}{\log N + \log \log N}\right) \right) \\
        & \hspace{30pt} \leq O\left(N^{-1-\alpha/d} (\log N)^{3\alpha/d} + N^{-1} \exp\left(-c_{\rho_0,d} (\log N)^{2} \right) \right) \\
        & \hspace{30pt} \leq O\left(N^{-1-\alpha/d} (\log N)^{3\alpha/d}\right),
    \end{align*}
    as $N \to \infty$. This shows that on smooth test functions $\varphi \in C^\infty(\T^d,\C)$ the SPDE~\eqref{eq:reg_DK} weakly approximates the empirical distribution of Brownian particles at a superpolynomial order -- but only under the assumption that $\rho_{\min}>0$.
\end{enumerate}
\end{remark}

    \begin{remark}\label{rmk:multiple-times}
    For simplicity we focus here on the weak error in the Laplace transform at a fixed time. But with minor modifications, the same proof allows us to control a Laplace transform on path space: For $0 \leq t_1 < t_2 < \dots < t_m = T$ and $\varphi_1, \dots, \varphi_m \in C^\alpha(\T^d,\C)$ and $g \in L^\infty([0,T], C^\alpha(\T^d,\C))$ we could use the Markov property between times $t_k$ and $t_{k+1}$ to control the weak error
    \[
        \left|\E \left[\exp\left(\sum_{i=1}^m \langle u_{t_i}, \varphi_i \rangle + \int_0^T \langle u_s, g_s \rangle \mathd s\right)\right] - \E \left[\exp\left(\sum_{i=1}^m \langle \mu_{t_i}, \varphi_i \rangle + \int_0^T \langle \mu_s, g_s \rangle \mathd s\right)\right] \right|
    \]
    in terms of Hamilton--Jacobi--Bellman equations $(\psi^{(k)})_{k=0,\dots,m-1}$ on $[t_k,t_{k+1}]$ with forcing:
    \[
        \partial_t \psi^{(k)} + \frac12 \Delta \psi + \frac{1}{2N} |\nabla \psi^{(k)}|^2 = g,\qquad \psi^{(k)}(t_{k+1}) = \psi^{(k+1)}_{t_{k+1}}+\varphi_{k+1},
    \]
    where $\psi^{(m)}=0$.
\end{remark}

By controlling the difference of the moment generating functions of $\langle u_t,\varphi\rangle$ and $\langle \mu_t,\varphi\rangle$ for complex arguments, we also control the difference of the moments. The expectations agree:
\[
    \E[\langle u_t,\varphi\rangle] = \langle p_t \ast \rho_0,\varphi\rangle = \E[\langle \mu_t,\varphi\rangle].
\]
The higher moments are controlled with the help of the Cauchy integral formula.

\begin{corollary}\label{cor:moments}
    Let $m \in \{2,3,\dots\}$ and let $f$, $u_0$, $\gamma$ and $\varphi$ be as in Theorem~\ref{thm:main-nonlocal-testfunction}. Assume that $\kappa^2 m + \kappa^2\rho_{\max} \leq \gamma$. Then the following bound holds uniformly in $t \geq 0$:
    \begin{align*}
        &|\mathbb{E} [\langle u_t, \varphi \rangle^m] -\mathbb{E} [\langle
    \mu_t, \varphi \rangle^m] |   \\
    & \lesssim_m \frac{\|\varphi\|_\infty^{m-1}}{N^{1/2}} \left(\varepsilon^\alpha \|\varphi\|_{C^\alpha}
        + \|\varphi\|_{L^\infty} 
        \exp\left(-c \frac{\rho_{\min}^2}{\kappa^2\rho_{\max}}\right)\right)\\ \nonumber
    & \hspace{10pt}+ \frac{\|\varphi\|_\infty^{m-2}}{N} \left(\varepsilon^{\alpha} \|\nabla \varphi\|_\infty \| \varphi \|_{C^{\alpha}} + (\| \operatorname{id} - f^2 \|_{\infty} + \delta \rho_{\max}^{1/2} \exp\left(-c \frac{\rho_{\min}^2}{\kappa^2\rho_{\max}}\right)) \| \nabla \varphi \|_{\infty}^2\right).
    \end{align*}
\end{corollary}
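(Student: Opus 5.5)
The plan is to apply Theorem~\ref{thm:main-nonlocal-testfunction} to the scaled test functions $z\varphi$, $z\in\C$, and extract the $m$-th moment by Cauchy's integral formula. For fixed $t$, set
\[
F(z) := \E[e^{z\langle u_t,\varphi\rangle}] - \E[e^{z\langle \mu_t,\varphi\rangle}].
\]
Then $\E[\langle u_t,\varphi\rangle^m]-\E[\langle \mu_t,\varphi\rangle^m] = m!\,\frac{1}{2\pi i}\oint_{|z|=r} F(z) z^{-m-1}\,\mathd z$, and hence
\[
|\E[\langle u_t,\varphi\rangle^m]-\E[\langle \mu_t,\varphi\rangle^m]| \leq m!\, r^{-m}\sup_{|z|=r}|F(z)|.
\]
First I check that $F$ is analytic near the closed disc. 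Corollary~\ref{cor:exp-moments-u} gives $\E[e^{C|z|\|\varphi\|_\infty\|u_t\|_{L^1}}]<\infty$ for $\kappa^2|z|\|\varphi\|_\infty$ small. For $\mu_t$ we simply have $\|\mu_t\|_{L^1}=1$. So differentiation under the expectation is justified and the Taylor coefficients are the moments divided by $m!$.

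The key choice is the radius: take $r = 1/\|\varphi\|_\infty$, possibly scaled by a small constant, so that $\|z\varphi\|_\infty\le 1$ on the circle. Then the prefactors $e^{C\|z\varphi\|_\infty}$ and $e^{C(\|z\varphi\|_\infty+\kappa^2\rho_{\max}\|z\varphi\|_\infty^2)}$ in \eqref{eq:main} are bounded by absolute constants, using $\kappa^2\rho_{\max}\le\gamma\le 1$. The hypothesis $\kappa^2\|z\varphi\|_\infty\le\gamma$ becomes $\kappa^2\le\gamma$, which follows from $\kappa^2 m+\kappa^2\rho_{\max}\le\gamma$. I expect the role of the factor $m$ there is to leave room to shrink $r$ by an $m$-dependent constant if needed, but $r=1/\|\varphi\|_\infty$ already seems to suffice.

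Next, insert $z\varphi$ into the right-hand side of \eqref{eq:main}. Every norm scales linearly in $|z|=r$: $\|z\varphi\|_{C^\alpha}=r\|\varphi\|_{C^\alpha}$, $\|\nabla(z\varphi)\|_\infty=r\|\nabla\varphi\|_\infty$, and so on. The $N^{-1/2}$ term is linear in $\varphi$, so it contributes a factor $r$. The $N^{-1}$ term is quadratic in $\varphi$ (products such as $\|\nabla\varphi\|_\infty\|\varphi\|_{C^\alpha}$ and $\|\nabla\varphi\|_\infty^2$), so it contributes $r^2$. Multiplying by $r^{-m}=\|\varphi\|_\infty^{m}$ turns these into $\|\varphi\|_\infty^{m-1}$ and $\|\varphi\|_\infty^{m-2}$, which is exactly the claimed bound with constant depending on $m$. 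Uniformity in $t$ is inherited from the theorem.

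The main obstacle is the case $\|\varphi\|_\infty=0$, or very small, where $r$ blows up. One would need to check that the theorem's prefactors stay bounded on the larger circle, which they do since they depend only on $\|z\varphi\|_\infty\le1$. If $\varphi\equiv0$ both sides vanish for $m\ge2$, so a limiting or trivial argument handles it. A secondary subtlety is that the theorem's implicit constant must not depend on $\varphi$ beyond the displayed norms, so that it applies uniformly over the circle; I would assume this from the structure of the proof of Theorem~\ref{thm:main-nonlocal-testfunction}.
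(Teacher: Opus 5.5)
Your proposal is correct and follows the paper's proof. You use Cauchy's integral formula on a circle $|w|=R$, apply Theorem~\ref{thm:main-nonlocal-testfunction} to $w\varphi$, and the linear and quadratic scaling in $R$ then produces the factors $\|\varphi\|_\infty^{m-1}$ and $\|\varphi\|_\infty^{m-2}$. The only difference is the radius: the paper takes $R = m/(\|\varphi\|_\infty(1+\kappa\sqrt{\rho_{\max}m}))$, which is where the hypothesis $\kappa^2 m\le\gamma$ is used and which, via Stirling's formula, gives an implicit constant growing only exponentially in $m$. Your choice $R=1/\|\varphi\|_\infty$ needs only $\kappa^2\le\gamma$ and gives a constant of order $m!$, which is enough for the stated $\lesssim_m$ bound.
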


\begin{proof}
    If $F:\C\to\C$ is an entire function, the generalized Cauchy integral formula gives for any $R > 0$
\[ F^{(n)} (0) = \frac{n!}{2 \pi i} \int_{\partial B_R (0)} \frac{F
   (w)}{w^{n + 1}} \mathd w. \]
We apply this with $F (z) = \exp (z \langle u_t, \varphi \rangle)$ and $G (z)
= \exp (z \langle \mu_t, \varphi \rangle)$. Then 
\[
    \mathbb{E} [\langle u_t, \varphi \rangle^m] =\mathbb{E} [F^{(m)} (0)] =
   \frac{m!}{2 \pi i} \int_{\partial B_R (0)} \frac{\mathbb{E} [\exp (w
   \langle u_t, \varphi \rangle)]}{w^{m + 1}} \mathd w,
\]
and similarly for $\mu_t$, provided that $R>0$ is small enough that the absolute exponential moments exist and are bounded across $|w|=R$, so that Fubini's theorem allows us to exchange expectation and contour integral. This is the case if $\kappa^2\|R\varphi\|_\infty \leq \gamma$, for $\gamma$ as in Theorem~\ref{thm:main-nonlocal-testfunction}, and therefore
\begin{align} \nonumber
  &| \mathbb{E} [\langle u_t, \varphi \rangle^m] -\mathbb{E} [\langle \mu_t,
  \varphi \rangle^m] | = \left| \frac{m!}{2 \pi i} \int_{\partial B_R
  (0)} \frac{\mathbb{E} [\exp (w \langle u_t, \varphi \rangle)] -\mathbb{E}
  [\exp (w \langle \mu_t, \varphi \rangle)]}{w^{m + 1}} \mathd w \right|\\ \nonumber
  & \leq \frac{m!}{2 \pi} \max_{w \in \partial B_R (0)} \left| \frac{|
  \mathbb{E} [\exp (w \langle u_t, \varphi \rangle)] -\mathbb{E} [\exp (w
  \langle \mu_t, \varphi \rangle)] |}{w^{m + 1}} \right| \cdot L (\partial B_R
  (0))\\ \nonumber
  &= \frac{m!}{R^m} \max_{| w | = R} | \mathbb{E} [\exp (w \langle u_t,
  \varphi \rangle)] -\mathbb{E} [\exp (w \langle \mu_t, \varphi \rangle)] | \\ \nonumber
  &\lesssim \frac{m!}{R^{m-1}}\frac{e^{CR\|\varphi\|_\infty}}{N^{1/2}}\left(\varepsilon^\alpha \|\varphi\|_{C^\alpha}
        + \|\varphi\|_{L^\infty} 
        \exp\left(-c \frac{\rho_{\min}^2}{\kappa^2\rho_{\max}}\right)\right)\\ \nonumber
    & \hspace{10pt}+ \frac{m!}{R^{m-2}}\frac{e^{C (R\| \varphi \|_{\infty} + R^2\kappa^2 \rho_{\max}\| \varphi \|_{\infty}^2 )}}{N}\\
    & \hspace{30pt} \times \left(\varepsilon^{\alpha} \|\nabla \varphi\|_\infty \| \varphi \|_{C^{\alpha}} + (\| \operatorname{id} - f^2 \|_{\infty} + \delta \rho_{\max}^{1/2} \exp\left(-c \frac{\rho_{\min}^2}{\kappa^2\rho_{\max}}\right)) \| \nabla \varphi \|_{\infty}^2\right), \label{eq:moment-pr1}
\end{align}
where the last step follows from Theorem~\ref{thm:main-nonlocal-testfunction} with $w\varphi$ in place of $\varphi$, which satisfies $\|w\varphi\| = R \|\varphi\|$ for any norm. We now choose $R = \tfrac{m}{\|\varphi\|_\infty(1+\kappa\sqrt{\rho_{\max}m})}$, which is admissible since we assumed $\kappa^2 m \leq \gamma$. This gives, with changing $C>0$,
\begin{equation}\label{eq:moment-pr2}
    e^{C R\| \varphi \|_{\infty}} + e^{C (R\| \varphi \|_{\infty} + R^2\kappa^2 \rho_{\max}\| \varphi \|_{\infty}^2 )} \lesssim e^{C m},
\end{equation}
and, by Stirling's formula,
\[
    \frac{m!}{R^{m-2}} \lesssim \sqrt{m} \left(\frac{m}{e}\right)^m\left(\frac{\|\varphi\|_\infty(1+\kappa\sqrt{\rho_{\max}m})}{m}\right)^{m-2} \lesssim m^{5/2} e^{-m} (\|\varphi\|_\infty(1+\kappa\sqrt{\rho_{\max}m}))^{m-2}.
\]
By Young's inequality $\kappa \sqrt{\rho_{\max} m} \leq \kappa^2 \rho_{\max} + \kappa^2 m \leq \gamma$,   
and therefore
\begin{equation}\label{eq:moment-pr3}
    \frac{m!}{R^{m-2}} \lesssim_m \|\varphi\|_\infty^{m-2},\qquad \text{and similarly} \qquad \frac{m!}{R^{m-1}} \lesssim_m \|\varphi\|_\infty^{m-1}.
\end{equation}
Combining \eqref{eq:moment-pr1}, \eqref{eq:moment-pr2} and \eqref{eq:moment-pr3}, we obtain the claimed bound
\begin{align*}
  &| \mathbb{E} [\langle u_t, \varphi \rangle^m] -\mathbb{E} [\langle \mu_t,
  \varphi \rangle^m] | \\
    & \lesssim_m \frac{\|\varphi\|_\infty^{m-1}}{N^{1/2}} \left(\varepsilon^\alpha \|\varphi\|_{C^\alpha}
        + \|\varphi\|_{L^\infty} 
        \exp\left(-c \frac{\rho_{\min}^2}{\kappa^2\rho_{\max}}\right)\right)\\ \nonumber
    & + \frac{\|\varphi\|_\infty^{m-2}}{N} \left(\varepsilon^{\alpha} \|\nabla \varphi\|_\infty \| \varphi \|_{C^{\alpha}} + \left(\| \operatorname{id} - f^2 \|_{\infty} + \delta \rho_{\max}^{1/2} \exp\left(-c \frac{\rho_{\min}^2}{\kappa^2\rho_{\max}}\right)\right) \| \nabla \varphi \|_{\infty}^2\right). \qedhere
\end{align*}
\end{proof}

In the previous estimates, we tried to allow for an initial density $\rho_0$ with relatively large concentrations, meaning that we tried to work with the $L^1$ norm of $u_t$ as much as possible, at the price of assuming a uniform control in the test function $\varphi$. In the next result we assume instead that the initial density $\rho_0$ does not have too strong concentration, $\rho_{\max}$ is of moderate size, but the test function might be localized. In other words, we use the $L^\infty$ norm of $u_t$, and $L^1$ or $L^2$ based Besov norms for $\varphi$.

\begin{theorem}\label{thm:main-local-testfunction}
    For $f, \bar u, u, c$ and $\gamma$ as in Theorem~\ref{thm:main-nonlocal-testfunction} let $\varphi \in B^{\alpha}_{2,\infty}(\T^d,\C)$ with $\kappa^2\|\varphi\|_\infty \leq \gamma$ and with $\nabla \varphi \in L^\infty$. Then 
  \begin{align}\nonumber
    &| \mathbb{E} [e^{\langle u_t, \varphi \rangle}] -\mathbb{E} [e^{\langle
    \mu_t, \varphi \rangle}] |  \lesssim_{\alpha} \frac{e^{C\|\varphi\|_{L^1}\rho_{\max}}}{N^{1/2}}\left( \varepsilon^\alpha\|\varphi\|_{B_{2,\infty}^\alpha} \rho_{\max}^{1/2} + \|\varphi\|_{L^4}\rho_{\max}^{1/4}\exp\left(-c \frac{\rho_{\min}^2}{\kappa^2\rho_{\max}}\right)\right)\\ \nonumber
    & \hspace{10pt}+ \frac{e^{C \| \varphi \|_{L^1} \rho_{\max}}}{N}
    \varepsilon^{\alpha} \|\nabla \varphi\|_\infty \| \varphi \|_{B^{\alpha}_{1,\infty}}\rho_{\max} \\
    &\hspace{10pt} + \frac{e^{C \| \varphi \|_{L^1} \rho_{\max}}}{N}(\| \operatorname{id} - f^2 \|_{\infty} + \kappa \rho_{\max}^{1/2} \exp\left(-c \frac{\rho_{\min}^2}{\kappa^2\rho_{\max}}\right)) \| \nabla \varphi \|_{L^2}^2. \label{eq:main-local-testfunction} 
  \end{align}
\end{theorem}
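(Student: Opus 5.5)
We follow the structure of the proof of Theorem~\ref{thm:main-nonlocal-testfunction}. The only change is that every pairing $\langle u_s,\cdot\rangle$ is now estimated with $u_s$ in $L^\infty$ or $L^2$ rather than $L^1$, so that the test function is measured in $L^1$ or $L^2$ based norms.

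\emph{Duality and splitting.} As before, we first reduce to $\varphi\in C^\infty(\T^d,\C)$ by density. We then use the complex Cole--Hopf duality of Section~\ref{sec:HJB} to write $\E[e^{\langle\mu_t,\varphi\rangle}]=\E[e^{\langle\mu_0,\varphi_0\rangle}]$, and we split the error as in~\eqref{eq:main-pr1}.

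\emph{Initial error.} For the initial term we would again use Proposition~\ref{prop:IC}. This time we bound $|e^{\langle\bar u,\varphi_0\rangle}|\le e^{\|\bar u\|_\infty\|\varphi_0\|_{L^1}}\le e^{C\rho_{\max}\|\varphi\|_{L^1}}$, using $\bar u\le 2\rho_{\max}$. We need an $L^1$ analogue of~\eqref{eq:CH-Lebesgue}, namely $\|\varphi_0\|_{L^1}\lesssim\|\varphi\|_{L^1}$. This should follow from the Cole--Hopf representation, since $\|\frac1N\varphi\|_\infty\le\gamma$ allows us to linearise $N\log(p\ast e^{\varphi/N})$ with $|e^z-1|\lesssim|z|$. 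For $\mu_0$ we use the elementary identity $\E[e^{\langle\mu_0,\psi\rangle}]=(\int e^{\psi/N}\rho_0)^N\le e^{C\rho_{\max}\|\psi\|_{L^1}}$. The remaining factor $\E|\langle \bar u-\mu_0,\varphi_0\rangle|$ should come out of the proof of Proposition~\ref{prop:IC} in variance form. For the regular part, $\E\langle K_\varepsilon\ast(\mu_0-\rho_0)-(\mu_0-\rho_0),\varphi_0\rangle^2\le N^{-1}\int|(K_\varepsilon-\delta)\ast\varphi_0|^2\rho_0$, which is at most $N^{-1}\varepsilon^{2\alpha}\|\varphi\|_{B^\alpha_{2,\infty}}^2\rho_{\max}$. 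The cutoff event $\|\nu\|_\infty>\rho_{\min}/2$ is handled by Cauchy--Schwarz: we use $\E\langle\mu_0-\rho_0,\varphi_0\rangle^4\lesssim N^{-2}\|\varphi_0\|_{L^4}^4\rho_{\max}$ and the exponential tail bound on the probability of that event. This produces the terms $\varepsilon^\alpha\|\varphi\|_{B^\alpha_{2,\infty}}\rho_{\max}^{1/2}$ and $\|\varphi\|_{L^4}\rho_{\max}^{1/4}e^{-c\rho_{\min}^2/(\kappa^2\rho_{\max})}$. Along the way we need the Besov and $L^4$ analogues of Lemma~\ref{lem:CH-regularity}, uniformly in time.

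\emph{Dynamic error.} We apply It\^o's formula exactly as in~\eqref{eq:main-pr2-s2}, but now use the duality $L^\infty$--$L^1$ the other way round. In the first term we bound $|\langle(\nabla\varphi_s)^2-(K_\varepsilon\ast\nabla\varphi_s)^2,u_s\rangle|\le\|u_s\|_\infty\|\nabla\varphi_s\|_\infty\|(1-K_\varepsilon\ast)\nabla\varphi_s\|_{L^1}$. The last factor is $\lesssim\varepsilon^\alpha\|\nabla\varphi_s\|_{B^\alpha_{1,\infty}}$ by Lemma~\ref{lem:K-eps-error}, and the heat smoothing in the $L^1$ version of Lemma~\ref{lem:CH-regularity} gives an integrable factor $(t-s)^{-1/2}e^{-c(t-s)}\|\varphi\|_{B^\alpha_{1,\infty}}$. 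The second term is bounded by $\|\mathrm{id}-f^2\|_\infty\|\nabla\varphi_s\|_{L^2}^2$. For the third term we use $\|\nabla\varphi_s\|_{L^2}^2\,\E[\|u_s^-\|_\infty^2]^{1/2}$, where the sup-norm moment bound for the negative part (the estimate behind~\eqref{eq:moments-negative-u} with $p=\infty$) gives $\kappa\rho_{\max}^{1/2}e^{-c\rho_{\min}^2/(\kappa^2\rho_{\max})}$. Here $\kappa$ replaces $\delta$ because of the uniform norm. Finally, $\E[\|u_s\|_\infty^2]^{1/2}\lesssim\rho_{\max}$, since $p_s\ast u_0\le2\rho_{\max}$ and $v_s$ is of size $\kappa\sqrt{\rho_{\max}}\le\rho_{\max}+1$.

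\emph{Main obstacle.} The hard step is the uniform-in-time exponential moment $\E|e^{\lambda\langle u_s,\varphi_s\rangle}|\lesssim e^{C\rho_{\max}\|\varphi\|_{L^1}}$. We would write $\langle u_s,\varphi_s\rangle=\langle p_s\ast u_0,\varphi_s\rangle+\langle v_s,\varphi_s\rangle$. The first part is bounded by $2\rho_{\max}\|\varphi_s\|_{L^1}$. For the second, we would use the exponential martingale estimate underlying Corollary~\ref{cor:exp-moments-u}: its quadratic variation is $\frac1N\int|K_\varepsilon\ast\nabla\psi|^2f(u^+)^2\lesssim N^{-1}\|u\|_\infty\|\nabla\psi\|_{L^2}^2$. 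This forces a Gronwall or stopping argument with the sup-norm exponential tails of $v$, and the smallness condition $\kappa^2\|\varphi\|_\infty\le\gamma$ should close it. If the $L^1$ bound proves too delicate, we would first try to adapt Corollary~\ref{cor:exp-moments-u} with $\|u_s\|_\infty$ in place of $\|u_s\|_{L^1}$.
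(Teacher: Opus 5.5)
Your proposal is correct and follows the paper's proof essentially step by step: Proposition~\ref{prop:IC} with $p=2$ for the initial error, the $L^\infty$--$L^1$ and $L^\infty$--$L^2$ pairings in the three dynamic terms, and the sup-norm bounds of Corollaries~\ref{cor:vunif} and~\ref{cor:exp-moments-u}; your explicit control $\E|e^{2\langle\mu_0,\varphi_0\rangle}|=(\int e^{2\operatorname{Re}\varphi_0/N}\rho_0)^N\le e^{C\rho_{\max}\|\varphi\|_{L^1}}$ also fills in a detail the paper compresses into ``as in Theorem~\ref{thm:main-nonlocal-testfunction}''. The step you flag as the main obstacle is in fact immediate and needs no martingale or Gronwall argument: \eqref{eq:exp-moments-u} is already stated for $p=\infty$ (it comes from moment bounds and Lemma~\ref{lem:tail-estimate}, not from an exponential martingale estimate), so $|e^{\lambda\langle u_s,\varphi_s\rangle}|\le e^{C\lambda\|\varphi\|_{L^1}\|u_s\|_\infty}$, obtained from \eqref{eq:CH-Lebesgue} with $p=1$, gives the claim, because $\kappa^2\|\varphi\|_{L^1}\le\kappa^2\|\varphi\|_\infty\le\gamma$ makes the exponent admissible and $\kappa^2\rho_{\max}\|\varphi\|_{L^1}^2\le\gamma\rho_{\max}\|\varphi\|_{L^1}$ is absorbed (likewise, Lemma~\ref{lem:CH-regularity} is stated for all $p,q\in[1,\infty]$ and already covers the $L^1$, $L^4$ and Besov cases you say you need).
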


\begin{proof}
Compared to Theorem~\ref{thm:main-nonlocal-testfunction}, we want to exchange $L^\infty$ based norms of the test function $\varphi$ with $L^p$ based norms, for $p$ as small as possible. For the initial condition, we use $p=2$ in Proposition~\ref{prop:IC} and obtain
\begin{align*} 
        \mathbb{E}\left[\left|\langle \mu_0-u_0,\varphi\rangle\right|^2\right]^{1/2}
        \lesssim N^{-1/2}\varepsilon^\alpha\|\varphi\|_{B_{2,\infty}^\alpha} \rho_{\max}^{1/2} + N^{-1/2}\|\varphi\|_{L^4}\rho_{\max}^{1/4}\exp\left(-c \left(\frac{\rho_{\min}^2}{\rho_{\max}\kappa^2}\right)\right),
\end{align*}
which as in Theorem~\ref{thm:main-nonlocal-testfunction} leads to the first term on the right hand side of~\eqref{eq:main-local-testfunction}.

  For the dynamic contribution, we obtain similarly as in the proof of Theorem~\ref{thm:main-nonlocal-testfunction}:
  \begin{align*} \nonumber
    | \mathbb{E} [e^{\langle u_t, \varphi \rangle}] -\mathbb{E} [e^{\langle
    u_0, \varphi_0 \rangle}] | &  \lesssim \frac{1}{N} \int_0^t \mathbb{E} [|e^{2 \langle u_s, \varphi_s
    \rangle}|]^{1 / 2} \| \nabla \varphi \|_{\infty} \varepsilon^{\alpha} \frac{e^{-c(t-s)}}{(t-s)^{1/2}} \|
    \varphi \|_{B^{\alpha}_{1,\infty}} \mathbb{E} [\| u_s \|_{L^\infty}^2]^{1 / 2}
    \mathd s\\ \nonumber
    & \quad + \frac{1}{N} \int_0^t \mathbb{E} [|e^{\langle u_s, \varphi_s
    \rangle}|]    e^{-2c(t-s)}\| \nabla \varphi \|_{L^2}^2 \| \operatorname{id} - f^2 \|_{\infty}
    \mathd s\\ 
    & \quad + \frac{1}{N} \int_0^t \mathbb{E} [|e^{2 \langle u_s, \varphi_s
    \rangle}|]^{1 / 2} e^{-2c(t-s)} \| \nabla \varphi \|_{L^2}^2 \mathbb{E} [\| u_s^-
    \|_{L^\infty}^2]^{1 / 2} \mathd s. 
  \end{align*}
  For the first term we applied Lemma~\ref{lem:K-eps-error} and Lemma~\ref{lem:CH-regularity} to bound
  \begin{align*}
    \| ( \nabla \varphi_s)^2 - ( K_{\varepsilon} \ast \nabla \varphi_s )^2
    \|_{L^1} \lesssim \| \nabla \varphi \|_{\infty} \varepsilon^{\alpha} \frac{e^{-c(t-s)}}{(t-s)^{1/2}} \|
    \varphi \|_{B^{\alpha}_{1,\infty}},
  \end{align*}  
  and for the second and third term we used that by Parseval's identity and Lemma~\ref{lem:CH-regularity}
  \[
   \| K_{\varepsilon} \ast \nabla
    \varphi_s \|_{L^2}^2 \leq \|\nabla \varphi_s\|_{L^2}^2 \lesssim e^{-2c(t-s)} \|\nabla \varphi\|_{L^2}^2 . 
  \]
  For the factor $\E[e^{\lambda \langle u_s,\varphi_s\rangle}]$, $\lambda \in \{1,2\}$, we apply the $L^1$ bound on $\varphi_s$ from Lemma~\ref{lem:CH-regularity} and the bound on the exponential moments of $\|u\|_{L^\infty}$ from Corollary~\ref{cor:exp-moments-u}:
  \begin{equation*}
    \mathbb{E} [|e^{\lambda \langle u_s, \varphi_s \rangle}|] \leq
     \mathbb{E} \left[ e^{\lambda C \| \varphi \|_{L^1} \| u_s \|_{\infty}}
     \right] \lesssim \exp (\lambda C \| \varphi \|_{L^1} \rho_{\max}
     +C\lambda^2 \| \varphi \|_{L^1}^2 \kappa^2 \rho_{\max}) \lesssim e^{C \| \varphi \|_{L^1} \rho_{\max}}.
  \end{equation*}
  For the factor $\E[\| u_s \|_{L^\infty}^2]^{1 / 2}$ we  split $u_s = p_s\ast u_0 + v_s$ and apply Corollary~\ref{cor:vunif}:
  \[
     \sup_{s \geq 0} \mathbb{E} [\| u_s \|_{L^\infty}^2]^{1 / 2} \leq \sup_{s \geq 0} \mathbb{E} [\| p_s\ast u_0 \|_{L^\infty}^2]^{1 / 2} + \sup_{s \geq 0} \mathbb{E} [\| v_s \|_{L^\infty}^2]^{1 / 2} \lesssim   \E[\|u_0\|_{L^\infty} + \delta\|u_0\|_{L^\infty}^{1/2}] \lesssim \rho_{\max}.
  \]
  For the negative part of $u$ we obtain from Corollary~\ref{cor:exp-moments-u}
  \[
    \E[\|u_s^-\|_{L^\infty}^2]^{1/2} \lesssim \kappa \E\left[\|u_0\|_{L^\infty}^{1/2}  \exp \left( -\frac{c}{2} \frac{u_{\min}^2}{\kappa^2 \|u_0\|_\infty}\right)\right] \lesssim \kappa \rho_{\max}^{1/2}\exp \left( -c \frac{\rho_{\min}^2}{\kappa^2 \rho_{\max}}\right),
  \]
  which concludes the proof.
\end{proof}

With the same proof as for Corollary~\ref{cor:moments}, we could also obtain a moment bound for localized test functions.

As an example, we can test $\mu_t$ and $u_t$ against a bump function concentrated at scale $\eta\gg \varepsilon$, which shows that for large $N$ and small $\varepsilon$ the fluctuating field $u_t$ and the particle system $\mu_t$ agree well  down to
a certain spatial resolution and we can recover local (statistical) information about $\mu_t$ from $u_t$.

\begin{example}
    Take $\varphi(x) = \varphi_\eta(x) = \eta^{-d} \varphi_0 (\frac{x-x_0}{\eta})$ for $\varphi_0 \in C^\infty_c(\R^d)$ with $\|\varphi_0\|_{L^1}=1$ and $\eta>0$ and $x_0 \in [0,1]^d$.  Then we have for all $p,q \in [1,\infty]$ and $\alpha>0$
    \[\|\varphi_\eta\|_{B^\alpha_{p,q}} \lesssim \eta^{-\alpha-d(1-\tfrac1p)}  \|\varphi_0\|_{B^\alpha_{p,q}},\qquad \|\varphi_\eta\|_{L^p} \lesssim \eta^{-d(1-\tfrac1p)}  \|\varphi_0\|_{L^p},
    \]
    and therefore
    \begin{align}\nonumber
    &| \mathbb{E} [e^{\langle u_t, \varphi_\eta \rangle}] -\mathbb{E} [e^{\langle
    \mu_t, \varphi_\eta \rangle}] |  \lesssim_{\alpha,\varphi_0} \frac{e^{C\rho_{\max}}}{N^{1/2}}\left( \eta^{-d/2}\left(\frac\varepsilon\eta\right)^\alpha \rho_{\max}^{1/2} + \eta^{-3d/4} \rho_{\max}^{1/4}\exp\left(-c \frac{\rho_{\min}^2}{\kappa^2\rho_{\max}}\right)\right)\\
    & \hspace{10pt}+ \frac{e^{C \rho_{\max}}}{N} \eta^{-d-2} \left(\eta
    \left(\frac{\varepsilon}{\eta}\right)^{\alpha} \rho_{\max} + \| \operatorname{id} - f^2 \|_{\infty} + \kappa \rho_{\max}^{1/2} \exp\left(-c \frac{\rho_{\min}^2}{\kappa^2\rho_{\max}}\right) \right). \nonumber
  \end{align}
\end{example}

\section{Lower bounds on the approximation error for not strictly positive densities} \label{sec:lower_bound} 

In this section we show that the assumption $\rho_{\min} > 0$ is crucial to obtain superpolynomial convergence rates. We give an example with $\rho_{\min} = 0$ for which the SPDE~\eqref{eq:reg_DK} only achieves a limited approximation rate. The reason is that due to the violation of the maximum principle, the SPDE $u$ takes negative values with too high probability, and this limits the approximation quality.

We consider the one-dimensional torus $\mathbb{T}$ and we take $f=\sqrt{\cdot}$ and choose the initial distribution
 \[
 \rho_0 (x)= 2 \mathds{1}_{[\frac{1}{2},1]}(x).
 \]
Note that $\rho_0$ is a bounded probability density, and that $\bar{u}=\rho_0$ because $\rho_{\min}=0$, so that also $u_0 = \rho_0$. Our aim is to obtain a lower bound on the approximation of the second moment. Since our focus is on the dynamic error, we remove the contribution from the initial condition, which in this setting is equivalent to considering the variance instead of the second moment. To simplify one of the computations, we consider the Galerkin projection type multiplier $\chi=\mathds{1}_{[-1,1]}$ here, instead of taking $\chi \in C^\infty_c(\R)$. This is not essential and similar results hold for $\chi \in C^\infty_c(\R)$.

In the example, the approximation rate is limited by the fact that $u_t$ takes negative values with too high probability. The idea is to take a small interval $B_\varepsilon$ of length $O(\varepsilon)$ that is separated from the support $[1/2,1]$ of $u_0$ by distance $O(\varepsilon)$, so that for small times $t\leq \varepsilon^2/|\log \varepsilon|^2$ the convolution with the heat kernel, $p_t \ast u_0$, has essentially no mass on $B_\varepsilon$, while due to the nonlocal convolution with $K_\varepsilon$ the fluctuation $v_t = u_t - p_t \ast u_0$ is comparably large so that it dominates $p_t\ast u_0$ on $B_\varepsilon$, at least for short times. Since the weak error has an accumulating structure (it is a time integral), this propagates to times $t \geq \varepsilon^2/|\log \varepsilon|^2$.

\begin{theorem}
    Assume that $\varepsilon<1, \kappa \leq 1$, and let $\varphi(x) = \sin(2\pi x)$. 
    There exists $c>0$ such that for all $t \geq \frac{\varepsilon^2}{|\log \varepsilon|^2}$
    \begin{align} \nonumber
        & |\E[(\langle u_t,\varphi\rangle - \langle p_t \ast u_0, \varphi\rangle)^2] - \E[(\langle \mu_t, \varphi\rangle - \langle p_t \ast \mu_0, \varphi\rangle)^2]| \\
        &\hspace{40pt} \gtrsim \frac{e^{-4\pi^2 t}}{N}  \frac{\varepsilon^{5/2}}{|\log \varepsilon|^{3}} (N^{-1/2} - \exp(-c|\log\varepsilon|^2)).
    \end{align}
\end{theorem}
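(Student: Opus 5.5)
The plan is to compute both variances exactly by It\^o's isometry, show that their difference is a nonnegative time integral of $\E[u_s^-]$, and then bound $\E[u_s^-]$ from below on a small interval next to the support of $u_0$ at short times.

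\textbf{Step 1: exact formulas for both variances.} Since $\varphi(x)=\sin(2\pi x)$, we have $p_{t-s}\ast\varphi = e^{-2\pi^2(t-s)}\varphi$. Moreover $\nabla\varphi$ only has Fourier modes $\pm1$, and $\chi=\mathds 1_{[-1,1]}$, so $K_\varepsilon\ast\nabla\varphi=\nabla\varphi$ for $\varepsilon<1$.

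For the particle system, $\langle\mu_t,\varphi\rangle-\langle p_t\ast\mu_0,\varphi\rangle$ is a martingale in the Brownian motions started from the fixed initial points. It\^o's isometry, followed by averaging over the i.i.d.\ initial positions, gives
\[
  \E[(\langle\mu_t,\varphi\rangle-\langle p_t\ast\mu_0,\varphi\rangle)^2]
  = \frac{4\pi^2}{N}\int_0^t e^{-4\pi^2(t-s)}\langle p_s\ast\rho_0,\cos^2(2\pi\cdot)\rangle\,\mathd s .
\]
For the SPDE, by the weak formulation~\eqref{eq:weak_formulation_ibp} (equivalently, the mild formulation~\eqref{eq:mild_formulation} tested against $\varphi$),
\[
  \langle u_t,\varphi\rangle-\langle p_t\ast u_0,\varphi\rangle = -N^{-1/2}\int_0^t \langle K_\varepsilon\ast(\sqrt{u_s^+}\,\mathd W_s),\nabla p_{t-s}\ast\varphi\rangle .
\]
It\^o's isometry then gives the same formula with $\langle p_s\ast\rho_0,\cdot\rangle$ replaced by $\E\langle u_s^+,\cdot\rangle$.

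\textbf{Step 2: reduction to the negative part.} Because $\E[u_s]=p_s\ast u_0$ and $u_0=\rho_0$, we have $\E[u_s^+]-p_s\ast\rho_0=\E[u_s^-]$. Hence the difference in the statement equals
\[
  \frac{4\pi^2}{N}\int_0^t e^{-4\pi^2(t-s)}\,\E\langle u_s^-,\cos^2(2\pi\cdot)\rangle\,\mathd s .
\]
The integrand is nonnegative, so the integral can be restricted to $s\le T_\varepsilon:=\varepsilon^2/|\log\varepsilon|^2$ and to an interval $B_\varepsilon$ of length $\sim\varepsilon$ at distance $\sim\varepsilon$ to the left of $1/2$. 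There $\cos^2(2\pi x)\ge 1/2$, and $e^{-4\pi^2(t-s)}\ge e^{-4\pi^2t}$. This is also why the bound holds for every $t\ge T_\varepsilon$.

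\textbf{Step 3: pointwise lower bound on $\E[u_s^-(x)]$.} For $x\in B_\varepsilon$ and $s\le T_\varepsilon$, write $u_s=p_s\ast u_0+v_s$. Gaussian tails of the heat kernel at distance $\varepsilon$ with variance $s\le\varepsilon^2/|\log\varepsilon|^2$ give $p_s\ast u_0(x)\le \exp(-c|\log\varepsilon|^2)$, so
\[
  \E[u_s^-(x)]\ \ge\ \E[v_s(x)^-]-\exp(-c|\log\varepsilon|^2).
\]
This subtraction is the source of the $\exp(-c|\log\varepsilon|^2)$ term in the statement. Since $\E v_s(x)=0$, we have $\E v^-=\tfrac12\E|v|$, and by H\"older's inequality $\E|v|\ge(\E v^2)^{3/2}/(\E v^4)^{1/2}$.

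\textbf{Step 4: second and fourth moments of $v_s(x)$.} For the second moment, It\^o's isometry gives
\[
  \E v_s(x)^2=\frac1N\int_0^s\!\!\int \E[u_r^+(y)]\,|\partial_y(p_{s-r}\ast K_\varepsilon)(x-y)|^2\,\mathd y\,\mathd r .
\]
I will use $\E[u_r^+]\ge p_r\ast\rho_0\gtrsim 1$ on most of $[1/2,1]$, together with an explicit lower bound on $|\partial(p_{s-r}\ast K_\varepsilon)|$ at distances between $\varepsilon$ and $1$. For small $s-r$ this kernel is close to the derivative of the Dirichlet (sinc) kernel, which still has weight of order $\varepsilon^{-1}|z|^{-1}$ at distance $|z|\gtrsim\varepsilon$; this nonlocal tail is exactly what lets the fluctuation reach $B_\varepsilon$. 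For the fourth moment, BDG together with the moment bounds on $u$ from Section~\ref{sec:negative} (Lemma~\ref{lem:v-Lebesgue-moments}, Corollary~\ref{cor:vunif}, Corollary~\ref{cor:exp-moments-u}) and $\kappa\le1$ should give $\E v^4\lesssim(\E v^2)^2$ up to logarithmic factors, so that $\E|v|\gtrsim(\E v^2)^{1/2}/\mathrm{polylog}$.

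\textbf{Step 5: assembly.} Inserting the lower bound on $\E v_s(x)^2$ into Step 3, and integrating over $x\in B_\varepsilon$ and $s\le T_\varepsilon$, should produce $N^{-1/2}\varepsilon^{5/2}|\log\varepsilon|^{-3}$. Together with the prefactor $e^{-4\pi^2t}/N$ from Step 2, this gives the claimed bound.

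The main obstacle is Step 4: the lower bound on $\E v_s(x)^2$ through the sinc-type kernel, and the matching fourth-moment upper bound. The exact powers of $\varepsilon$ and $|\log\varepsilon|$ depend on carefully balancing the distance of $B_\varepsilon$ from the support against $T_\varepsilon$ and the decay of $\partial K_\varepsilon$. Some bookkeeping in the choice of $B_\varepsilon$ and of the time window may be needed to reach exactly $\varepsilon^{5/2}/|\log\varepsilon|^3$.
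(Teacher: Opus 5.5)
Your proposal is correct and follows the paper's proof step by step. The shared steps are: the It\^o-isometry reduction of the variance difference to $\frac1N\int_0^t\E\langle u_r^-,|\partial_x p_{t-r}\ast\varphi|^2\rangle\,\mathd r$; the restriction to an $O(\varepsilon)$-interval next to the support of $u_0$ and to times $r\le\varepsilon^2/|\log\varepsilon|^2$; the splitting $u=p\ast u_0+v$ with $\E[v^-]=\tfrac12\E|v|$; and the H\"older bound $\E|v|\ge(\E v^2)^{3/2}/(\E v^4)^{1/2}$.

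For your Step~4, the paper avoids both the oscillation of the sinc tail and any logarithmic loss, in two ways. For the second moment, it bounds $|q^\varepsilon(r,z)|\gtrsim\varepsilon^{-2}$ only for $z\in[\tfrac{\varepsilon}{2\pi},\tfrac{\varepsilon}{\pi}]$, where every mode $\sin(2\pi kz)$ with $1\le k\le\varepsilon^{-1}$ is nonnegative; this gives $\E v_t(x)^2\gtrsim N^{-1}\varepsilon^{-3}t$. For the fourth moment, it combines BDG and Cauchy--Schwarz with $\|q^\varepsilon(r)\|_{L^2}\le\|\partial_xK_\varepsilon\|_{L^2}\lesssim\varepsilon^{-3/2}$ and $\sup_s\E\|u_s\|_\infty^2\lesssim1$ to get $\E v_t(x)^4\lesssim N^{-2}t^2\varepsilon^{-6}$. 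So no polylogarithmic factors appear, and the exponent $|\log\varepsilon|^{-3}$ comes out exactly.
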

\begin{remark}
    In particular, since $\exp(-c|\log \varepsilon|^2)$ decays faster than any monomial, we obtain
\[
     |\E[(\langle u_t,\varphi\rangle - \langle p_t \ast u_0, \varphi\rangle)^2] - \E[(\langle \mu_t, \varphi\rangle - \langle p_t \ast \mu_0, \varphi\rangle)^2]| \gtrsim \frac{e^{-4\pi^2 t}}{N^{3/2}}  \frac{\varepsilon^{5/2}}{|\log \varepsilon|^{3}} 
\]
as $N\to \infty$ whenever $\varepsilon = \varepsilon(N) \gtrsim N^{-a}$ for some $a>0$. This does not quite match the upper bound $O(N^{-3/2} \varepsilon^{-d/2})$ that Theorem~\ref{thm:main-nonlocal-testfunction} gives in the case $\rho_{\min} = 0$, but it shows that the weak error does not decay superpolynomially unless $\rho_{\min}>0$.

In this example it seems as if taking $\varepsilon$ relatively large, so that $(N^{-1/2} - \exp(-c|\log\varepsilon|^2))$ becomes negative, we might achieve a higher approximation rate. But that is due to the test function $\varphi = \sin(2\pi x)$ not having any Fourier modes with $|k|> 1$ and because we are considering only the second moment. For general test functions, in particular those of limited smoothness, or for higher moments or the Laplace transform, we need $\varepsilon>0$ sufficiently small to get a good approximation.
\end{remark}

\begin{proof}
    The processes $M^{\mu,t}_s = \langle \mu_s, p_{t-s}\ast \varphi\rangle - \langle \mu_0, p_t \ast \varphi\rangle$ and $M^{u,t}_s = \langle u_s, p_{t-s}\ast \varphi\rangle - \langle u_0, p_t \ast \varphi\rangle$, $s \in [0,t]$, are continuous martingales starting from $0$ and with quadratic variation
    \[
        \langle M^{\mu,t}\rangle_s = \frac1N \int_0^s \langle \mu_r, |\partial_x p_{t-r}\ast \varphi|^2\rangle \mathd r,\qquad \langle M^{u,t}\rangle_s = \frac1N \int_0^s \langle u^+_r, |K_\varepsilon\ast \partial_x p_{t-r}\ast \varphi|^2\rangle \mathd r,
    \]
    respectively. Therefore,
    \begin{align}\nonumber
        & N|\E[(\langle u_t,\varphi\rangle - \langle p_t \ast u_0, \varphi\rangle)^2] - \E[(\langle \mu_t, \varphi\rangle - \langle p_t \ast \mu_0, \varphi\rangle)^2]| \\
        &\hspace{40pt} = N|\E[(M^{u,t}_t)^2] - \E[(M^{\mu,t}_t)^2]| \nonumber \\
        &\hspace{40pt} = \left| \int_0^t \E[\langle u^+_r, |K_\varepsilon\ast \partial_x p_{t-r}\ast \varphi|^2\rangle] \mathd r - \int_0^t \E[\langle \mu_r, |\partial_x p_{t-r}\ast \varphi|^2\rangle] \mathd r\right| \nonumber \\
        & \hspace{40pt} = \int_0^t \E[\langle u^-_r, |\partial_x p_{t-r}\ast \varphi|^2\rangle] \mathd r, \label{eq:lower-bound-pr1}
    \end{align}
    where we used that $K_\varepsilon\ast \partial_x p_{t-r} \ast \varphi =  \partial_x p_{t-r} \ast \varphi$ because $\varphi$  has vanishing $k$-th Fourier modes for all $|k|>1$ and that $\chi(\varepsilon \cdot 1) = \chi(\varepsilon \cdot(-1))= 1$, and we also used that $u_r^+ = u_r + u_r^-$ and $\E[\langle u_r, f\rangle] = \E[\langle \mu_r,f\rangle] = \langle p_r\ast \rho_0, f\rangle$. We continue to estimate the right hand side as follows:
    \begin{align} \nonumber
        \int_0^t \E[\langle u^-_r, |\partial_x p_{t-r}\ast \varphi|^2\rangle] \mathd r & \geq \int_0^t \int_{B_\varepsilon} \E[u^-_r(x)] e^{-(2\pi)^2(t-r)} (2\pi)^2 |\cos(2\pi x)|^2 \mathd x \mathd r \\
        & \gtrsim e^{-4\pi^2 t}\int_0^t \int_{B_\varepsilon} \E[u^-_r(x)]  \mathd x \mathd r, \label{eq:lower-bound-pr2}
    \end{align}
    where $B_\varepsilon = [\tfrac{\varepsilon}{4\pi}, \tfrac{3\varepsilon}{4\pi}]$ and we used that $|\cos(2\pi x)|^2 \geq |\cos(3/2)|^2 >0$ for $x \in B_\varepsilon$. Now  Lemma~\ref{lem:lower-bound-Beps-int} gives for     $r \leq \frac{\varepsilon^2}{|\log \varepsilon|^2}$:
    \begin{equation}\label{eq:lower-bound-pr3}
        \int_{B_\varepsilon} \mathbb{E}[u_r^{-}(x)] \mathd x \gtrsim \varepsilon^{-1/2}  \sqrt{r}(N^{-1/2} - \exp(-c|\log \varepsilon|^2)).
    \end{equation} 
    Now \eqref{eq:lower-bound-pr1}, \eqref{eq:lower-bound-pr2} and \eqref{eq:lower-bound-pr3} together yield
    \begin{align*}
        & |\E[(\langle u_t,\varphi\rangle - \langle p_t \ast u_0, \varphi\rangle)^2] - \E[(\langle \mu_t, \varphi\rangle - \langle p_t \ast \mu_0, \varphi\rangle)^2]|\\
        & \hspace{30pt} \gtrsim \frac{e^{-4\pi^2 t}}{N}  \varepsilon^{-1/2}  \left(t \wedge \frac{\varepsilon^2}{|\log \varepsilon|^2}\right)^{3/2} (N^{-1/2} - \exp(-c|\log\varepsilon|^2))\\
        & \hspace{30pt} \gtrsim \frac{e^{-4\pi^2 t}}{N}  \frac{\varepsilon^{5/2}}{|\log \varepsilon|^{3}} (N^{-1/2} - \exp(-c|\log\varepsilon|^2)),
    \end{align*}
    assuming $t \geq \frac{\varepsilon^2}{|\log \varepsilon|^2}$ in the last step.   
\end{proof}

In order to derive the lower bound for $\E[u_t^-]$, recall that the mild solution $u$ to \eqref{eq:mild_formulation} satisfies $u_t = p_t \ast u_0 + v_t$. We start by deriving a lower bound on $\mathbb{E}[| v_t(x)|]$, for $x \in B_\varepsilon = [\frac{\varepsilon}{4 \pi}, \frac{3 \varepsilon}{4 \pi}]$. Writing
\begin{equation}
q^\varepsilon (t,x) := \nabla(K_\varepsilon \ast p_t)(x) = \nabla K_\varepsilon \ast p_t(x),
\end{equation}
we have
\begin{equation}
    v_t(x) =  \frac{1}{\sqrt{N}} \int_{[0, t] \times \mathbb{T}^d} q^\varepsilon(t-s,x-y)\cdot f(u_s^+(y)) \mathd W(s,y).
\end{equation}

\begin{lemma}
    Let $\tau>0$ and $x \in B_\varepsilon:= [\frac{\varepsilon}{4 \pi}, \frac{3 \varepsilon}{4 \pi}]$. There exists $C>0$ such that whenever $t \leq \tau (\frac{\varepsilon}{2 \pi})^2$, we have 
    \begin{equation}\label{eq:v-lower-bound}
         \mathbb{E}[|v_t(x)|]\gtrsim N^{-1/2} e^{-3\tau} \varepsilon^{-3/2}\sqrt{t}.
    \end{equation}
\end{lemma}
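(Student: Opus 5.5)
The plan is to lower-bound $\mathbb{E}[|v_t(x)|]$ by second and fourth moments. By Hölder,
\[
\mathbb{E}[v^2]\le \mathbb{E}[|v|]^{2/3}\,\mathbb{E}[v^4]^{1/3},
\qquad\text{hence}\qquad
\mathbb{E}[|v_t(x)|]\ge \frac{\mathbb{E}[v_t(x)^2]^{3/2}}{\mathbb{E}[v_t(x)^4]^{1/2}} .
\]
So it suffices to show two things. First, $\mathbb{E}[v_t(x)^2]\gtrsim N^{-1}e^{-2\tau}\varepsilon^{-3}t$ (the exponent of $e^{-\tau}$ is only schematic and will be adjusted). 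Second, $\mathbb{E}[v_t(x)^4]\lesssim N^{-2}\varepsilon^{-6}t^2$. Combining the two gives the claim with an exponential factor in $\tau$, which I would tune to $e^{-3\tau}$.

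\emph{Second moment.} By the Itô isometry,
\[
\mathbb{E}[v_t(x)^2]=\frac1N\int_0^t\!\!\int_{\mathbb T}|q^\varepsilon(t-s,x-y)|^2\,\mathbb{E}[u_s^+(y)]\,\mathd y\,\mathd s .
\]
Since $u^+\ge u$ and $\mathbb{E}[u_s]=p_s\ast u_0$, we may replace $\mathbb{E}[u_s^+(y)]$ by $p_s\ast\rho_0(y)$ from below. For $y=1-w$ with $w\in[0,\varepsilon/4]$ and $s\le t\lesssim\varepsilon^2$, we have $p_s\ast\rho_0(y)\gtrsim 1$, because $y$ lies at the edge of $[1/2,1]$ and at least half of the heat mass stays inside.

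The key computation is a pointwise lower bound on $q^\varepsilon$. With $\chi=\mathds 1_{[-1,1]}$, and up to the normalization of the Fourier transform,
\[
q^\varepsilon(r,z)=-\sum_{1\le k\le 1/\varepsilon}4\pi k\,\sin(2\pi k z)\,e^{-2\pi^2k^2 r}.
\]
For $z=x-y=x+w \pmod 1$ with $x\in B_\varepsilon$ and $w\in[0,\varepsilon/4]$, we get $z\in(0,\varepsilon/2)$. Hence $2\pi kz\in(0,\pi)$ for every $k\le 1/\varepsilon$, and all terms have the same sign. For $r\le t\le\tau(\varepsilon/2\pi)^2$ the damping factors satisfy $e^{-2\pi^2k^2r}\ge e^{-\tau/2}$.

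Keeping only $k\in[1/(4\varepsilon),1/(2\varepsilon)]$, where $\sin(2\pi kz)\gtrsim kz\gtrsim 1$ because $z\gtrsim\varepsilon$ on $B_\varepsilon$, gives $|q^\varepsilon(r,z)|\gtrsim e^{-\tau}\varepsilon^{-2}$. Integrating over $w$ in a set of length $\varepsilon/4$ and over $s\in[0,t]$ yields
\[
\mathbb{E}[v_t(x)^2]\gtrsim N^{-1}e^{-2\tau}\varepsilon^{-3}\,t .
\]
I expect this uniform-sign estimate to be the main obstacle. In particular, one has to verify that the lower bound on $\sin$ over the relevant $k$-range is compatible with $x\ge\varepsilon/(4\pi)$, and that the chosen constants place $z$ strictly inside $(0,\varepsilon/2)$. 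If this fails, the fallback is to compute $q^\varepsilon$ exactly via the derivative of the Dirichlet kernel.

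\emph{Fourth moment.} By BDG and Cauchy--Schwarz,
\[
\mathbb{E}[v_t(x)^4]\lesssim N^{-2}\,\mathbb{E}\Bigl[\Bigl(\int_0^t\|q^\varepsilon(t-s,\cdot)\|_{L^2}^2\,\|u_s\|_{L^\infty}\,\mathd s\Bigr)^2\Bigr].
\]
By Parseval, $\|q^\varepsilon(r,\cdot)\|_{L^2}^2\lesssim\sum_{|k|\le1/\varepsilon}k^2\lesssim\varepsilon^{-3}$. Moreover $\sup_s\mathbb{E}[\|u_s\|_{L^\infty}^2]\lesssim\rho_{\max}+\delta\rho_{\max}^{1/2}\lesssim1$ by Corollary~\ref{cor:vunif}, using $\rho_{\max}=2$ and $\delta\le\kappa\le1$. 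This gives $\mathbb{E}[v_t(x)^4]\lesssim N^{-2}\varepsilon^{-6}t^2$, and inserting both moment bounds into the Hölder inequality above yields \eqref{eq:v-lower-bound}.
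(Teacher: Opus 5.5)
Your proposal is correct and follows the paper's proof essentially step by step. It uses the same Hölder interpolation $\E|v|\ge \E[v^2]^{3/2}/\E[v^4]^{1/2}$, and the same It\^o isometry with $u^+\ge u$ and $p_s\ast u_0\gtrsim 1$ near the right edge of $[1/2,1]$. It also uses the same sign-definite Fourier lower bound $|q^\varepsilon|^2\gtrsim e^{-2\tau}\varepsilon^{-4}$ on a $y$-set of length $\sim\varepsilon$, and the same BDG, Cauchy--Schwarz, $\|q^\varepsilon(r)\|_{L^2}\lesssim\varepsilon^{-3/2}$ and Corollary~\ref{cor:vunif} argument for the fourth moment. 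The only difference is that you derive the bound on $q^\varepsilon$ inline with a slightly different window ($z\in[\varepsilon/(4\pi),\varepsilon/2)$, $k\in[1/(4\varepsilon),1/(2\varepsilon)]$) instead of invoking Lemma~\ref{lem:q-estimate}. With $e^{-2\tau}$ in the second moment, the factor $e^{-3\tau}$ comes out directly and no tuning is needed.
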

\begin{proof}
Write $v_t(x)=M_t^{t,x}$, where $(M^{t,x}_r)_{r \in [0,t]}$ is a continuous martingale with quadratic variation
\begin{equation}
  \langle M^{t,x} \rangle_r = \frac{1}{N}\int_0^r \int_\mathbb{T}  |q^\varepsilon(t-s,x-y)|^2 f(u_s^+(y))^2 \mathd y \mathd s.
\end{equation}
We bound $\mathbb{E}[|v_t(x)|]$ from below with H\"older's inequality,
$\mathbb{E}[Z^2]\le \mathbb{E}[|Z|]^{2/3}\,\mathbb{E}[Z^4]^{1/3}$ for all $Z \in L^4(\Omega)$, and therefore
\begin{equation}\label{eq:PZ}
    \mathbb{E}[|v_t(x)|]\geq \frac{\mathbb{E}[v_t(x)^2]^{3/2}}{\mathbb{E}[v_t(x)^4]^{1/2}}.
\end{equation}
Lower bound on the second moment: By It\^o's isometry, we have
\[
    \mathbb{E}[v_t(x)^2]=\mathbb{E}[\langle M^{t,x}\rangle_t]
    =\frac1N\int_0^t\int_\mathbb{T} |q^\varepsilon(t-s,x-y)|^2\,
    \mathbb{E}[u_s^+(y)]\,\mathd y\,\mathd s.
\]
Since $u_s^+\geq u_s$ pointwise and $\mathbb{E}[v_s]=0$, we have
$\mathbb{E}[u_s^+(y)]\geq \mathbb{E}[u_s(y)]=(p_s\ast u_0)(y)\geq 1$ for
$y\in[1/2,1]$. 
By Lemma~\ref{lem:q-estimate}, for
$t\leq \tau(\tfrac{\varepsilon}{2\pi})^2$ the bound
$|q^\varepsilon(t-s,x-y)|^2\geq C e^{-2\tau}\varepsilon^{-4}$ holds for
$y\in A_x:=\{y\in[1/2,1]:1+x-y\in[\tfrac{\varepsilon}{2\pi},\tfrac{\varepsilon}{\pi}]\}$,
and we have $|A_x|\geq\tfrac{\varepsilon}{4\pi}$. Hence
\begin{equation}\label{eq:2nd-moment}
    \mathbb{E}[v_t(x)^2]\geq \frac1N\int_0^t\int_{A_x}
    |q^\varepsilon(t-s,x-y)|^2\,\mathd y\,\mathd s
    \gtrsim \frac1N e^{-2\tau}\varepsilon^{-4}\,|A_x|\,t
    \gtrsim N^{-1}e^{-2\tau}\varepsilon^{-3}t.
\end{equation}
Upper bound on the fourth moment: By the Burkholder--Davis--Gundy inequality and then the Cauchy-Schwarz inequality
\begin{align*}
    \mathbb{E}[v_t(x)^4] & \lesssim \mathbb{E}\big[\langle M^{t,x}\rangle_t^2\big] \leq \frac{\int_0^t \|q^\varepsilon(t-s)\|_{L^2}^2 \mathd s}{N^2} \int_0^t\E[\|u_s^+\|_\infty^2] \|q^\varepsilon(t-s)\|_{L^2}^2 \mathd s.
\end{align*}
Using $\sup_{s\geq0}\mathbb{E}[\|u_s\|_\infty^2]\lesssim 1$, which follows from
Corollary~\ref{cor:vunif} and $\|p_s\ast u_0\|_\infty\leq\|u_0\|_\infty=2$,
together with $\|q^\varepsilon(r)\|_{L^2}\leq\|\partial_x K_\varepsilon\|_{L^2}
\lesssim\varepsilon^{-3/2}$, we obtain
\begin{equation}\label{eq:4th-moment}
    \mathbb{E}[v_t(x)^4]\lesssim N^{-2}t^2\varepsilon^{-6}.
\end{equation}
Combining \eqref{eq:PZ}, \eqref{eq:2nd-moment} and
\eqref{eq:4th-moment},
\[
    \mathbb{E}[|v_t(x)|]\geq
    \frac{\mathbb{E}[v_t(x)^2]^{3/2}}{\mathbb{E}[v_t(x)^4]^{1/2}}
    \gtrsim \frac{(N^{-1}e^{-2\tau}\varepsilon^{-3}t)^{3/2}}
    {(N^{-2}t^2\varepsilon^{-6})^{1/2}}
    = N^{-1/2}e^{-3\tau}\varepsilon^{-3/2}\sqrt{t}. \qedhere
\]
\end{proof}


\begin{lemma} \label{lem:lower-bound-Beps-int} 
Let $B_\varepsilon:= [\frac{\varepsilon}{4 \pi}, \frac{3 \varepsilon}{4 \pi}]$. There exists a constant $c >0$ such that if $0 < t \leq \frac{\varepsilon^2}{|\log \varepsilon|^2}$
\begin{equation}
\mathbb{E}\left[\int_{B_\varepsilon} u_t^{-}(x) \mathd x\right] \gtrsim \varepsilon^{-1/2}  \sqrt{t}(N^{-1/2} - \exp(-c|\log \varepsilon|^2)).
\end{equation} 
\end{lemma}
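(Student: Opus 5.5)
The plan is to reduce the lower bound on the negative part to the lower bound on $\E[|v_t(x)|]$ from the preceding lemma. We use that the deterministic part $p_t\ast u_0$ is tiny on $B_\varepsilon$ for $t\le \varepsilon^2/|\log\varepsilon|^2$, and that $v_t(x)$ is centered.

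The first step is the elementary identity $u^- = \tfrac12(|u|-u)$. Together with $\E[u_t(x)] = p_t\ast u_0(x)$, it gives
\[
\E[u_t^-(x)] = \tfrac12\bigl(\E[|p_t\ast u_0(x) + v_t(x)|] - p_t\ast u_0(x)\bigr) \geq \tfrac12\E[|v_t(x)|] - p_t\ast u_0(x),
\]
where we used $|a+b|\geq |b|-|a|$ and $p_t\ast u_0\geq 0$. This reduces the problem to two estimates: a lower bound on $\E|v_t(x)|$ and an upper bound on $p_t\ast u_0$ on $B_\varepsilon$.

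For the second estimate, every $x\in B_\varepsilon=[\tfrac{\varepsilon}{4\pi},\tfrac{3\varepsilon}{4\pi}]$ has distance at least $\tfrac{\varepsilon}{4\pi}$ from $\operatorname{supp}\rho_0=[1/2,1]$ on the torus. A standard Gaussian tail bound for the periodic heat kernel generated by $\tfrac12\Delta$ then gives
\[
p_t\ast u_0(x)\le 2\,\mathbb P\Bigl(|B_t| \geq \tfrac{\varepsilon}{4\pi}\Bigr)\lesssim \exp\Bigl(-\frac{\varepsilon^2}{32\pi^2 t}\Bigr),
\]
where the periodic images only contribute a harmless constant. For $t\le \varepsilon^2/|\log\varepsilon|^2$ this is at most $\exp(-c|\log\varepsilon|^2)$.

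For the first estimate, I apply the preceding lemma with a fixed $\tau$, say $\tau=1$. Its hypothesis $t\le (\tfrac{\varepsilon}{2\pi})^2$ holds once $|\log\varepsilon|\geq 2\pi$. For $\varepsilon$ closer to $1$, I would either take $\tau$ large but fixed, so that the factor $e^{-3\tau}$ is absorbed into the implicit constant, or note that the right-hand side can be made nonpositive by shrinking $c$. The lemma gives $\E|v_t(x)|\gtrsim N^{-1/2}\varepsilon^{-3/2}\sqrt t$. Integrating over $B_\varepsilon$, which has length $\varepsilon/(2\pi)$, yields
\[
\int_{B_\varepsilon}\E[u_t^-]\,\mathd x \gtrsim \varepsilon^{-1/2}\sqrt t\, N^{-1/2} - \varepsilon\exp(-c|\log\varepsilon|^2).
\]

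The main obstacle is putting the error term into the stated form $\varepsilon^{-1/2}\sqrt t\,\exp(-c|\log\varepsilon|^2)$, since the raw error $\varepsilon e^{-\varepsilon^2/(32\pi^2t)}$ is not multiplied by $\sqrt t$. I would handle this using the full exponential in $t$: for small $t$ the bound $\exp(-\varepsilon^2/(32\pi^2 t))$ is far smaller than any power of $t/\varepsilon^2$. Set $s=\varepsilon^2/t\ge|\log\varepsilon|^2$. Then
\[
\varepsilon e^{-s/(32\pi^2)} = \varepsilon^{-1/2}\sqrt t\cdot \varepsilon\, s^{1/2}\, e^{-s/(32\pi^2)}.
\]
The function $s^{1/2}e^{-s/(64\pi^2)}$ is bounded, so the error is at most $\varepsilon^{-1/2}\sqrt t\, e^{-s/(64\pi^2)}\le \varepsilon^{-1/2}\sqrt t\,\exp(-c|\log\varepsilon|^2)$ with $c=1/(64\pi^2)$. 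This gives the claim. The only remaining care is to check that the torus-periodic heat kernel tail bound holds uniformly for $t\le 1$, which is routine.
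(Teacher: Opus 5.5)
Your proposal is correct and follows the paper's proof essentially step for step. You use the same reduction $\E[u_t^-(x)]\geq \frac12\E[|v_t(x)|]-p_t\ast u_0(x)$ via centeredness of $v_t(x)$, the preceding lemma for $\E[|v_t(x)|]$ integrated over $B_\varepsilon$, and a Gaussian upper bound on $p_t\ast u_0$ on $B_\varepsilon$; your tail bound for $|B_t|$ avoids the paper's extra $t^{-1/2}$ prefactor, which makes the final absorption slightly cleaner.

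Two small remarks. First, the correct factorization is $\varepsilon e^{-s/(32\pi^2)}=\varepsilon^{-1/2}\sqrt t\cdot\varepsilon^{1/2}s^{1/2}e^{-s/(32\pi^2)}$, not $\varepsilon\, s^{1/2}$; this is harmless since $\varepsilon^{1/2}\leq 1$. Second, for $\varepsilon$ arbitrarily close to $1$ only your second option works, because no fixed $\tau$ covers $t\leq\varepsilon^2/|\log\varepsilon|^2$ uniformly as $\varepsilon\to1$. That option uses $\kappa\leq 1$, which forces $N\geq 2$, so that $N^{-1/2}\leq\exp(-c|\log\varepsilon|^2)$ on that range once $c$ is small.
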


\begin{proof}
    Note that $p_t \ast u_0 \geq 0$ so that, since $(a+z)^{-} \geq z^- - a$ for $a \geq 0$,
\begin{align*}
\mathbb{E}\left[\int_{B_\varepsilon} u_t^-(x) \mathd x \right] 
    &\geq \int_{B_\varepsilon}  \mathbb{E}[v_t(x)^-]\mathd x - \int_{B_\varepsilon}  (p_t \ast u_0)(x)\mathd x  \\
    &= \frac{1}{2}\int_{B_\varepsilon}  \mathbb{E}[|v_t(x)|]\mathd x - \int_{B_\varepsilon}  (p_t \ast u_0)(x)\mathd x,
\end{align*}
where in the second step we used that for each fixed $x$, $v_t(x)$ is centered because it is given by a stochastic integral, which implies $\mathbb{E}[v_t(x)^-] = \frac{1}{2} \mathbb{E}[|v_t(x)|]$.

Using the lower bound~\eqref{eq:v-lower-bound} for $\E[|v_t(x)|]$, for $x \in B_\varepsilon$ and $t \leq \tau (\varepsilon/2\pi)^2$,  we get
\[
\frac{1}{2}\int_{B_\varepsilon}  \mathbb{E}[|v_t(x)|]\mathd x \gtrsim |B_\varepsilon| N^{-1/2} e^{-3\tau} \varepsilon^{-3/2} \sqrt{t} \gtrsim  N^{-1/2} e^{-3\tau} \varepsilon^{-1/2} \sqrt{t},
\]
where we have used that $|B_\varepsilon| = \varepsilon/(2 \pi)$.

It remains to estimate the deterministic contribution. The following heat kernel estimate holds on $\mathbb{T}$ and locally uniformly in time:
\begin{equation}\label{heat_estimate}
    p_t(x) \leq C t^{-1/2} \exp \left(-\frac{d_{\mathbb{T}}(x,0)^2}{2t}\right),
\end{equation}
which follows from the explicit expression $p_t(x) = \sum_{k \in \Z} (2\pi t)^{-1/2} \exp\left(-\frac{(x-k)^2}{2t}\right)$.
Since $(p_t \ast u_0)(x) = 2 \int_{\frac12}^1p_t(x-y)\mathd y$, and  the support $[1/2,1]$ of $u_0$ is at torus distance
$\gtrsim \varepsilon$ from $B_\varepsilon$, we deduce that
\begin{equation*}
    (p_t \ast u_0) (x) \lesssim \frac{1}{\sqrt{t}}\exp \left( - C\frac{\varepsilon^2}{2t}\right), \qquad x \in B_\varepsilon = \left[\frac{\varepsilon}{4 \pi}, \frac{3 \varepsilon}{4 \pi}\right].
\end{equation*}
Combining this with the stochastic estimate we obtain
\begin{align*}
\mathbb{E}\left[\int_{B_\varepsilon}u_t^-(x)\mathd x\right] &\geq \frac{1}{2}\int_{B_\varepsilon} \mathbb{E}[|v_t(x)|]dx - \int_{B_\varepsilon} (p_t \ast u_0)(x) dx \\
    &\gtrsim |B_\varepsilon| N^{-1/2} e^{-3\tau}\varepsilon^{-3/2}\sqrt{t}    - |B_\varepsilon| t^{-1/2} \exp(-C \frac{\varepsilon^2}{t}) \\
    &\gtrsim N^{-1/2} e^{-3\tau} \varepsilon^{-1/2} \sqrt{t}   -(\varepsilon^{-1/2} \sqrt{t}) (\frac{\varepsilon}{t} \exp(-C \frac{\varepsilon^2}{t})).
\end{align*}
For 
$ 
t \leq \frac{\varepsilon^2}{|\log \varepsilon|^2}
$ 
we obtain the claimed bound, because then we can take $\tau \lesssim 1/(|\log \varepsilon|^2) \lesssim 1$, and we have $(\frac{\varepsilon}{t} \exp(-C \frac{\varepsilon^2}{t})) \leq \varepsilon \exp(-\tfrac{C}{2}\frac{\varepsilon^2}{t}) \leq \varepsilon \exp(-c|\log\varepsilon|^2)$.
\end{proof}

\section{Upper bounds for the negative part of $u$}\label{sec:negative}

In this section, we consider a general probability density $u_0 \in L^\infty(\T^d)$ as initial condition for $u$, and we treat $u_0$ as deterministic.
If $u_0$ is random, all the expectations and probabilities should be understood conditionally on $u_0$.

Recall that $v_t = u_t - p_t \ast u_0$. Since 
$p_t \ast u_0$ preserves pointwise the lower bound of the initial condition, 
any negative values of $u_t$ must come from $v_t$. This lets us control the negative part of $u_t$ through the size of $v_t$, which is captured by the following lemma. 

\begin{lemma}
  \label{lem:negative-u-moments}Let $u_{\min} := \min_x u_0 (x) \geq 0$. Then, for every $r > 0$ and $p \in [1,\infty]$, 
the following estimate holds
   \[ \mathbb{E} [\| u^-_t \|_{L^p}^r] \leq \mathbb{E} [\| v_t \|_{L^p}^{2
     r}]^{1 / 2} \mathbb{P} (\| v_t \|_{\infty} > u_{\min})^{1 / 2} . \]
\end{lemma}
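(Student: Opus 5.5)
The plan is a pointwise comparison followed by Cauchy--Schwarz. Write $u_t = p_t \ast u_0 + v_t$ as in~\eqref{eq:v-def}. Since $p_t$ is a probability density, $p_t \ast u_0 \geq u_{\min} \geq 0$ pointwise. Hence, at every $x$,
\[
u_t^-(x) = \bigl(p_t\ast u_0(x) + v_t(x)\bigr)^- \leq \bigl(u_{\min} + v_t(x)\bigr)^- \leq v_t(x)^- \leq |v_t(x)|,
\]
because $a\mapsto (a+z)^-$ is nonincreasing in $a$. This already gives $\|u_t^-\|_{L^p} \leq \|v_t\|_{L^p}$ for every $p\in[1,\infty]$.

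Next, observe that $u_t^-$ vanishes identically on the event $\{\|v_t\|_\infty \leq u_{\min}\}$. On that event $u_t(x) \geq u_{\min} - |v_t(x)| \geq 0$ for every $x$. Therefore
\[
\|u_t^-\|_{L^p}^r \leq \|v_t\|_{L^p}^r \,\mathds{1}_{\{\|v_t\|_\infty > u_{\min}\}}
\]
almost surely. Taking expectations and applying the Cauchy--Schwarz inequality to the product gives $\mathbb{E}[\|v_t\|_{L^p}^{2r}]^{1/2}\,\mathbb{P}(\|v_t\|_\infty > u_{\min})^{1/2}$, which is the claim.

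There is no real obstacle. The only point needing care is that $u_t$ and $v_t$ are only $L^2$-valued, so statements such as $\|v_t\|_\infty$ are understood as essential suprema, and $u_{\min}$ should be read as an essential infimum. Both inequalities above hold almost everywhere in $x$, so the norm comparisons survive. If $\|v_t\|_\infty = \infty$ with positive probability, the right-hand side is infinite and the bound is trivial. If $u_{\min} = 0$, the indicator is simply $\mathds{1}_{\{v_t \not\equiv 0\}}$ and the argument is unchanged.
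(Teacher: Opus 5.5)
Your proposal is correct and follows essentially the same route as the paper. The paper also combines $p_t\ast u_0\geq u_{\min}$ with the monotonicity of $x\mapsto x^-$ to get the pointwise bound $u_t^-\leq (u_{\min}+v_t)^-\leq v_t^-\,\mathds{1}_{\{\|v_t\|_\infty>u_{\min}\}}$, then takes $L^p$ norms, raises them to the $r$-th power, and applies Cauchy--Schwarz; your bounding of $v_t^-$ by $|v_t|$ is only a cosmetic difference.
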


\begin{proof}
    Since the mapping $x \mapsto x^-:=\max\{-x,0\}$ is monotone decreasing, 
  and since the positivity of $p_t$ implies the uniform lower bound $p_t \ast u_0 (x)\geq u_{\min}$ for all $(t,x) \in \mathbb R_+ \times \mathbb{T}^d$,
  we have 
  \begin{equation}\label{estimate:neg_u}
       u_t^- =(p_t \ast u_0 + v_t)^- \leq  (u_{\min} + v_t)^- \leq v_t^- \mathds{1}_{\{\|v_t\|_\infty > u_{\min}\}},
  \end{equation}
  where in the last step we used that $u_{\min}\ge 0$ and $(\cdot)^-$ is decreasing, and that $(u_{\min} + v_t)^- \equiv 0$ for $\| v_t \|_{\infty} \leq u_{\min}$, since $u_{\min} + v_t \geq u_{\min} - \| v_t \|_{\infty}$.
Now the claim follows by taking the $L^p$ norm on both sides of \eqref{estimate:neg_u}, raising both sides to the $r$-th power, taking the expectation and then applying the Cauchy-Schwarz inequality to upper bound the right hand side.
\end{proof}

Thus, to control the moments of $u^-$, it remains to estimate the moments of $\|v_t\|_{L^p}$ and the tail probability $\mathbb{P} (\| v_t \|_{\infty} > u_{\min}) $. Recall that by Assumption~\ref{ass:kappa} we have $\delta \le \kappa \leq 1$.

\begin{lemma}
  \label{lem:v-Lebesgue-moments}
For every $p \in [2, \infty)$, we have
\[ 
\sup_{t \geq 0}\mathbb{E} \left[\| v_t \|_{L^p}^p \right]^{1 / p} \lesssim \sqrt{p}\delta \|u_0\|^{1/2}_{L^{p/2}}+p\delta^2\lesssim_p \delta \| u_0 \|_{L^{p /
     2}}^{1 / 2} ,
     \]
where the implicit constant in the first inequality is independent of $p$.
\end{lemma}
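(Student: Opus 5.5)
We bound $v_t(x)$ pointwise with the Burkholder--Davis--Gundy inequality, integrate over $x$, and close the resulting estimate with a self-referential moment bound. Fix $t\ge 0$ and $x\in\T^d$, and write $v_t(x)=M^{t,x}_t$. Here $M^{t,x}_r=\frac{1}{\sqrt N}\int_0^r\int q^\varepsilon(t-s,x-y)\cdot f(u_s^+(y))\,\mathd W(s,y)$ is a continuous martingale, where $q^\varepsilon(r)=\nabla K_\varepsilon\ast p_r$ as in Section~\ref{sec:lower_bound}, now on $\T^d$. BDG with the sharp constant $\lesssim\sqrt p$ gives
\[
\E[|v_t(x)|^p]^{1/p}\lesssim \sqrt p\,\E\Bigl[\Bigl(\frac1N\int_0^t\int |q^\varepsilon(t-s,x-y)|^2 f(u_s^+(y))^2\,\mathd y\,\mathd s\Bigr)^{p/2}\Bigr]^{1/p}.
\]
We use $f(z)^2\lesssim z$ and $u_s^+\le |p_s\ast u_0|+|v_s|$. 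Integrating in $x$ and applying Minkowski's inequality in $L^{p/2}(\Omega\times\T^d)$ (Young's convolution inequality in space, with kernel mass $\|q^\varepsilon(t-s)\|_{L^2}^2$) yields
\[
\E\|v_t\|_{L^p}^p{}^{2/p}\lesssim \frac{p}{N}\int_0^t\|q^\varepsilon(t-s)\|_{L^2}^2\Bigl(\|p_s\ast u_0\|_{L^{p/2}}+\E\|v_s\|_{L^{p/2}}^{p/2}{}^{2/p}\Bigr)\mathd s .
\]

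The key deterministic input is the kernel integral $\int_0^\infty \|q^\varepsilon(r)\|_{L^2}^2\,\mathd r\lesssim \varepsilon^{-d}$. By Parseval,
\[
\|q^\varepsilon(r)\|_{L^2}^2=\sum_{k\neq0}|2\pi k|^2\chi(\varepsilon k)^2e^{-4\pi^2|k|^2r}.
\]
Integrating in $r$ removes the factor $|k|^2$ and leaves $\sum_k\chi(\varepsilon k)^2/2\lesssim\varepsilon^{-d}$, uniformly in $t$. The $k=0$ mode vanishes, so no growth in $t$ occurs; this is where uniformity in time comes from. Combined with $\|p_s\ast u_0\|_{L^{p/2}}\le\|u_0\|_{L^{p/2}}$ and $\delta^2=\varepsilon^{-d}N^{-1}$, this gives, with $A_p:=\sup_{s}\E\|v_s\|_{L^p}^p{}^{1/p}$,
\[
A_p^2\lesssim p\,\delta^2\bigl(\|u_0\|_{L^{p/2}}+A_{p/2}\bigr)\le p\,\delta^2\bigl(\|u_0\|_{L^{p/2}}+A_p\bigr),
\]
where the last step uses Jensen on the probability space $\T^d$. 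Before taking the supremum over time, we first work with the solution stopped or localized so that $A_p<\infty$; Proposition~\ref{prop:wp} gives $L^2$ integrability, and higher moments can be obtained through a stopping-time argument or via the Galerkin structure, since $u$ has finitely many relevant Fourier modes.

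To close, the quadratic inequality $A^2\le C p\delta^2 a+Cp\delta^2 A$ implies $A\lesssim \sqrt{p}\,\delta\, a^{1/2}+p\delta^2$. This is the first claimed bound with a $p$-independent constant. For the second bound we use $\|u_0\|_{L^{p/2}}\ge\|u_0\|_{L^1}=1$ (the initial datum is a probability density) and $\delta\le 1$ by Assumption~\ref{ass:kappa}, so $p\delta^2\le p\,\delta\|u_0\|_{L^{p/2}}^{1/2}$.

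The main obstacle is making the self-referential step rigorous. This requires a priori finiteness of $\sup_t\E\|v_t\|_{L^p}^p$ uniformly in time. We plan to first prove the bound on $[0,T]$ with a localization (stopping at $\tau_R=\inf\{t:\|u_t\|_{L^2}>R\}$, where the same estimates apply to the stopped integrand), then let $R\to\infty$ by Fatou, and note that the constants never depend on $T$.
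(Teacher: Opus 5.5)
Your proposal is correct and follows essentially the same route as the paper. Both arguments use BDG for the frozen-$t$ martingale, the bound $f(u_s^+)^2\lesssim |p_s\ast u_0|+|v_s|$, and a Minkowski step. You pair that step with Young's convolution inequality, where the paper uses Jensen with respect to the normalized measure $|q^\varepsilon|^2\,\mathd y\,\mathd s$. Your Parseval bound $\int_0^\infty\|q^\varepsilon(r)\|_{L^2}^2\,\mathd r\lesssim\varepsilon^{-d}$ is the paper's energy estimate, and the absorption via $A_{p/2}\le A_p$ is also the same; your explicit stopping-time localization guaranteeing $A_p<\infty$ before absorbing is a step the paper leaves implicit, and it is a worthwhile addition.
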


\begin{proof}
  Although the stochastic  convolution is  not a martingale in $t$, 
  applying the Burkholder--Davis--Gundy inequality  to
  the martingale 
  $$ r \mapsto\frac{1}{\sqrt{N}} \int_{[0, r] \times \mathbb{T}^d}
  q^{\varepsilon} (t - s, x - y) f (u_s^+(y)) \cdot \mathd W (s, y)
  $$
  on $[0,t]$, i.e. freezing the $t$ in $q^{\varepsilon} (t - \cdot)$, gives
  \begin{align*}
    \sqrt{N} \mathbb{E} [| v_t (x) |^p]^{1 / p} & \simeq \sqrt{p} \mathbb{E}
    \left[ \left( \int_0^t \int_{\mathbb{T}^d} | q^{\varepsilon} (t - s, x -
    y) f (u_s^+(y)) |^2 \mathd y \mathd s \right)^{p / 2} \right]^{1 / p}\\
    & \lesssim \sqrt{p} \mathbb{E} \left[ \left( \int_0^t
    \int_{\mathbb{T}^d} | q^{\varepsilon} (t - s, x - y) |^2 (| p_s \ast u_0
    (y) | + | v_s (y) |) \mathd y \mathd s \right)^{p / 2} \right]^{1 / p}\\
    & \leq \sqrt{p} \left( \int_0^t \int_{\mathbb{T}^d} |
    q^{\varepsilon} (t - s, x - y) |^2 (| p_s \ast u_0 (y) | + \| v_s (y)
    \|_{L^{p / 2} (\Omega)}) \mathd y \mathd s \right)^{1 / 2}.
  \end{align*}
  Here we used that $f (u^+)^2 \lesssim u^+ \leq | u |$, and we applied
  Minkowski's inequality, where we need $p \geq 2$, to pull the $\|
  \cdot \|_{L^{p / 2} (\Omega)}$ norm inside the integration. 
  
  We raise both
  sides to the $p$-th power and integrate in $x$, to obtain
  \begin{align*}
    & N^{p / 2} \mathbb{E} [\| v_t \|_{L^p}^p] \\
    & \lesssim p^{p / 2} \int_{\mathbb{T}^d} \left( \int_0^t
    \int_{\mathbb{T}^d} \frac{| q^{\varepsilon} (t - s, x - y) |^2}{\| |
    q^{\varepsilon} |^2 \|_{L^1 ([0, t] \times \mathbb{T}^d)}} (| p_s \ast u_0
    (y) | + \| v_s (y) \|_{L^{p / 2} (\Omega)}) \| | q^{\varepsilon} |^2
    \|_{L^1 ([0, t] \times \mathbb{T}^d)} \mathd y \mathd s \right)^{p / 2}
    \mathd x\\
    & \leq p^{p / 2} \int_{\mathbb{T}^d} \left( \int_0^t
    \int_{\mathbb{T}^d} \frac{| q^{\varepsilon} (t - s, x - y) |^2}{\| |
    q^{\varepsilon} |^2 \|_{L^1 ([0, t] \times \mathbb{T}^d)}} (| p_s \ast u_0
    (y) | + \| v_s (y) \|_{L^{p / 2} (\Omega)})^{p / 2} \| | q^{\varepsilon}
    |^2 \|_{L^1 ([0, t] \times \mathbb{T}^d)}^{p / 2} \mathd y \mathd s
    \right) \mathd x .
 \end{align*}
In the last step we used Jensen's inequality with respect to the normalised measure proportional to $| q^{\varepsilon} (t - s, x - y) |^2\mathd y \mathd s$. Hence, 
    \begin{align*}
     N^{p / 2} \mathbb{E} [\| v_t \|_{L^p}^p]
     & \lesssim p^{p / 2} \| | q^{\varepsilon} |^2 \|_{L^1 ([0, t] \times
    \mathbb{T}^d)}^{p / 2} \sup_{s \leq t} \left( \int_{\mathbb{T}^d} (|
    p_s \ast u_0 (y) | + \| v_s (y) \|_{L^{p / 2} (\Omega)})^{p / 2} \mathd y
    \right)\\
    & \leq p^{p / 2} \| q^{\varepsilon} \|_{L^2 ([0, t] \times
    \mathbb{T}^d)}^p (\| u_0 \|_{L^{p / 2} (\mathbb{T}^d)}^{1 / 2} + \sup_{s
    \leq t} \| v_s \|_{L^{p / 2} (\Omega \times \mathbb{T}^d)}^{1 / 2})^p.
  \end{align*} 
  Now write $M_t^{(p)} \assign \sup_{s \leq t} \| v_s \|_{L^p (\Omega \times
     \mathbb{T}^d)}$. Since $\Omega \times \mathbb{T}^d$  is a probability space, $M_t^{(p / 2)} \leq M_t^{(p)}$. Therefore,
  \begin{align*}
    M^{(p)}_t & \leq C \frac{\sqrt{p}}{\sqrt{N}} \| q^{\varepsilon}
    \|_{L^2 ([0, t] \times \mathbb{T}^d)} \left( \| u_0 \|_{L^{p / 2}
    (\mathbb{T}^d)}^{1 / 2} + \sqrt{M^{(p)}_t} \right)\\
    & \leq \frac{C \sqrt{p}}{\sqrt{N}} \| q^{\varepsilon} \|_{L^2 ([0,
    t] \times \mathbb{T}^d)} \| u_0 \|_{L^{p / 2} (\mathbb{T}^d)}^{1 / 2} +
    C^2 \frac{p}{2 N} \| q^{\varepsilon} \|_{L^2 ([0, t] \times
    \mathbb{T}^d)}^2 + \frac{1}{2} M^{(p)}_t,
  \end{align*}
  and then
  \[ 
  M^{(p)}_t \lesssim \frac{\sqrt{p}}{\sqrt{N}} \| q^{\varepsilon} \|_{L^2
     ([0, t] \times \mathbb{T}^d)} \| u_0 \|_{L^{p / 2} (\mathbb{T}^d)}^{1 /
     2} + \frac{p}{N} \| q^{\varepsilon} \|_{L^2 ([0, t] \times
     \mathbb{T}^d)}^2 . 
  \]
  Since $\delta = N^{- 1 / 2} \varepsilon^{- d / 2}$, 
  it remains to show $\| q^{\varepsilon} \|_{L^2 ([0, t] \times \mathbb{T}^d)}
  \lesssim \varepsilon^{- d / 2}$.  Using an idea from \cite{Cornalba2023}, we observe that $(t,x)\mapsto p_t \ast K_{\varepsilon}(x)$ solves the heat
  equation with initial condition $K_{\varepsilon}$, so by energy estimates
  for the heat equation:
  \[ \| q^{\varepsilon} \|_{L^2 ([0, t] \times \mathbb{T}^d)} = \| \nabla p
     \ast K_{\varepsilon} \|_{L^2 ([0, t] \times \mathbb{T}^d)} \lesssim \|
     K_{\varepsilon} \|_{L^2 (\mathbb{T}^d)} \lesssim \varepsilon^{- d /
     2}, \]
   uniformly in $t$.
   This gives
   \[
   \sup_{t \geq 0}\mathbb{E}[\|v_t\|^p_{L^p}]^{1/p} \lesssim\sqrt{p}\delta \|u_0\|^{1/2}_{L^{p/2}}+p\delta^2,
   \]
   and since $\delta \leq 1$ and $u_0$ is a probability density on the unit torus and thus $\|u_0\|^{1/2}_{L^{p/2}} \gtrsim 1$
   \[
   \sqrt{p}\delta \|u_0\|^{1/2}_{L^{p/2}}+p\delta^2 \lesssim_p \delta \|u_0\|^{1/2}_{L^{p/2}},
   \]
   which completes the proof. 
\end{proof}

\begin{corollary}\label{cor:vunif}
  For all $p \in [2,\infty)$, the uniform norm of $v$ is controlled uniformly in time by
  \begin{equation}
    \sup_{t \geq 0} \mathbb{E} [\| v_t \|^p_{\infty}]^{1 / p} \lesssim \left(\sqrt{p}  \kappa \| u_0 \|_{\infty}^{1 /
    2} + p \kappa^2 \right) \lesssim_p \kappa \| u_0
     \|_{\infty}^{1 / 2}, \label{eq:moment-bound-uniform}
  \end{equation}
  where the implicit constant in the first inequality is independent of $p$.
\end{corollary}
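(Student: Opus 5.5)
The idea is to upgrade the pointwise moment bound from the proof of Lemma~\ref{lem:v-Lebesgue-moments} to a sup-norm bound using the spectral localization of $v_t$. Because of the convolution with $K_\varepsilon$, the Fourier transform of $v_t$ is supported in a ball of radius $O(\varepsilon^{-1})$. Hence $v_t$ is a trigonometric polynomial with frequencies $|k|\lesssim \varepsilon^{-1}$, and a Bernstein-type inequality gives
\[
\|v_t\|_\infty \lesssim \varepsilon^{-d/q}\|v_t\|_{L^q}\qquad\text{for any } q\geq 1,
\]
with an implicit constant independent of $q$. For instance, write $v_t=\tilde K_\varepsilon\ast v_t$ with $\tilde K_\varepsilon = \mathcal F^{-1}(\tilde\chi(\varepsilon\cdot))$, where $\tilde\chi\equiv1$ on the support of $\chi$. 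Young's inequality and $\|\tilde K_\varepsilon\|_{L^{q'}}\lesssim \varepsilon^{-d/q}$ then give the claim.

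\textbf{Step 1: pointwise moments.} From the proof of Lemma~\ref{lem:v-Lebesgue-moments} we already have, for $q\geq p\geq 2$,
\[
\sup_t\mathbb E[\|v_t\|_{L^q}^q]^{1/q}\lesssim \sqrt q\,\delta\|u_0\|_{L^{q/2}}^{1/2}+q\delta^2 \leq \sqrt q\,\delta\|u_0\|_\infty^{1/2}+q\delta^2 ,
\]
and the implicit constant there is uniform in $q$.

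\textbf{Step 2: sup norm.} For $q\geq p$ we combine the Bernstein inequality with Jensen's inequality in $\Omega$:
\[
\mathbb E[\|v_t\|_\infty^p]^{1/p}\leq \mathbb E[\|v_t\|_\infty^q]^{1/q}\lesssim \varepsilon^{-d/q}\bigl(\sqrt q\,\delta\|u_0\|_\infty^{1/2}+q\delta^2\bigr).
\]

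\textbf{Step 3: choice of $q$.} We take $q = p\vee d\log(1/\varepsilon)$ (or $q=p\vee 1$ if $\varepsilon$ is not small). Then $\varepsilon^{-d/q}\leq e$, so the right-hand side is at most a constant times $\sqrt p\,\delta\sqrt{1\vee d\log(1/\varepsilon)}\,\|u_0\|_\infty^{1/2}$ plus a constant times $p\,\delta^2(1\vee d\log(1/\varepsilon))$. This follows from $\sqrt{a\vee b}\leq\sqrt a\sqrt{1\vee b}$ for $a\geq1$, and likewise for the linear term. By the definition $\kappa=\delta\sqrt{1\vee d\log(1/\varepsilon)}$, this is exactly $\sqrt p\,\kappa\|u_0\|_\infty^{1/2}+p\kappa^2$.

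\textbf{Step 4: the second inequality.} It follows from $\kappa\leq1$ (Assumption~\ref{ass:kappa}) and $\|u_0\|_\infty\geq 1$, which holds because $u_0$ is a probability density on the unit torus. Hence $p\kappa^2\leq p\kappa\|u_0\|_\infty^{1/2}$.

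The main obstacle is to make sure that the constant in the $L^q$ moment bound of Lemma~\ref{lem:v-Lebesgue-moments} is genuinely independent of $q$. This requires the BDG constant to be of order $\sqrt q$ and the absorption argument there to be uniform, which the lemma asserts. A second point to check is that the Bernstein constant does not depend on $q$; this holds because $\|\tilde K_\varepsilon\|_{L^{q'}}\leq\|\tilde K_\varepsilon\|_{L^1}^{1/q'}\|\tilde K_\varepsilon\|_{L^\infty}^{1/q}$ by interpolation.
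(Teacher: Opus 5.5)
Your proposal is correct and follows essentially the same route as the paper: Bernstein's inequality at cost $\varepsilon^{-d/q}$, the $q$-uniform moment bound of Lemma~\ref{lem:v-Lebesgue-moments}, and monotonicity of moments to raise the exponent up to $p_0 = d\log(1/\varepsilon)$. Your single choice $q = p \vee d\log(1/\varepsilon)$ simply merges the paper's two cases $p \ge p_0$ and $p < p_0$ into one step. You also verify explicitly, via $\|\tilde K_\varepsilon\|_{L^{q'}} \le \|\tilde K_\varepsilon\|_{L^1}^{1/q'}\|\tilde K_\varepsilon\|_{L^\infty}^{1/q}$, that the Bernstein constant does not depend on $q$, which the paper leaves implicit.
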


\begin{proof}
  Since $v_t$ is a convolution with $K_{\varepsilon}$, and since $K_{\varepsilon}$ is localized at frequencies $\lesssim \varepsilon^{-1}$, 
  Bernstein's inequality \cite[Lemma 2.1]{Bahouri2011} gives
  \begin{align*}
        \mathbb{E} [\| v_t \|_{\infty}^p]^{1 / p} &\lesssim \varepsilon^{- d / p}
     \mathbb{E} [\| v_t \|_{L^p}^p]^{1 / p} \lesssim e^{\tfrac{d}{p} \log(1/\varepsilon)} \left(
     \sqrt{p} \delta \| u_0 \|_{L^{p / 2}}^{1 / 2} + p \delta^2 \right) \\
     &\leq
     e^{\tfrac{d}{p} \log(1/\varepsilon)} \left( \sqrt{p} \delta \| u_0 \|_{\infty}^{1 / 2} + p
     \delta^2 \right).
    \end{align*}
  If $p \geq p_0 = d \log \varepsilon^{- 1}$, then the prefactor is
  bounded by $e$. 
  Hence in that case
  \begin{align*}
         \mathbb{E} [\| v_t \|_{\infty}^p]^{1 / p} &\lesssim
  \sqrt{p} \delta \| u_0 \|_{\infty}^{1 / 2} + p
     \delta^2 \leq \sqrt{p} \kappa \|u_0\|_\infty^{1/2} + p \kappa^2.
    \end{align*}
  For $p < p_0$, we use the monotonicity of moments and bound
  \begin{align*}
        \mathbb{E} [\| v_t \|_{\infty}^p]^{1 / p} &\leq \mathbb{E} [\| v_t
     \|_{\infty}^{p_0}]^{1 / (p_0)}\lesssim \left( \sqrt{p_0} \delta \|
     u_0 \|_{\infty}^{1 / 2} + p_0 \delta^2 \right) \leq \sqrt{p} \kappa \|u_0\|_\infty^{1/2} + p \kappa^2.
     \end{align*}
For the second inequality in \eqref{eq:moment-bound-uniform}, we use $\kappa \leq 1$ and $\|u_0\|_\infty \geq 1$, which imply $ \kappa^2 \leq \kappa \leq  \|u_0\|_\infty^{1/2}$. 
\end{proof}

From the previous result together with Lemma~\ref{lem:tail-estimate} in the Appendix, we deduce the following corollary.
\begin{corollary}
  \label{cor:v-uniform-tails} 
  There exist constants $c,C>0$, independent of $t,N, \varepsilon$ and $u_0$, such that for all $x\geq 0$

 \begin{equation}
      \mathbb{P} (\| v_t \|_{\infty} \geq x) \leq C \exp \left[ -c \left( \frac{x^2}{\kappa^2\|u_0\|_\infty } \wedge \frac{x}{\kappa^2}\right)\right],
 \end{equation}
 and for all $\lambda \in
  [0, c / \kappa^2]$
  \begin{equation} \label{eq:exp-moments-v}
       \mathbb{E} [e^{\lambda \| v_t \|_{\infty}}] \leq C \exp ( C \lambda^2 \kappa^2 \|u_0\|_\infty). 
  \end{equation}
\end{corollary}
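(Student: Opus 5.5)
The plan is to feed the moment bound of Corollary~\ref{cor:vunif} into the tail lemma, Lemma~\ref{lem:tail-estimate} from the Appendix. We only have that lemma's statement by reference, so we take it in the standard form: if a nonnegative random variable $X$ satisfies
\[
\mathbb{E}[X^p]^{1/p} \leq A\sqrt{p} + Bp \qquad \text{for all } p \geq 2,
\]
with $A, B \geq 0$, then
\[
\mathbb{P}(X \geq x) \leq C \exp\!\left(-c\left(\tfrac{x^2}{A^2} \wedge \tfrac{x}{B}\right)\right),
\]
and for $\lambda \in [0, c/B]$ one has $\mathbb{E}[e^{\lambda X}] \leq C \exp(C\lambda^2 A^2)$. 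We apply this to $X = \|v_t\|_\infty$ at fixed $t$. Corollary~\ref{cor:vunif} supplies the hypothesis with
\[
A = C_0\,\kappa\,\|u_0\|_\infty^{1/2}, \qquad B = C_0\,\kappa^2,
\]
where $C_0$ is independent of $p$, $t$, $N$, $\varepsilon$ and $u_0$. This independence is what keeps the constants $c, C$ in the corollary free of these parameters.

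If the lemma needs to be derived directly, proceed in two steps.

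\textbf{Tail bound.} Use Markov's inequality,
\[
\mathbb{P}(X \geq x) \leq \left(\frac{A\sqrt{p} + Bp}{x}\right)^p .
\]
Choose $p = \eta\left(\tfrac{x^2}{A^2} \wedge \tfrac{x}{B}\right)$ with $\eta$ small. Then $A\sqrt{p} \leq \sqrt{\eta}\,x$ and $Bp \leq \eta x$, so the ratio is at most $e^{-1}$ and the bound becomes $e^{-p}$. When this choice gives $p < 2$, the claimed bound is at least a constant, and it holds trivially after enlarging $C$.

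\textbf{Exponential moments.} Expand $\mathbb{E}[e^{\lambda X}] = \sum_k \lambda^k \mathbb{E}[X^k]/k!$. The terms $k < 2$ are handled by monotonicity of moments. For the rest, bound
\[
\frac{\lambda^k \mathbb{E}[X^k]}{k!} \lesssim \left(\frac{e\lambda(A\sqrt{k} + Bk)}{k}\right)^{k} \leq \left(\frac{2e\lambda A}{\sqrt{k}}\right)^k + (2e\lambda B)^k .
\]
The second sum is geometric and bounded once $\lambda B \leq c$. The first sum is of sub-Gaussian type: $\sum_k (a/\sqrt{k})^k \lesssim e^{Ca^2}$, which gives $\exp(C\lambda^2 A^2)$. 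Alternatively, integrate the tail bound and split at $x = A^2/B$.

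Substituting $A^2 = C_0^2\kappa^2\|u_0\|_\infty$ and $B = C_0\kappa^2$ then yields exactly the two claimed estimates. The only delicate point is bookkeeping: the implicit constant in the first inequality of Corollary~\ref{cor:vunif} must be $p$-independent, and it is, as stated there. No step here is a real obstacle.
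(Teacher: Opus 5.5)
Your proposal is correct and follows the paper's proof: set $X=\|v_t\|_\infty$, read off $\beta = K\kappa\|u_0\|_\infty^{1/2}$ and $\gamma = K\kappa^2$ from the $p$-independent moment bound of Corollary~\ref{cor:vunif}, and apply Lemma~\ref{lem:tail-estimate}, whose appendix proof uses the same two steps you sketch (Markov's inequality at $p\simeq \frac{x^2}{\beta^2}\wedge\frac{x}{\gamma}$, and a series expansion with Stirling's bound). The only bookkeeping you leave implicit is that the factor $K$ (your $C_0$) must be absorbed into $c$, so that the range $\lambda\le c/\gamma$ becomes $\lambda\le c/\kappa^2$ with a smaller $c$.
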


\begin{proof}
Set $X:=\| v_t\|_\infty$. Denoting the implicit constant from the moment bound  \eqref{eq:moment-bound-uniform} by $K$, we obtain
\[
\mathbb{E}[X^p]^{1/p} \leq \beta \sqrt{p} + \gamma p, \qquad p \geq 2, 
\]
where
\[ 
   \beta := K \kappa \| u_0 \|_{\infty}^{1 / 2},
     \qquad \gamma := K \kappa^2 . 
\]
Then applying Lemma~\ref{lem:tail-estimate} to $X$, and substituting the values of $\beta$ and $\gamma$,  we obtain the claimed estimates.
\end{proof}

\begin{corollary}
  \label{cor:exp-moments-u}
  For $p,r \in [1,\infty)$, the negative part of $u$ is bounded by
  \begin{equation}\label{eq:moments-negative-u}
      \sup_{t \geq 0} \E[\|u_t^-\|_{L^p}^r]^{1/r} \lesssim_r \delta \|u_0\|_{L^{r\vee (p/2)}}^{1/2}  \exp \left( -\frac{c}{r} \frac{u_{\min}^2}{\kappa^2 \|u_0\|_\infty}\right),
  \end{equation}
  and also
  \begin{equation}\label{eq:maximum-negative-u}
      \sup_{t \geq 0} \E[\|u_t^-\|_{L^\infty}^r]^{1/r} \lesssim_r \kappa \|u_0\|_{L^\infty}^{1/2}  \exp \left( -\frac{c}{r} \frac{u_{\min}^2}{\kappa^2 \|u_0\|_\infty}\right).
  \end{equation}
  Moreover, there exist  $c, C > 0$ such that   for all $\lambda \in
  [0, c/ \kappa^2]$ and all $p \in [1,\infty]$
  \begin{equation}\label{eq:exp-moments-u} \mathbb{E} [e^{\lambda \| u_t \|_{L^p}}] \leq C \exp (\lambda\|u_0\|_{L^p} + C \lambda^2 \kappa^2\|u_0\|_\infty ).
  \end{equation}
\end{corollary}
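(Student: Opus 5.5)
The three bounds follow from Lemma~\ref{lem:negative-u-moments}, Lemma~\ref{lem:v-Lebesgue-moments}, Corollary~\ref{cor:vunif} and Corollary~\ref{cor:v-uniform-tails}, combined in a direct way.

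\textbf{Bound \eqref{eq:moments-negative-u}.} Lemma~\ref{lem:negative-u-moments} gives
\[
\E[\|u_t^-\|_{L^p}^r]\le \E[\|v_t\|_{L^p}^{2r}]^{1/2}\,\mathbb P(\|v_t\|_\infty>u_{\min})^{1/2}.
\]
For the moment factor, set $q:=2r\vee p\vee 2$. Since $\T^d$ has unit volume, $\|v_t\|_{L^p}\le\|v_t\|_{L^q}$, and Jensen's inequality in $\omega$ lets us raise the exponent to $q$. Lemma~\ref{lem:v-Lebesgue-moments} then gives
\[
\E[\|v_t\|_{L^p}^{2r}]^{1/(2r)}\lesssim_{r,p}\delta\|u_0\|_{L^{q/2}}^{1/2},
\]
and $q/2=r\vee(p/2)\vee 1$. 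Since $u_0$ is a probability density, $\|u_0\|_{L^1}=1\le\|u_0\|_{L^s}$ for $s\ge1$, so this is the norm in the claim. (The implicit constant depends on $p$ as well; for $p\le 2r$ it only depends on $r$. I will either accept this or note that $p$ is fixed.)

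For the tail factor, Corollary~\ref{cor:v-uniform-tails} with $x=u_{\min}$ gives
\[
\mathbb P(\|v_t\|_\infty>u_{\min})\le C\exp\Bigl(-c\bigl(\tfrac{u_{\min}^2}{\kappa^2\|u_0\|_\infty}\wedge\tfrac{u_{\min}}{\kappa^2}\bigr)\Bigr).
\]
Since $u_{\min}\le\|u_0\|_\infty$, we have $u_{\min}^2/\|u_0\|_\infty\le u_{\min}$, so the minimum equals the first term. Taking the square root and then the $1/r$-th power yields the factor $\exp(-\tfrac{c}{2r}\tfrac{u_{\min}^2}{\kappa^2\|u_0\|_\infty})$, and renaming $c$ gives \eqref{eq:moments-negative-u}.

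\textbf{Bound \eqref{eq:maximum-negative-u}.} This is identical except that the moment factor is controlled by Corollary~\ref{cor:vunif} with exponent $2r\vee2$, which gives $\kappa\|u_0\|_\infty^{1/2}$.

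\textbf{Bound \eqref{eq:exp-moments-u}.} Split $u_t=p_t\ast u_0+v_t$. Young's inequality gives $\|p_t\ast u_0\|_{L^p}\le\|u_0\|_{L^p}$, and on the unit torus $\|v_t\|_{L^p}\le\|v_t\|_\infty$. Hence
\[
e^{\lambda\|u_t\|_{L^p}}\le e^{\lambda\|u_0\|_{L^p}}e^{\lambda\|v_t\|_\infty},
\]
and \eqref{eq:exp-moments-v} from Corollary~\ref{cor:v-uniform-tails} applies directly for $\lambda\in[0,c/\kappa^2]$.

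\textbf{Main obstacle.} The only delicate point is exponent bookkeeping in the first bound: matching the Lebesgue exponent $r\vee(p/2)$ and handling small $p$ or $r$ (for instance $p=1$ and $r<1$), which requires the monotonicity of moments. Everything else is routine, and all constants are uniform in $t$ because the cited lemmas already are.
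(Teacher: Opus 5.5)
Your proposal is correct and follows the paper's proof essentially line by line. Like the paper, you combine Lemma~\ref{lem:negative-u-moments} with the moment bounds of Lemma~\ref{lem:v-Lebesgue-moments} and Corollary~\ref{cor:vunif} and with the tail bound of Corollary~\ref{cor:v-uniform-tails}, reducing the minimum to its first term via $u_{\min}\le\|u_0\|_\infty$; for the exponential moments you split $u_t=p_t\ast u_0+v_t$ and apply \eqref{eq:exp-moments-v}. Your single exponent $q=2r\vee p\vee 2$ simply merges the paper's two cases $2r\ge p$ and $2r<p$. Your remark that the constant depends on $p$ when $p>2r$ applies equally to the paper's argument, which writes $\lesssim_r$ there.
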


\begin{proof}
For the negative part we apply Lemma~\ref{lem:negative-u-moments} together with Lemma~\ref{lem:v-Lebesgue-moments} and Corollary~\ref{cor:v-uniform-tails}: If $2r \geq p$, we bound
\begin{align*}
    \mathbb{E} [\| u^-_t \|_{L^p}^r]^{1/r} & \leq \mathbb{E} [\| v_t \|_{L^p}^{2 r}]^{1 / 2r} \mathbb{P} (\| v_t \|_{\infty} > u_{\min})^{1 / 2r} \\
    & \leq \E[\| v_t \|_{L^{2r}}^{2 r}]^{1 / 2r} \mathbb{P} \left(\| v_t \|_{\infty} > u_{\min}\right)^{1 / 2r} \\
    & \lesssim_r \delta \|u_0\|_{L^r}^{1/2}  \exp \left[ -\frac{c}{r}\left( \frac{u_{\min}^2}{\kappa^2 \|u_0\|_\infty} \wedge \frac{u_{\min}}{\kappa^2}\right)\right] \\
    & \lesssim \delta \|u_0\|_{L^r}^{1/2}  \exp \left( -\frac{c}{r} \frac{u_{\min}^2}{\kappa^2\|u_0\|_\infty}\right),
\end{align*}
and if $2r < p$ we bound instead
\[
    \mathbb{E} [\| v_t \|_{L^p}^{2 r}]^{1 / 2r} \leq \mathbb{E} [\| v_t \|_{L^p}^{p}]^{1 / p} \lesssim \delta \|u_0\|_{L^{p/2}}^{1/2}.
\]
Similarly, we obtain for the uniform norm by Corollary~\ref{cor:vunif}:
\begin{align*}
    \mathbb{E} [\| u^-_t \|_{L^\infty}^r]^{1/r} \leq \mathbb{E} [\| v_t \|_{L^\infty}^{2 r}]^{1 / 2r} \mathbb{P} (\| v_t \|_{\infty} > u_{\min})^{1 / 2r} \lesssim_r \kappa \|u_0\|_{L^\infty}^{1/2}  \exp \left( -\frac{c}{r} \frac{u_{\min}^2}{\kappa^2\|u_0\|_\infty}\right).
\end{align*}
To bound the exponential moments, we decompose $u_t = p_t \ast u_0 + v_t$, hence
\[
\mathbb{E}\left[\exp \left(\lambda \| u_t \|_{L^p} \right) \right] \leq \exp (\lambda\|p_t \ast u_0\|_{L^p}) \mathbb{E}\left[\exp\left(\lambda \| v_t \|_\infty \right)\right]
\]
and then we use that $\|p_t \ast u_0\|_{L^p} \leq \|u_0\|_{L^p}$ and the bound~\eqref{eq:exp-moments-v} from the previous corollary for the second factor. 
\end{proof}

\section{Initial approximation of particles by a density}\label{sec:initial_approx}

Recall that
\[
    \delta = \varepsilon^{-d/2}N^{-1/2},\qquad \kappa = \delta \sqrt{1\vee d\log \frac{1}{\varepsilon}} .
\]

\begin{proposition}\label{prop:IC}
Let $d \geq 1$, let $X_0^1,\ldots,X_0^N$ be i.i.d.
 with common density $\rho_0$ on $\mathbb{T}^d$, and assume that $\rho_0(x) \in [\rho_{\min},\rho_{\max}]$ for all $x \in \mathbb T^d$. Let $\mu_0 := \frac1N \sum_{i=1}^N \delta_{X_0^i}$ and let $K_\varepsilon$ be as in Section~\ref{sec:main}, i.e. $K_\varepsilon = \mathcal{F}^{-1} (\chi(\varepsilon\cdot))$ for a symmetric function $\chi \in C^\infty_c(\R^d)$ with $\chi \equiv 1$ on a neighborhood of $0$. Define
\[
        \nu := K_\varepsilon *(\mu_0-\rho_0).
\]
We set
\[
        \bar{u} :=
        \begin{cases}
        \rho_0 + \nu,
        & \|\nu\|_\infty \leq \frac{\rho_{\min}}{2},\\
        \rho_0,
        & \text{otherwise}.
        \end{cases}
\]
Then, for every $\alpha>0$ and every $\varphi \in C^\alpha(\mathbb{T}^d,\C)$, 
\[
        \mathbb{E}\left[\left|\langle \mu_0-\bar{u},\varphi\rangle\right|^2\right]^{1/2}
        \lesssim N^{-1/2} \min_{p \in \{2,\infty\}} \left\{\varepsilon^\alpha \rho_{\max}^{1/p}\|\varphi\|_{B^\alpha_{p,\infty}}
        + \rho_{\max}^{1/(2p)}\|\varphi\|_{L^{2p}}
        \exp\left(-c \frac{\rho_{\min}^2}{\kappa^2\rho_{\max}}\right) \right\},
\]
where $\rho_{\max}^{1/\infty} := 1$ and the constants depend only on $d$, $\alpha$, and $\chi$.
\end{proposition}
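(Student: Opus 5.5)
We split according to the event $E:=\{\|\nu\|_\infty\le \rho_{\min}/2\}$. On $E$ we have $\mu_0-\bar u=(\mu_0-\rho_0)-K_\varepsilon*(\mu_0-\rho_0)$, and on $E^c$ we have $\mu_0-\bar u=\mu_0-\rho_0$. Hence
\[
\langle \mu_0-\bar u,\varphi\rangle=\langle \mu_0-\rho_0,\varphi-K_\varepsilon*\varphi\rangle+\mathds 1_{E^c}\langle \mu_0-\rho_0,K_\varepsilon*\varphi\rangle ,
\]
using that $K_\varepsilon$ is even, so that convolution can be moved onto $\varphi$. We bound the two terms separately in $L^2(\Omega)$.

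\emph{First term.} $\langle\mu_0-\rho_0,\psi\rangle=\frac1N\sum_i(\psi(X_0^i)-\E\psi(X_0^i))$ is a sum of centered i.i.d. variables. Its second moment is therefore at most $N^{-1}\int|\psi|^2\rho_0\le N^{-1}\min\{\|\psi\|_\infty^2,\rho_{\max}\|\psi\|_{L^2}^2\}$. We apply this with $\psi=\varphi-K_\varepsilon*\varphi$. Since $\chi\equiv1$ near $0$, the multiplier $1-\chi(\varepsilon\cdot)$ vanishes on frequencies $\lesssim\varepsilon^{-1}$. A Littlewood--Paley argument, or Lemma~\ref{lem:K-eps-error} which we assume covers $L^p$ norms, gives $\|\varphi-K_\varepsilon*\varphi\|_{L^p}\lesssim\varepsilon^\alpha\|\varphi\|_{B^\alpha_{p,\infty}}$ for $p\in\{2,\infty\}$. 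This yields $N^{-1/2}\varepsilon^\alpha\rho_{\max}^{1/p}\|\varphi\|_{B^\alpha_{p,\infty}}$, with $\rho_{\max}^{1/2}$ appearing for $p=2$, as claimed.

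\emph{Second term.} By Cauchy--Schwarz,
\[
\E[\mathds 1_{E^c}|\langle\mu_0-\rho_0,K_\varepsilon*\varphi\rangle|^2]\le \E[|\langle\mu_0-\rho_0,K_\varepsilon*\varphi\rangle|^4]^{1/2}\,\mathbb P(E^c)^{1/2}.
\]
For the fourth moment of a normalized i.i.d. sum we use Rosenthal's inequality. It gives $\lesssim N^{-2}(\E|\psi(X)|^2)^2+N^{-3}\E|\psi(X)|^4\lesssim N^{-2}\int|\psi|^4\rho_0$, where the last step uses $(\int|\psi|^2\rho_0)^2\le\int|\psi|^4\rho_0$ by Jensen. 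With $\psi=K_\varepsilon*\varphi$ this is at most $N^{-2}\min\{\|\psi\|_\infty^4,\rho_{\max}\|\psi\|_{L^4}^4\}$. Since $K_\varepsilon$ is $L^1$-bounded uniformly in $\varepsilon$ (it is a rescaled Schwartz function), $\|K_\varepsilon*\varphi\|_{L^q}\lesssim\|\varphi\|_{L^q}$. After taking the square root of the fourth-moment bound we obtain the factor $N^{-1/2}\rho_{\max}^{1/(2p)}\|\varphi\|_{L^{2p}}$.

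\emph{Tail of $\nu$.} The main obstacle is showing $\mathbb P(\|\nu\|_\infty>\rho_{\min}/2)\lesssim\exp(-c\rho_{\min}^2/(\kappa^2\rho_{\max}))$. We mimic Corollary~\ref{cor:vunif} and Corollary~\ref{cor:v-uniform-tails}. Pointwise, $\nu(x)=\frac1N\sum_i(K_\varepsilon(x-X^i)-\E K_\varepsilon(x-X))$, and Rosenthal's inequality for $p\ge2$ gives
\[
\E|\nu(x)|^p\lesssim \Big(\sqrt{p}\,N^{-1/2}(\textstyle\int K_\varepsilon^2\rho_0)^{1/2}+pN^{-1}\|K_\varepsilon\|_\infty\Big)^p .
\]
Since $\int K_\varepsilon^2\rho_0\le\rho_{\max}\varepsilon^{-d}$ and $\|K_\varepsilon\|_\infty\lesssim\varepsilon^{-d}$, this is $\E[\|\nu\|_{L^p}^p]^{1/p}\lesssim\sqrt p\,\delta\rho_{\max}^{1/2}+p\delta^2$. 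Because $\nu$ has Fourier support in a ball of radius $\lesssim\varepsilon^{-1}$, Bernstein's inequality together with the choice $p\ge d\log(1/\varepsilon)$ (and monotonicity of moments for smaller $p$) upgrades this to $\E[\|\nu\|_\infty^p]^{1/p}\lesssim\sqrt p\,\kappa\rho_{\max}^{1/2}+p\kappa^2$. Lemma~\ref{lem:tail-estimate} then gives $\mathbb P(\|\nu\|_\infty>\rho_{\min}/2)\le C\exp(-c(\frac{\rho_{\min}^2}{\kappa^2\rho_{\max}}\wedge\frac{\rho_{\min}}{\kappa^2}))$. Since $\rho_{\min}\le1\le\rho_{\max}$ for a probability density, the minimum equals the first term. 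Inserting $\mathbb P(E^c)^{1/2}$ and adjusting $c$ completes the bound. Finally, we take the minimum over $p\in\{2,\infty\}$ in the combined estimate.
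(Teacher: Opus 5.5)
Your proof is correct and follows essentially the same route as the paper. You split on $\{\|\nu\|_\infty\le\rho_{\min}/2\}$, use Lemma~\ref{lem:K-eps-error} for the main term, and handle the exceptional event by Cauchy--Schwarz with a fourth-moment bound. You then control $\mathbb P(\|\nu\|_\infty>\rho_{\min}/2)$ through pointwise moments of $\nu(x)$, Bernstein's frequency-localization inequality with $p\ge d\log\frac1\varepsilon$, and Lemma~\ref{lem:tail-estimate}.

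The differences are only cosmetic. You write an exact identity that leaves $K_\varepsilon*\varphi$ in the exceptional term, which is harmless since $\|K_\varepsilon\|_{L^1}\lesssim 1$; the paper instead bounds directly by $\langle\mu_0-\rho_0,\varphi\rangle\mathds 1_{A^c}$. You also get the pointwise moments of $\nu(x)$ from a Rosenthal-type bound in the form $\sqrt p\,\sigma+p\,\|Y\|_\infty$, where the paper uses Bernstein's tail inequality plus the converse part of Lemma~\ref{lem:tail-estimate}. Note that the textbook Rosenthal second term is $p\,N^{-1+1/p}\|Y\|_{L^p}$, not $pN^{-1}\|K_\varepsilon\|_\infty$. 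To reach your stated form you need the interpolation $\|Y\|_{L^p}\le\|Y\|_\infty^{1-2/p}\|Y\|_{L^2}^{2/p}$ followed by Young's inequality, or you can simply cite Bernstein.
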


\begin{proof}
By arguing componentwise, we may assume that $\varphi$ is real-valued. Let
\[
        A :=\left\{ \|\nu\|_{L^\infty} \le \frac{\rho_{\min}}{2}\right\}.
\]
On $A$, we have \(\bar{u}=\rho_0+\nu\), and hence, using the symmetry of \(K_\varepsilon\),
\[
        \langle \mu_0-\bar{u},\varphi\rangle
        =
        \langle \mu_0-\rho_0,\varphi\rangle
        -
        \langle K_\varepsilon*(\mu_0-\rho_0),\varphi\rangle
        =
        \langle \mu_0-\rho_0,(1-K_\varepsilon)\varphi\rangle,
\]
where we slightly abuse notation by writing $(1-K_\varepsilon)\varphi:=\varphi-K_\varepsilon\ast \varphi$. On $A^c$, we have \(\bar{u}=\rho_0\), and therefore
\[
        \langle \mu_0-\bar{u},\varphi\rangle
        =
        \langle \mu_0-\rho_0,\varphi\rangle .
\]
Thus
\begin{equation}\label{eq:initial-pr1}
        \mathbb{E}\left[\left|\langle \mu_0-\bar{u},\varphi\rangle\right|^2\right]^{1/2}
        \leq \mathbb{E}\left[\left|\langle \mu_0-\rho_0,(1-K_\varepsilon)\varphi\rangle\right|^2\right]^{1/2} +
        \mathbb{E}\left[\left|\langle \mu_0-\rho_0,\varphi\rangle\right|^2\mathds{1}_{A^c}\right]^{1/2}.
\end{equation}
We estimate the two terms separately. The random variables $(\psi(X^i_0) - \mathbb E[\psi(X^i_0)])_{i=1,\dots, N}$ are centered and independent and thus martingale increments. So, for any $p\geq 2$ the Burkholder--Davis--Gundy inequality and Minkowski's inequality yield
\begin{align*}
    \mathbb E[|\langle \mu_0 - \bar{u},\psi\rangle|^p]^{1/p} & \lesssim_p \frac1N \mathbb E\left[\left( \sum_{i=1}^N |\psi(X^i_0) - \mathbb E[\psi(X^i_0)]|^2\right)^{p/2}\right]^{1/p}\\
    & \leq \frac1N \mathbb E\left[\sum_{i=1}^N \||\psi(X^i_0) - \mathbb E[\psi(X^i_0)]|^2\|_{L^{p/2}(\Omega)}\right]^{1/2} \\
    & \lesssim N^{-1/2} \min\left\{\|\psi\|_{L^\infty(\mathbb T^d)}, \left(\int_{\T^d}|\psi(x)|^p\rho_0(x) \mathd x\right)^{1/p}\right\} \\
    & \leq N^{-1/2} \rho_{\max}^{1/p} \|\psi\|_{L^\infty(\mathbb T^d)},
\end{align*}
where we interpret $\rho_{\max}^{1/\infty}:=1$. Together with an application of the Cauchy-Schwarz inequality, this reduces~\eqref{eq:initial-pr1} to
\begin{align} \nonumber
            &\mathbb{E}\left[\left|\langle \mu_0-\bar{u},\varphi\rangle\right|^2\right]^{1/2}\\ \nonumber
            & \lesssim \min_{p \in \{2,\infty\}} \Bigl\{ N^{-1/2}\rho_{\max}^{1/p} \|(1-K_\varepsilon)\varphi\|_{L^p(\mathbb T^d)} + N^{-1/2}\rho_{\max}^{1/(2p)}\|\varphi\|_{L^{2p}(\mathbb T^d)}\mathbb P(A^c)^{1/4}\Bigr\} \\
        & \lesssim \min_{p \in \{2,\infty\}} \Bigl\{ N^{-1/2} \rho_{\max}^{1/p} \varepsilon^\alpha\|\varphi\|_{B^\alpha_{p,\infty}} + N^{-1/2} \rho_{\max}^{1/(2p)}\|\varphi\|_{L^{2p}(\mathbb T^d)} \mathbb P(A^c)^{1/4} \Bigr\}, \label{eq:initial-pr2}
\end{align}
using Lemma~\ref{lem:K-eps-error} in the second step. To conclude the proof, we have to estimate $\mathbb P(A^c)$. We start by estimating $\mathbb P(|\nu(x)| \ge r)$ for a fixed $x \in \mathbb T^d$ and $r \ge 0$ by Bernstein's inequality. For that purpose, note that $(K_\varepsilon(x-X^i_0) - \mathbb E[K_\varepsilon(x-X^i_0)])_{i=1,\dots,N}$ are centered i.i.d. with variance
\begin{align*}
    \operatorname{Var}(K_\varepsilon(x-X^i_0) - \mathbb E[K_\varepsilon(x-X^i_0)]) & \leq \int_{\mathbb T^d} |K_\varepsilon(x-y)|^2 \rho_0(y)\mathd y \leq \|K_\varepsilon\|_{L^2}^2 \rho_{\max} \\
    & \lesssim \varepsilon^{-d} \|K_\varepsilon\|_{L^1} \rho_{\max} \simeq \varepsilon^{-d} \rho_{\max}
\end{align*}
which follows from the Bernstein inequality for frequency localized functions in \cite[Lemma 2.1]{Bahouri2011}, and this inequality also yields
\[
    |K_\varepsilon(x-X^i_0) - \mathbb E[K_\varepsilon(x-X^i_0)]| \lesssim \|K_\varepsilon\|_{L^\infty} \lesssim \varepsilon^{-d} \|K_\varepsilon\|_{L^1} \simeq \varepsilon^{-d}.
\]
Therefore, Bernstein's inequality for independent centered random variables gives for $Y_i = K_\varepsilon(x-X^i_0) - \mathbb E[K_\varepsilon(x-X^i_0)]$ and for some $c>0$
\begin{align*}
    \mathbb P(|\nu(x)| \ge r) & = \mathbb P\left(\left|\frac1N \sum_{i=1}^N Y_i\right| \ge r\right) \\
    & \leq \exp\left( -\frac{\tfrac12 N^2 r^2}{N\E[Y_i^2] + \tfrac13 \|Y_i\|_{L^\infty(\Omega)} Nr} \right)\\
    & \leq \exp\left( -\frac{\tfrac12 N^2 r^2}{NC\varepsilon^{-d} \rho_{\max} + \tfrac13 C\varepsilon^{-d}Nr} \right) \\
    & \leq \exp\left( -c \left(\frac{r^2}{(\varepsilon^{-d/2} N^{-1/2}\rho_{\max}^{1/2})^2}\wedge \frac{r}{\varepsilon^{-d}N^{-1}} \right)\right)\\
    & = \exp\left( -c \left(\frac{r^2}{(\delta\rho_{\max}^{1/2})^2}\wedge \frac{r}{\delta^2} \right)\right).
\end{align*}
To pass from this estimate in a single point to a uniform bound, we first estimate the moments of $\|\nu\|_{L^p}$. The second part of Lemma~\ref{lem:tail-estimate} gives for some $C>0$
\begin{align*}
    \mathbb E[\|\nu\|_{L^p}^p] & = \int_{\mathbb T^d} \mathbb E[|\nu(x)|^p]\mathd x \leq C^p (\delta \rho_{\max}^{1/2})^{p} p^{p/2} + C^p (\delta^2)^p p^p,  
\end{align*}
and therefore,
\[
    \mathbb E[\|\nu\|_{L^p}^p]^{1/p} \lesssim \delta \sqrt{\rho_{\max}} \sqrt{p} + \delta^2 p.
\]
The next steps are very similar to the arguments in Section~\ref{sec:negative}: We apply again \cite[Lemma 2.1]{Bahouri2011}, using that $\nu$ is frequency localized at frequencies $\lesssim 1/\varepsilon$, and we obtain
\[
    \mathbb E[\|\nu\|_{L^\infty}^p]^{1/p} \lesssim \varepsilon^{-d/p} \mathbb E[\|\nu\|_{L^p}^p]^{1/p} \lesssim \varepsilon^{-d/p}\left(\delta\sqrt{\rho_{\max}}  \sqrt{p} + \delta^2 p\right).
\]
For $p\geq d \log \tfrac1\varepsilon$ the factor $\varepsilon^{-d/p}$ is of order $1$. For smaller $p$ we use the monotonicity of moments and obtain with $p_0 = d \log \tfrac1\varepsilon$:
\[
    \mathbb E[\|\nu\|_{L^\infty}^p]^{1/p} \leq \mathbb E[\|\nu\|_{L^\infty}^{p_0}]^{1/p_0} \lesssim \delta \sqrt{\rho_{\max}}  \sqrt{p_0} + \delta^2 p_0 \lesssim \sqrt{\rho_{\max}}\kappa + \kappa^2,
\]
so that we can estimate for all $p \geq 1$:
\[
    \mathbb E[\|\nu\|_{L^\infty}^p]^{1/p} \lesssim \sqrt{\rho_{\max}} \kappa \sqrt{p} + \kappa^2 p.
\]
Now Lemma~\ref{lem:tail-estimate} shows that
\[
    \mathbb P(A^c) = \mathbb P\left(\|\nu\|_{L^\infty} \geq \frac{\rho_{\min}}{2}\right) \lesssim \exp\left(-c \left(\frac{\rho_{\min}^2}{\rho_{\max}\kappa^2}\wedge \frac{\rho_{\min}}{\kappa^2}\right)\right) = \exp\left(-c \left(\frac{\rho_{\min}^2}{\rho_{\max}\kappa^2}\right)\right),
\]
which together with~\eqref{eq:initial-pr2} concludes the proof.
\end{proof}

\section{Duality and the Hamilton--Jacobi--Bellman equation}\label{sec:HJB}

Let $\mu_t = \tfrac1N \sum_{k=1}^N \delta_{B^k_t}$ be the empirical measure of $N$ independent Brownian motions on $\mathbb T^d$. Here we discuss that the duality of $\mu$ with the Hamilton--Jacobi--Bellman equation from \cite{Konarovskyi2019} extends to complex valued test functions, and we derive a Cole--Hopf representation and regularity estimates for the Hamilton--Jacobi--Bellman equation. 

\begin{lemma}\label{lem:duality}
    Let $\varphi\in C^{1,2}(\R_+\times \T^d,\C)$. Then
    \[
        \mathd e^{\langle \mu_t, \varphi_t \rangle} = e^{\langle \mu_t, \varphi_t \rangle}  \left\langle \mu_t,  \partial_t \varphi_t + \frac{1}{2} \Delta \varphi_t + \frac{1}{2 N} (\nabla  \varphi_t)^2 \right\rangle \mathd t  + \mathd M_t,
    \]
    where $(\nabla \varphi_t)^2 = \nabla \varphi_t\cdot \nabla \varphi_t$ is not to be confused with $|\nabla \varphi_t|^2$, and $M$ is a complex-valued martingale.
\end{lemma}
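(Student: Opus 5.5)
The idea is to apply the finite-dimensional Itô formula to the particle positions. Write $F_t := \langle \mu_t,\varphi_t\rangle = \frac1N\sum_{k=1}^N \varphi_t(B^k_t)$. Since $\varphi\in C^{1,2}$ is complex valued, I will apply Itô's formula to the real and imaginary parts separately and recombine them by linearity. For each $k$ this gives
\[
\mathd \varphi_t(B^k_t) = \bigl(\partial_t\varphi_t + \tfrac12\Delta\varphi_t\bigr)(B^k_t)\,\mathd t + \nabla\varphi_t(B^k_t)\cdot \mathd B^k_t .
\]
Because the Brownian motions are independent, $\mathd\langle B^k,B^l\rangle_t = \delta_{kl} I\,\mathd t$. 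Summing over $k$, we obtain
\[
\mathd F_t = \langle \mu_t, \partial_t\varphi_t+\tfrac12\Delta\varphi_t\rangle \mathd t + \mathd L_t,\qquad L_t := \frac1N\sum_k\int_0^t \nabla\varphi_s(B^k_s)\cdot \mathd B^k_s .
\]
The complex bracket (bilinear, not sesquilinear) is
\[
\mathd\langle L,L\rangle_t = \frac1{N^2}\sum_k (\nabla\varphi_t\cdot\nabla\varphi_t)(B^k_t)\,\mathd t = \frac1N\langle\mu_t,(\nabla\varphi_t)^2\rangle\,\mathd t .
\]

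Next I apply Itô's formula to the entire function $z\mapsto e^z$ composed with the complex semimartingale $F$. Equivalently, one can use the real $C^2$ map $(x,y)\mapsto e^{x+iy}$ and check that the second-order terms combine holomorphically into $\tfrac12 e^{F}\,\mathd\langle F,F\rangle$, with $\langle F,F\rangle = \langle \operatorname{Re}F\rangle-\langle\operatorname{Im}F\rangle+2i\langle\operatorname{Re}F,\operatorname{Im}F\rangle$. This gives
\[
\mathd e^{F_t} = e^{F_t}\,\mathd F_t + \tfrac12 e^{F_t}\,\mathd\langle L,L\rangle_t ,
\]
which is exactly the claimed drift. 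The remaining part is $M_t := \int_0^t e^{F_s}\,\mathd L_s$.

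The only point requiring care is that $M$ is a true martingale and not merely a local one. On the torus, $|e^{F_s}| = e^{\langle\mu_s,\operatorname{Re}\varphi_s\rangle}\le e^{\|\operatorname{Re}\varphi_s\|_\infty}$, and $\nabla\varphi_s$ is bounded on $[0,T]\times\T^d$ by the $C^{1,2}$ assumption, with compactness giving uniform bounds on compact time intervals. So the integrand of each real stochastic integral is bounded on $[0,T]$, the integrals are square-integrable martingales, and hence so is $M$. I expect no real obstacle. The main things to get right are bookkeeping: using the bilinear square $(\nabla\varphi)^2$ rather than $|\nabla\varphi|^2$, which comes from the holomorphy of $\exp$, and the factor $1/N$ coming from $N^{-2}\sum_k$.
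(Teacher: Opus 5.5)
Your proposal is correct and follows essentially the same route as the paper: It\^o's formula for the real and imaginary parts of $\varphi_t(B^k_t)$, the bilinear complex bracket $\frac1N\langle\mu_t,(\nabla\varphi_t)^2\rangle\,\mathd t$, It\^o's formula for the holomorphic exponential, and the true martingale property from boundedness of $e^{\langle\mu_t,\varphi_t\rangle}$ and $\nabla\varphi$ on compact time intervals. You are slightly more explicit than the paper about why the stochastic integrals are square-integrable, but the argument is the same.
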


\begin{proof}
    Applying It\^o's formula to real and imaginary part, we obtain
\[ \mathd \varphi_t (B^k_t) = \partial_t \varphi_t(B^k_t)\mathd t + \frac{1}{2} \Delta \varphi_t (B^k_t)\mathd t + \nabla
   \varphi_t (B^k_t) \cdot \mathd B^k_t. \]
In the following we write
    \[
        [M]=[M,M] = [\mathrm{Re}(M), \mathrm{Re}(M)] - [\mathrm{Im}(M),\mathrm{Im}(M)] + 2i[\mathrm{Re}(M),\mathrm{Im}(M)]
    \]
    for the usual quadratic variation. This gives
\[ \mathd \langle \mu_t, \varphi_t \rangle = \left\langle \mu_t, \partial_t \varphi_t + \frac{1}{2}
   \Delta \varphi_t \right\rangle \mathd t + \mathd M_t (\varphi), \qquad \mathd
   [ M (\varphi)]_t = \frac{1}{N} \langle \mu_t, (\nabla
   \varphi_t)^2 \rangle \mathd t. \]
The exponential function is holomorphic, and therefore It\^o's formula for holomorphic functions of complex-valued semimartingales yields
\begin{align*}
  \mathd e^{\langle \mu_t, \varphi_t \rangle} & = e^{\langle \mu_t,
  \varphi_t \rangle} \left( \langle \mu_t, \partial_t \varphi_t \rangle \mathd
  t + \left\langle \mu_t, \frac{1}{2} \Delta \varphi_t \right\rangle \mathd t
  + \mathd M_t (\varphi) + \frac{1}{2} \mathd [ M (\varphi)
  ]_t \right)\\
  & = e^{\langle \mu_t, \varphi_t \rangle} \left( \left\langle \mu_t,
  \partial_t \varphi_t + \frac{1}{2} \Delta \varphi_t + \frac{1}{2 N} (\nabla
  \varphi_t)^2 \right\rangle \mathd t + \mathd M_t (\varphi) \right) .
\end{align*}
Since $\varphi \in C^{1,2}(\R_+\times \T^d,\C)$, the functions $e^{\langle \mu_t, \varphi_t\rangle}$ and $\nabla \varphi$ are bounded locally in time, and therefore the stochastic integral on the right hand side is a true martingale.
\end{proof}

Recall that $(p_t)_{t\geq 0}$ is the periodic heat kernel for $\tfrac12 \Delta$. Next, we derive the Cole--Hopf formula for the complex-valued Hamilton--Jacobi--Bellman equation, which is slightly more subtle than in the real-valued case because there could be cancellations and we need to guarantee that $p_t \ast e^{\tfrac1N\varphi}$ stays bounded away from zero. Here we consider the equation forward in time, so that in the application in Section~\ref{sec:main} we have to reverse time.

\begin{lemma}\label{lem:Cole-Hopf}
    Let $N \in \N$ and let $\varphi \in C^2(\T^d,\C)$ be such that $\|\tfrac1N \varphi\|_\infty \leq \tfrac\pi4$. Let $\log$ be the principal branch of the logarithm. Then the function
    \begin{equation}\label{eq:Cole-Hopf}
       \varphi_t = N \log \left( p_t \ast e^{\frac{1}{N} \varphi} \right) ,\qquad t \geq 0,
    \end{equation}
    is a classical solution to the Hamilton--Jacobi--Bellman equation
    \begin{equation}\label{eq:HJB}
       \partial_t \varphi_t - \frac{1}{2} \Delta \varphi_t - \frac{1}{2 N} (\nabla
   \varphi_t)^2 = 0,\qquad \varphi_0 = \varphi,
    \end{equation}
    and there exists $C>0$, depending only on $\tfrac\pi4$, such that $|e^{\frac1N \varphi_t}|\in \left[C^{-1},C\right]$ for all $t \geq 0$.
\end{lemma}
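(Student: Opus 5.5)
The proof is the Cole--Hopf transform, with one extra point: in the complex case we must show that $w_t := p_t \ast e^{\frac1N\varphi}$ stays inside a sector where the principal logarithm is holomorphic, and in a compact annulus around the origin.

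\textbf{Step 1: the range of $w_t$.} Write $\frac1N\varphi = a + ib$ with $|a|,|b| \le \frac\pi4$. Then $e^{\frac1N\varphi(x)} = e^{a(x)}e^{ib(x)}$ has modulus in $[e^{-\pi/4},e^{\pi/4}]$ and argument in $[-\frac\pi4,\frac\pi4]$. Hence
\[
\mathrm{Re}\, e^{\frac1N\varphi(x)} \ge e^{-\pi/4}\cos(\tfrac\pi4) > 0 .
\]
Since $p_t\ge0$ has unit mass, $w_t(x)$ is a convex average of points in the convex set
\[
S=\{z=re^{i\theta}:\ r\in[e^{-\pi/4},e^{\pi/4}],\ |\theta|\le\tfrac\pi4\}^{\mathrm{conv}} .
\]
This set lies in the half-plane $\{\mathrm{Re}\,z\ge e^{-\pi/4}/\sqrt2\}$ and in the disc of radius $e^{\pi/4}$. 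Consequently
\[
|w_t(x)|\in[e^{-\pi/4}/\sqrt2,\ e^{\pi/4}] \quad\text{and}\quad \arg w_t(x)\in[-\tfrac\pi4,\tfrac\pi4].
\]
In particular $w_t$ avoids the branch cut $(-\infty,0]$. Since $e^{\frac1N\varphi_t}=w_t$, this gives $|e^{\frac1N \varphi_t}|\in[C^{-1},C]$ with $C=\sqrt2 e^{\pi/4}$, uniformly in $t\ge0$.

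\textbf{Step 2: regularity.} Because $\varphi\in C^2$, $e^{\frac1N\varphi}\in C^2(\T^d,\C)$. The function $w$ therefore solves $\partial_t w=\frac12\Delta w$ classically, with $w\in C^{1,2}((0,\infty)\times\T^d)\cap C([0,\infty);C^2)$. Derivatives commute with the convolution, so $\nabla^k w_t = p_t\ast\nabla^k e^{\varphi/N}$ for $k\le2$, which is continuous up to $t=0$. The principal $\log$ is holomorphic on a neighborhood of $S$, so $\varphi_t = N\log w_t$ inherits the same regularity and $\varphi_0=\varphi$. Here we use $\log(e^{z})=z$ for $|\mathrm{Im}\, z|\le\frac\pi4<\pi$.

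\textbf{Step 3: the PDE.} By the holomorphic chain rule,
\[
\partial_t\varphi_t = N\frac{\partial_t w}{w}, \qquad \nabla\varphi_t = N\frac{\nabla w}{w}, \qquad \Delta\varphi_t = N\frac{\Delta w}{w} - N\frac{(\nabla w)^2}{w^2}.
\]
Hence
\[
\frac12\Delta\varphi_t + \frac1{2N}(\nabla\varphi_t)^2 = \frac N2\frac{\Delta w}{w} = N\frac{\partial_t w}{w} = \partial_t\varphi_t,
\]
where $(\nabla\cdot)^2$ denotes the bilinear (non-conjugated) square. This is \eqref{eq:HJB}.

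The only step needing care is Step 1. A plain modulus bound does not suffice, because averaging complex numbers could in principle produce cancellation towards $0$. The argument avoids this through the convexity observation: every value of $e^{\varphi/N}$ lies in a sector of half-angle $\frac\pi4<\frac\pi2$, so all values share a positive real part.
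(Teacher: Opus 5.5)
Your proof is correct and is essentially the paper's argument. The paper also derives $\operatorname{Re}(e^{\frac1N\varphi})\geq e^{-\pi/4}/\sqrt{2}$ from the bound on the argument, and uses $\operatorname{Re}(p_t\ast e^{\frac1N\varphi})=p_t\ast\operatorname{Re}(e^{\frac1N\varphi})$ (your convex-hull observation) to keep $p_t\ast e^{\frac1N\varphi}$ away from the branch cut with modulus in $[e^{-\pi/4}/\sqrt{2},e^{\pi/4}]$. It then does the same holomorphic chain-rule computation; your Step 2 is just more explicit than the paper about regularity up to $t=0$ and the identity $\varphi_0=\varphi$.
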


\begin{proof}
    The definition/regularity of $\varphi_t$ has issues if $p_{t} \ast e^{\frac{1}{N} \varphi}$ takes values in the negative half-line, where the logarithm is not holomorphic. The condition $\| \frac{1}{N} \varphi\|_{\infty} \leq \frac{\pi}{4}$ rules this out, because then $| \operatorname{Arg} ( e^{\frac{1}{N} \varphi} )| \leq\frac{\pi}{4}$, so $|\operatorname{Im} ( e^{\frac{1}{N} \varphi} ) | \leq
   | \operatorname{Re} ( e^{\frac{1}{N} \varphi} ) |$ and thus
\begin{equation*}
  \operatorname{Re} \left( e^{\frac{1}{N} \varphi} \right)   \geq \sqrt{\frac{1}{2} \operatorname{Re} \left( e^{\frac{1}{N} \varphi}
  \right)^2 + \frac{1}{2} \operatorname{Im} \left( e^{\frac{1}{N} \varphi}
  \right)^2} = \sqrt{\frac{1}{2}} \left| e^{\frac{1}{N} \varphi} \right| = \sqrt{\frac{1}{2}} e^{ \operatorname{Re}(\frac{1}{N}\varphi)} \geq \sqrt{\frac{1}{2}} e^{- \frac{\pi}{4}} .
\end{equation*}
Therefore,
\[ \operatorname{Re} \left( p_t \ast e^{\frac{1}{N} \varphi} \right) = p_t \ast
   \operatorname{Re} \left( e^{\frac{1}{N} \varphi} \right) \geq
   \sqrt{\frac{1}{2}} e^{- \frac{\pi}{4}}, \]
and of course also
\[ \left| p_t \ast e^{\frac{1}{N} \varphi} \right| \geq \left| \operatorname{Re}
   \left( p_t \ast e^{\frac{1}{N} \varphi} \right) \right| = \left| p_t \ast
   \operatorname{Re} \left( e^{\frac{1}{N} \varphi} \right) \right| \geq
   \sqrt{\frac{1}{2}} e^{- \frac{\pi}{4}}, \]
and
\[ \left| p_t \ast e^{\frac{1}{N} \varphi} \right| \leq e^{\frac{\pi}{4}}
   . \]
Since $e^{\tfrac1N \varphi_t} = p_t \ast e^{\frac{1}{N} \varphi}$, we have shown the claim $|e^{\tfrac1N \varphi_t}| \in [C^{-1},C]$ for suitable $C>0$. We have also shown that there exists
an open rectangle $R = (- C, C) i + (c, C)$ with $c,C>0$ such that
\begin{equation}\label{eq:support-heat-convolution}
    p_t \ast e^{\frac{1}{N} \varphi}  \in R.
\end{equation}
Since $R$ is bounded away from the negative half-axis, the logarithm is holomorphic
and bounded on $R$, and therefore the following computation is justified:
\[
    \partial_t \varphi_t = N \frac{\partial_t p_t \ast e^{\frac{1}{N} \varphi} }{p_t \ast e^{\frac{1}{N} \varphi}} = N \frac{\frac12 \Delta p_t \ast e^{\frac{1}{N} \varphi} }{p_t \ast e^{\frac{1}{N} \varphi}}
\]
and
\[
    \frac12 \Delta \varphi_t = \frac12 N \frac{\Delta p_t \ast e^{\frac1N \varphi}}{p_t \ast e^{\frac{1}{N} \varphi}} - \frac12 N \frac{(\nabla p_t \ast e^{\frac1N \varphi})^2}{(p_t \ast e^{\frac1N \varphi})^2} = \frac12 N \frac{\Delta p_t \ast e^{\frac1N \varphi}}{p_t \ast e^{\frac{1}{N} \varphi}} - \frac{1}{2N} (\nabla \varphi_t)^2,
\]
which concludes the proof. 
\end{proof}

Next, we derive global regularity estimates for the Cole--Hopf solution to the Hamilton--Jacobi--Bellman equation.

\begin{lemma}\label{lem:CH-regularity}
    Let $N \in \N$, let $\alpha> 0$, let $p,q \in [1,\infty]$, and let $\varphi \in B^\alpha_{p,q}$ be such that $\|\tfrac1N\varphi\|_\infty \leq \tfrac{\pi}{4}$. Then there exists $c>0$ such that the following estimates hold uniformly in $t\geq 0$:
    \begin{equation}\label{eq:CH-Besov}
        \sup_{t \geq 0} \left\|N \log p_t \ast e^{\tfrac1N\varphi}\right\|_{B^\alpha_{p,q}} + \sup_{t \geq 0} \sqrt{t} e^{ct} \left\|N \nabla \log p_t \ast e^{\tfrac1N\varphi}\right\|_{B^{\alpha}_{p,q}} \lesssim_{\alpha,p,q} \|\varphi\|_{B^\alpha_{p,q}},
    \end{equation}
    as well as
    \begin{equation}\label{eq:CH-Lebesgue}
         \sup_{t \geq 0} \left\|N \log p_t \ast e^{\tfrac1N\varphi}\right\|_{L^p} + \sup_{t \geq 0} \sqrt{t} e^{ct} \left\| N \nabla \log p_t \ast e^{\tfrac1N\varphi}\right\|_{L^p}\lesssim_p \|\varphi\|_{L^p},
    \end{equation}
    and
    \begin{equation}\label{eq:CH-Lebesgue-gradient}
         \sup_{t \geq 0} e^{ct} \left\| N \nabla \log p_t \ast e^{\tfrac1N\varphi}\right\|_{L^p} \lesssim_p \|\nabla \varphi\|_{L^p}.
    \end{equation}
\end{lemma}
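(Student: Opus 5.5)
The plan is to work directly with $g_t := p_t\ast e^{\frac1N\varphi}$, so that $\varphi_t = N\log g_t$. By Lemma~\ref{lem:Cole-Hopf}, $g_t$ takes values in a fixed rectangle $R$ bounded away from the negative half-axis, uniformly in $t$. On a neighbourhood of $\overline R$ the principal logarithm is holomorphic with bounded derivatives of all orders. Likewise $z\mapsto N(e^{z/N}-1)$ is smooth with uniformly bounded derivatives on the bounded set $\{|z/N|\le \pi/4\}$. We write
\[
\varphi_t = N\bigl(\log g_t - \log c_t\bigr) + N\log c_t, \qquad c_t:=\textstyle\int g_t = \int e^{\frac1N\varphi},
\]
so the mean is constant in time. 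The key observation is that $h:=e^{\frac1N\varphi}-1$ satisfies $\|Nh\|_X\lesssim \|\varphi\|_X$ for $X\in\{L^p, B^\alpha_{p,q}\}$. For $L^p$ this follows from $|e^z-1|\le C|z|$ on bounded sets. For Besov spaces with $0<\alpha<1$ it follows from the difference characterisation. For general $\alpha$ we will invoke the standard composition estimate (paralinearisation/Moser) for smooth $F$ with $F(0)=0$, namely $\|F(\psi)\|_{B^\alpha_{p,q}}\lesssim_{\|\psi\|_\infty}\|\psi\|_{B^\alpha_{p,q}}$, applied with $\psi=\varphi/N$. Since $\|\psi\|_\infty\le\pi/4$, the constant is uniform in $N$, and the factor $N$ scales out because the estimate is linear in $\psi$.

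For the undifferentiated bounds we apply the same composition principle to $G(w):=\log(1+w)$ evaluated at $w=g_t-1=p_t\ast h$. The values $1+w$ lie in $R$, so $G$ is smooth with bounded derivatives there and vanishes at $0$. Hence
\[
\|N\log g_t\|_X\lesssim \|N p_t\ast h\|_X\le \|Nh\|_X\lesssim\|\varphi\|_X,
\]
using contractivity of $p_t\ast$ on $L^p$ and on $B^\alpha_{p,q}$. The composition constant depends only on $\|p_t\ast h\|_\infty\le C$ and on $R$, hence is uniform in $t$. This gives the first parts of \eqref{eq:CH-Besov} and \eqref{eq:CH-Lebesgue}.

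For the gradient bounds we write $N\nabla\log g_t = N\nabla g_t / g_t = (\nabla p_t\ast Nh)\cdot g_t^{-1}$. Since $\nabla p_t$ kills constants, the spectral gap on $\T^d$ gives
\[
\|\nabla p_t\ast Nh\|_X\lesssim t^{-1/2}e^{-ct}\|Nh\|_X,
\]
which is the standard smoothing estimate $\|\nabla p_t\ast f\|_{X}\lesssim t^{-1/2}e^{-ct}\|f-\textstyle\int f\|_X$. It remains to bound multiplication by $g_t^{-1}$. In $L^p$ this is trivial, since $|g_t^{-1}|\le C$. In $B^\alpha_{p,q}$ we will use a paraproduct estimate:
\[
\|ab\|_{B^\alpha_{p,q}}\lesssim \|a\|_{B^\alpha_{p,q}}\|b\|_\infty+\|a\|_{L^p}\|b\|_{C^{\alpha}}.
\]
Then we bound $\|g_t^{-1}\|_{C^\alpha}\lesssim 1+\|g_t-1\|_{C^\alpha}$ by composition, and control $\|g_t-1\|_{C^\alpha}$ through $\|h\|_{C^\alpha}$.

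This last step is the main obstacle. The constant should only involve $\|\varphi\|_{B^\alpha_{p,q}}$, but it picks up $\|h\|_{C^\alpha}\lesssim N^{-1}\|\varphi\|_{C^\alpha}$, which is not controlled for $p<\infty$. I would first try to avoid the product altogether by the identity
\[
N\nabla\log g_t=\nabla\bigl(N\log g_t\bigr)\quad\text{and}\quad \partial_t(N\log g_t)=\tfrac12\Delta(N\log g_t)+\tfrac1{2N}(\nabla N\log g_t)^2.
\]
We then treat $\varphi_t$ through its mild form, Duhamel from time $t/2$, combined with the already-proven uniform bounds. The quadratic term carries the small factor $\frac1N\|\nabla\varphi_s\|_\infty$, which is bounded since $\|\frac1N\varphi_s\|_\infty\lesssim 1$ after the short-time smoothing. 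Alternatively, we accept constants depending on $\|\varphi\|_\infty/N\le\pi/4$ via the Moser-type product estimate $\|ab\|_{B^\alpha_{p,q}}\lesssim\|a\|_{B^\alpha_{p,q}}\|b\|_\infty+\|a\|_\infty\|b\|_{B^\alpha_{p,q}}$. For \eqref{eq:CH-Lebesgue-gradient} we write
\[
N\nabla g_t=p_t\ast(\nabla\varphi\, e^{\frac1N\varphi}).
\]
The function $\nabla\varphi\,e^{\varphi/N}$ has mean zero, being a gradient, so the spectral gap gives $e^{-ct}$ decay of $p_t\ast$ on $L^p$ without the factor $t^{-1/2}$. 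Dividing by $g_t$, which is bounded below, finishes the argument.
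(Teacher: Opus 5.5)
Your proof is correct in substance, but it handles the Besov gradient bound by a different route than the paper. The paper never forms a product. It writes $N(\log g_t-\log m)=N\Phi\bigl(p_t\ast(e^{\frac1N\varphi}-m)\bigr)$, with $m=\int e^{\frac1N\varphi}$ and $\Phi$ a smooth extension of $\log(\cdot+m)-\log m$. It applies the composition estimate at regularity $\alpha+1$ to this mean-free argument, gets the factor $t^{-1/2}e^{-ct}$ from heat smoothing $B^{\alpha}\to B^{\alpha+1}$ on mean-free functions, and composes once more with $e^z-1$ at level $\alpha$. The gradient bound is then read off from the $B^{\alpha+1}$ bound, and $N|\log m|\lesssim\|\varphi\|_{L^1}$ is handled separately.

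You instead compose with $\log(1+\cdot)$ at $p_t\ast h$, which makes the separate $\log m$ bound unnecessary. For the gradient you use the exact identity $N\nabla\log g_t=(\nabla p_t\ast Nh)\,g_t^{-1}$. This makes the $L^p$ gradient bounds and \eqref{eq:CH-Lebesgue-gradient} completely elementary, at the cost of needing a product estimate in $B^\alpha_{p,q}$.

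Of your two fixes for that product, commit to the Moser one. Apply it to $b:=g_t^{-1}-1=\Psi(p_t\ast h)$, with $\Psi(w)=(1+w)^{-1}-1$, and not to $g_t^{-1}$. With $a:=\nabla p_t\ast Nh$ you get
$\|a\|_{B^\alpha_{p,q}}(1+\|b\|_\infty)\lesssim t^{-1/2}e^{-ct}\|\varphi\|_{B^\alpha_{p,q}}$ and
$\|a\|_\infty\|b\|_{B^\alpha_{p,q}}\lesssim t^{-1/2}e^{-ct}\|\varphi\|_\infty\,N^{-1}\|\varphi\|_{B^\alpha_{p,q}}\lesssim t^{-1/2}e^{-ct}\|\varphi\|_{B^\alpha_{p,q}}$.
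The second bound is exactly where $\|\varphi\|_\infty/N\le\pi/4$ enters. If you leave the constant $1$ inside $b$, you get an extra term of size $\|a\|_\infty$. That term is controlled only by $t^{-1/2}\|\varphi\|_\infty$, not by $\|\varphi\|_{B^\alpha_{p,q}}$ when $\alpha<d/p$.

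Drop the Duhamel alternative. The factor $\frac1N\|\nabla\varphi_s\|_\infty\lesssim s^{-1/2}$ is bounded but not small, so the bootstrap on $[t/2,t]$ does not close as stated.
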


\begin{proof}
    By~\cite[Chapter 5.5, Theorem~2]{Runst1996}, if $\Phi$ is a smooth function with $\Phi(0) = 0$, then for any $\beta \geq 1$ and $p,q \in [1,\infty]$ and $f \in B^\beta_{p,q}(\R^d)$:
    \begin{equation}\label{eq:composition}
        \|\Phi(f)\|_{B^\beta_{p,q}(\R^d)} \lesssim_{\beta,\|f\|_\infty,\Phi} \|f\|_{B^\beta_{p,q}(\R^d)}.
    \end{equation}
    This is formulated for functions from $\R^m$ to $\R$, and extends to functions from $\C$ to $\C$ by identification of $\C$ with $\R^2$ and by treating the image componentwise. The same proof works with minor modifications on $B^\beta_{p,q}(\T^d)$, using the increment characterization of $B^\beta_{p,q}(\T^d)$ from \cite[Chapter 3.5.4]{Schmeisser1987}. Alternatively, the result on $\T^d$ can be deduced from that on $\R^d$ by periodization and restriction, as discussed in \cite[Section 5]{Ehrnstrom2018}. The same result is true for $\beta \in (0,1)$, which can be shown directly by the increment characterization of $B^\beta_{p,q}(\T^d)$ from \cite[Chapter 3.5.4]{Schmeisser1987}.
    
    We apply~\eqref{eq:composition} first with $\log(\cdot + m) - \log( m)$ 
    and with $m:=\int \exp(\tfrac1N \varphi) \mathd x$. This function is holomorphic on $\C\setminus\{-m+x: x \in (-\infty, 0]\}$.  By~\eqref{eq:support-heat-convolution} the convolution $p_t \ast \exp(\tfrac1N \varphi)$ takes values  in $R =  (- C, C) i + (c, C)$ for $c,C>0$, so $p_t \ast \exp(\tfrac1N \varphi) - m$ takes values in $R-m$, which is bounded away from $\{-m+x: x \in (-\infty, 0]\}$. 

    Therefore, we can find a smooth function $\Phi$ such that
    \[
        \Phi(z) = \log(z + m) - \log( m),\qquad z \in R-m,
    \]
    for example by multiplication with a smooth cutoff. Then~\eqref{eq:composition} gives for any $\beta>0$
    \[
        \left\|N \Phi \left(p_t \ast \left(e^{\tfrac1N\varphi} - m\right)\right) \right\|_{B^\beta_{p,q}} \lesssim_{\beta,p,q} N \left\|p_t \ast \left(e^{\tfrac1N\varphi} - m\right) \right\|_{B^\beta_{p,q}}.
    \]
    Now we choose a dyadic partition of unity for the Littlewood-Paley blocks which satisfies $\Delta_{-1}f = \int f \mathd x$ for all $f \in \mathcal S'(\T^d)$. The Besov norms for any two dyadic partitions of unity are equivalent, see \cite[Chapter~3.5, Theorem~1]{Schmeisser1987} so this change costs only a constant depending on $\beta,p,q$. Then
    \begin{align*}
        \left\|p_t \ast \left(e^{\tfrac1N\varphi} - m\right)\right\|_{B^\beta_{p,q}} & = \left\| p_t \ast \left(e^{\tfrac1N\varphi} - 1\right) - (m-1)\right\|_{B^\beta_{p,q}} \\
        & \simeq_{\beta,p,q} \left\| \left(2^{j\beta} \|\Delta_j p_t \ast (e^{\tfrac1N\varphi} - 1)\|_{L^p} \right)_{j \geq 0} \right\|_{\ell^q_j} \\
        & \lesssim_p \left\| \left(2^{j\beta} e^{-c t 2^{2j}} \|\Delta_j (e^{\tfrac1N\varphi} - 1)\|_{L^p} \right)_{j \geq 0} \right\|_{\ell^q_j} \\
        & \lesssim_{\beta,p,q} e^{-c t} \min\bigl\{ \|e^{\tfrac1N\varphi}  - 1\|_{B^{\beta}_{p,q}}, t^{-1/2}\|e^{\tfrac1N\varphi}  - 1\|_{B^{\beta-1}_{p,q}} \bigr\},
    \end{align*}
    for some $c>0$, where we applied~\cite[Lemma~2.4]{Bahouri2011} in the third line, which by Poisson summation extends from $\R^d$ to $\T^d$, see Chapter~3 of~\cite{Gubinelli2015EBP} for similar arguments, and in the last step we used that
    \[
        e^{-c t 2^{2j}} \lesssim \min\{e^{-ct}, t^{-1/2} 2^{-j} e^{-ct}\},
    \]
    with changing $c>0$. Now it remains to apply~\eqref{eq:composition} once more, this time with $\Phi(z) = e^z-1$, which gives the first bound if $\beta>0$ and both bounds if $\beta>1$:
    \begin{align}\nonumber
        N \left\| \log p_t \ast e^{\tfrac1N\varphi} - \log m\right\|_{B^\beta_{p,q}} & \lesssim N e^{-c t} \min\left\{\|\frac1N\varphi\|_{B^\beta_{p,q}}, t^{-1/2} \|\frac1N\varphi\|_{B^{\beta-1}_{p,q}}\right\} \\
        & = e^{-ct} \min\left\{\|\varphi\|_{B^\beta_{p,q}}, t^{-1/2} \|\varphi\|_{B^{\beta-1}_{p,q}}\right\}. \label{eq:CH-regularity-pr1}
    \end{align}
    The bound for the second term in~\eqref{eq:CH-Besov} now follows from
    \[
        N \left\| \nabla \log p_t \ast e^{\tfrac1N\varphi} \right\|_{B^\alpha_{p,q}} \lesssim N \left\| \log p_t \ast e^{\tfrac1N\varphi} - \log m\right\|_{B^{\alpha+1}_{p,q}},
    \]
    and then we apply~\eqref{eq:CH-regularity-pr1} with $\beta=\alpha+1$. For the first estimate in~\eqref{eq:CH-Besov}, 
    it now suffices to estimate
    \[
        N |\log m| = N \left| \int_0^1 \partial_s \left(\log \int e^{s\tfrac1N \varphi} \mathd x \right)\mathd s\right| \leq N  \int_0^1 \left| \frac{\int e^{s\tfrac1N \varphi} \frac1N \varphi \mathd x}{\int e^{s\tfrac1N  \varphi} \mathd x}\right| \mathd s \lesssim \|\varphi\|_{L^1} \lesssim \|\varphi\|_{B^\alpha_{p,q}},
    \]
    where the last step used that $\alpha>0$. The estimate~\eqref{eq:CH-Lebesgue} follows by a similar differentiation and integration in an artificial parameter $s$. The bound~\eqref{eq:CH-Lebesgue-gradient} follows by the chain rule for the gradient, and another application of the spectral gap estimate for $\Delta$ on $\T^d$, that is $\|p_t \ast \Delta_j f\|_{L^p} \lesssim e^{-ct2^{2j}} \|\Delta_j f \|_{L^p}$ for $j \geq 0$.
\end{proof}

\section{Numerical experiments}\label{sec:numerics}

For the numerical experiments, we rescale the diffusion and noise in \eqref{eq:reg_DK} and consider
\begin{equation}
\mathrm{d}u_t
=
D\Delta u_t\,\mathrm{d}t
+
\frac{1}{\sqrt{N}}
\nabla\cdot
\left(
K_\varepsilon*
\left(
\sqrt{2D u_t^+}\,\mathrm{d}W_t
\right)
\right),
\label{eq:reg_DK_diffusivity}
\end{equation}
which amounts to rescaling time and considering \eqref{eq:reg_DK} on the time scale $2 D t$.

Throughout this section, we work on the one-dimensional torus $\mathbb{T}$ and
consider the mollifying kernel $K_\varepsilon$ with $\mathcal F K_\varepsilon = \mathds{1}_{[-\varepsilon^{-1},\varepsilon^{-1}]}$.
 This kernel violates the assumptions of Section~\ref{sec:main} because the indicator function is not smooth, but at the price of a logarithmic loss in the error estimates we could extend the analysis to this case, see Lemma~8.7 of \cite{Gubinelli2017KPZ}. Assuming that $\varepsilon = L^{-1}$ with $L \in \N$, we therefore restrict the simulation to the spectral Galerkin space
\[
V_L
=
\operatorname{span}
\left\{
e^{2\pi ikx}:|k|\leq L
\right\},
\]
and we approximate the solution $u$ at discrete times $t_n=n\Delta t$ by
\[
\rho_L^n(x)
=
\sum_{|k|\leq L}
\widehat{\rho}_k^n e^{2\pi ikx}.
\]

As the heat semigroup is a diagonal operator in Fourier space, its action
over one time step is given by
\[
e^{D\Delta t\,\partial_{xx}}e^{2\pi ikx}
=
e^{-D(2\pi k)^2\Delta t}e^{2\pi ikx}.
\]
The nonlinear stochastic term is computed pseudospectrally on an
equidistant physical-space grid
\(
x_j=\frac{j}{M},
\text{ for } j=0,\ldots,M-1,
\)
where $M\geq 2L+1$. Firstly, $\rho_L^n$ is evaluated at grid points from its Fourier coefficients. 
Then, at each grid point, the stochastic flux increment is computed as follows
\[
F_j^n
=
\frac{1}{\sqrt{N}}
\sqrt{2D\max\{\rho_L^n(x_j),0\}}
\sqrt{M\Delta t}\,\xi_j^n,
\qquad
\xi_j^n\sim\mathcal{N}(0,1),
\]
where \(\xi_j^n\) are independently drawn across grid points and time
steps.
Lastly, the stochastic flux is projected into the spectral Galerkin space by 
\[
\widehat{F}_k^n
=
\frac{1}{M}
\sum_{j=0}^{M-1}
F_j^n e^{-2\pi ikj/M}, \qquad |k|\leq L.
\]

The derivative then simply corresponds to multiplication
with $2\pi ik$. 
Combining this with the heat-semigroup
multiplier gives the update
\[
\widehat{\rho}_k^{\,n+1}
=
e^{-D(2\pi k)^2\Delta t}
\left(
\widehat{\rho}_k^n
+
2\pi ik\,\widehat{F}_k^n
\right),
\qquad |k|\leq L.
\]
Since both the diffusion and the conservative stochastic contribution
vanish for $k=0$, the zero Fourier coefficient remains unchanged and the
total mass is preserved.

\subsection{Sample trajectories}\label{sect:sample_traj}

We first illustrate the qualitative behavior of the spectral Galerkin
approximation by considering individual trajectories with several particle
numbers and spectral resolutions. The initial probability density is chosen as the smooth positive profile
\begin{equation}\label{eq:positive_molifier}
\rho_0(x)
=
\rho_{\min}
+
(\rho_{\max}-\rho_{\min})
\psi\left(\frac{d_{\mathbb T}(x,x_c)}{r}\right),
\end{equation}
where $0<\rho_{\min}<1<\rho_{\max}$, where $\psi(x) = C \exp(-(1-x^2)^{-1})$ on $(-1,1)$ and $0$ outside, with $C$ chosen so that the integral of $\psi$ equals $1$, and where $x_c = 0.7$. The value of $r$ is chosen so that $\int_\T \rho_0(x) \mathd x = 1$.

For the numerical initialization, $\rho_0$ is sampled at the uniform grid
points $x_j=j/(2L+1)$ and 
renormalized to ensure unit mass.

For each particle number $N$, the
expected counts $N\Delta x\,\rho_0(x_j)$ are converted into deterministic
integer counts by taking their floors and assigning the remaining particles
to the entries with the largest fractional remainders. The resulting
histogram,
\[
\rho_{h,j}^0=\frac{n_j}{N\Delta x},
\qquad
\sum_j n_j=N,
\]
therefore has exact discrete unit mass, while its minimum and maximum differ
slightly from the prescribed values $\rho_{\min}$ and $\rho_{\max}$.

\begin{figure}[!htbp]
    \centering
    \captionsetup{font=footnotesize}
    \includegraphics[
        width=\linewidth
    ]{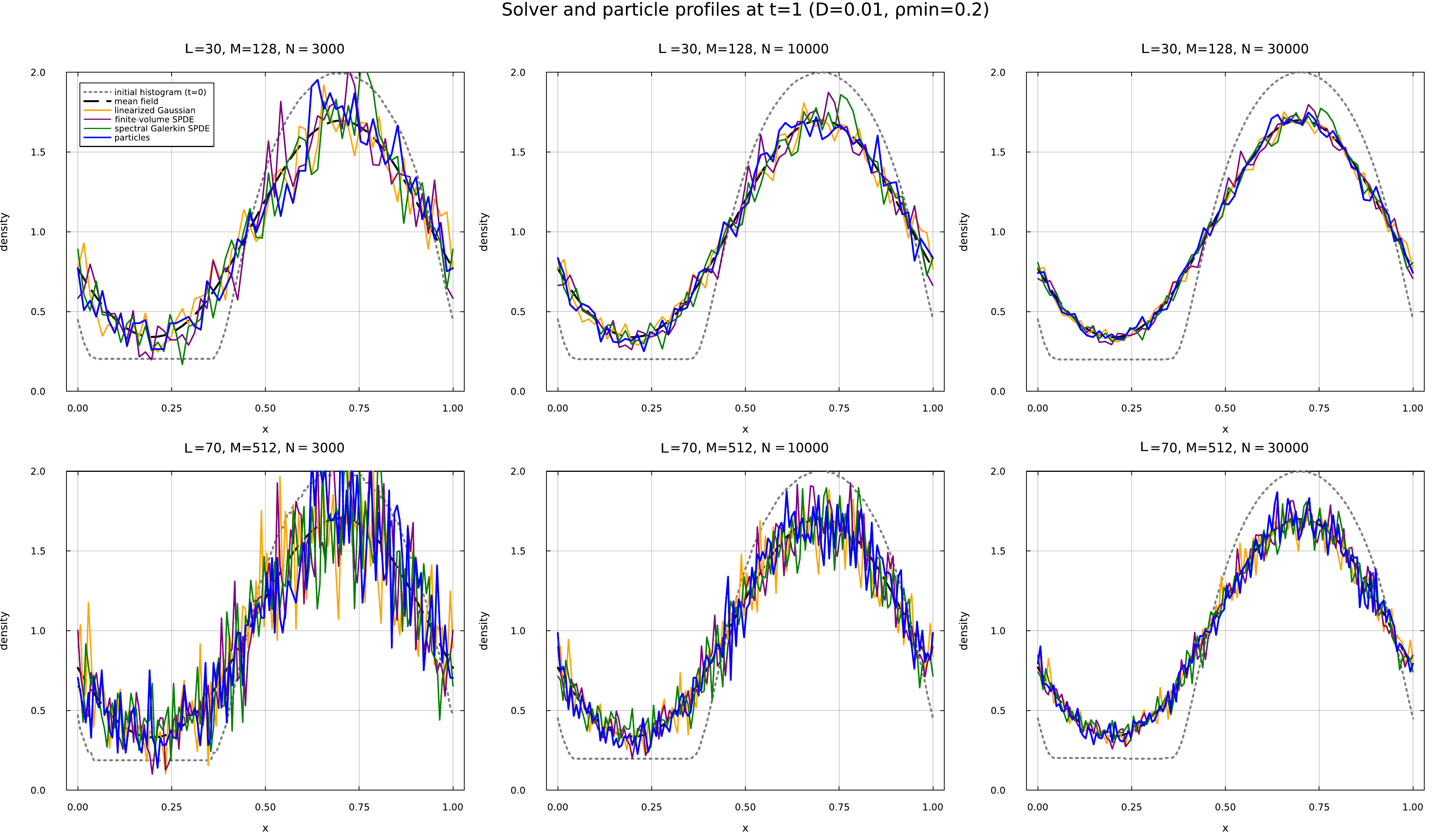}
    \caption{Sample trajectories at time $T=1$ for particle numbers
    $N=3{,}000$, $10{,}000$, and $30{,}000$ from left to right.
    The upper row uses spectral cutoff $L=30$ and nonlinear grid size
    $M=128$, while the lower row uses $L=70$ and $M=512$. Each panel
    shows the initial histogram, the deterministic finite-volume mean field,
    and single realizations of the finite-volume linearized-Gaussian,
    finite-volume Dean--Kawasaki, spectral Galerkin Dean--Kawasaki,
    and particle approximations.}
    \label{fig:sample_trajectories}
\end{figure}

This histogram is used to initialize all methods shown in Figure~\ref{fig:sample_trajectories}. For the spectral Galerkin scheme, the histogram values are treated as nodal values and transformed into Fourier coefficients. For comparison, we also show a semi-implicit finite-volume discretization with centered conservative fluxes, the corresponding deterministic finite-volume mean field, and a finite-volume linearized-Gaussian approximation. The finite-volume solvers treat the histogram as a piecewise constant initial density. The linearized-Gaussian approximation starts with zero fluctuation. We also show a particle histogram on the same cells, where particles are initially placed at the grid points according to the histogram.

Figure~\ref{fig:sample_trajectories} shows that fluctuations increase with $L$ and for moderate $N$ and large $L$ the fluctuations may dominate.

\subsection{Monte Carlo Experiment for Loss of Positivity}
\label{sec:positivity_experiment}

We next examine how frequently the spectral Galerkin approximation develops
negative values. The simulations are performed on the unit
torus $\mathbb T=\mathbb R/\mathbb Z$ with
\(
D=0.01,\,\Delta t=10^{-4},\,\text{and }T=1.
\)
We consider four spectral cutoffs $L$, with the particle-number parameter
chosen as a function of $L$.  

The nonlinear stochastic term is evaluated on a physical-space grid whose
size $M(L)$ also depends on the cutoff, with $M(L)$ being the smallest power of $2$ that is bigger than $4L+1$. The choice of the pairs $(L,N(L))$ is motivated by the expected size of
the projected fluctuations over short times, which by a variation of Corollary~\ref{cor:vunif} around a point $x$ is of the order $\rho_0(x)^{1/2} \kappa$, so that we require 
$\rho_{\min}
\gg \kappa^2 =
\frac{L\log L}{N}$, where we recall that $d=1$.

The particle numbers are chosen so that this quantity remains approximately
constant across the four cutoffs:
\[
\frac{L\log L}{N}
\approx
2.485\times10^{-3}.
\]
The experiment
is designed to investigate the changes in fluctuation behavior
produced by varying the minimum and maximum of the initial density through
$\rho_{\min}$ and $\rho_{\max}$. The parameter $\rho_{\min}$ determines the initial distance of
the density from zero. The observed dependence of the fluctuation size on $\rho_{\max}$ is also suggested by our analysis, in particular the estimate~\eqref{eq:maximum-negative-u} for the negative part. For the family of initial densities introduced above, we use $\rho_{\max}\in\{1.7,2.8,4.0\}$
and
\[
\rho_{\min}
=
s\frac{128\log(128)}{250{,}000}\approx
2.485 s \times10^{-3},
\qquad
s\in\{2,2.5,3,\ldots,15\}.
\]

The continuum density is evaluated on a uniform validation grid of
$16{,}384$ points and normalized using the periodic trapezoidal rule. A
single fine-grid Fourier transform is computed through the largest cutoff
$L_{\max}=128$. For each smaller cutoff, the initial coefficients are
obtained by truncation. Parameters for which the truncated initial density is already negative are omitted and not evolved.

The stochastic evolution is computed using the spectral Galerkin scheme described above. After each time step, the spectral density is reconstructed on the finer \(M(L)\)-point physical grid, and its minimum is evaluated. If the density is negative at any grid point, the evolution is stopped and the trajectory is recorded as negative. Otherwise, it is evolved until the terminal time \(T=1\). By repeating this experiment over \(1000\) trajectories, we estimate the probability
\[P\left( \min_{\substack{1\leq n\leq T/\Delta t\\0\leq j<M(L)}} \rho_L^n(x_j)<0 \right). \]
To select the time-step size, we performed a preliminary
comparison, not shown here, using $\Delta t=10^{-3}$, $10^{-4}$, and
$10^{-5}$ with $200$ realizations per parameter point. A noticeable change
was observed between $\Delta t=10^{-3}$ and $\Delta t=10^{-4}$, whereas the
results for $\Delta t=10^{-4}$ and $\Delta t=10^{-5}$ showed little visible
difference at this Monte Carlo resolution. We therefore use
$\Delta t=10^{-4}$ to reduce the computational cost. Figure~\ref{fig:positivity_probability} displays the dependence on $\rho_{\min}$ and $\rho_{\max}$.

\begin{figure}[!htbp]
    \centering
    \captionsetup{font=footnotesize}
    \includegraphics[
        width=\linewidth,
        height=0.82\textheight,
        keepaspectratio
    ]{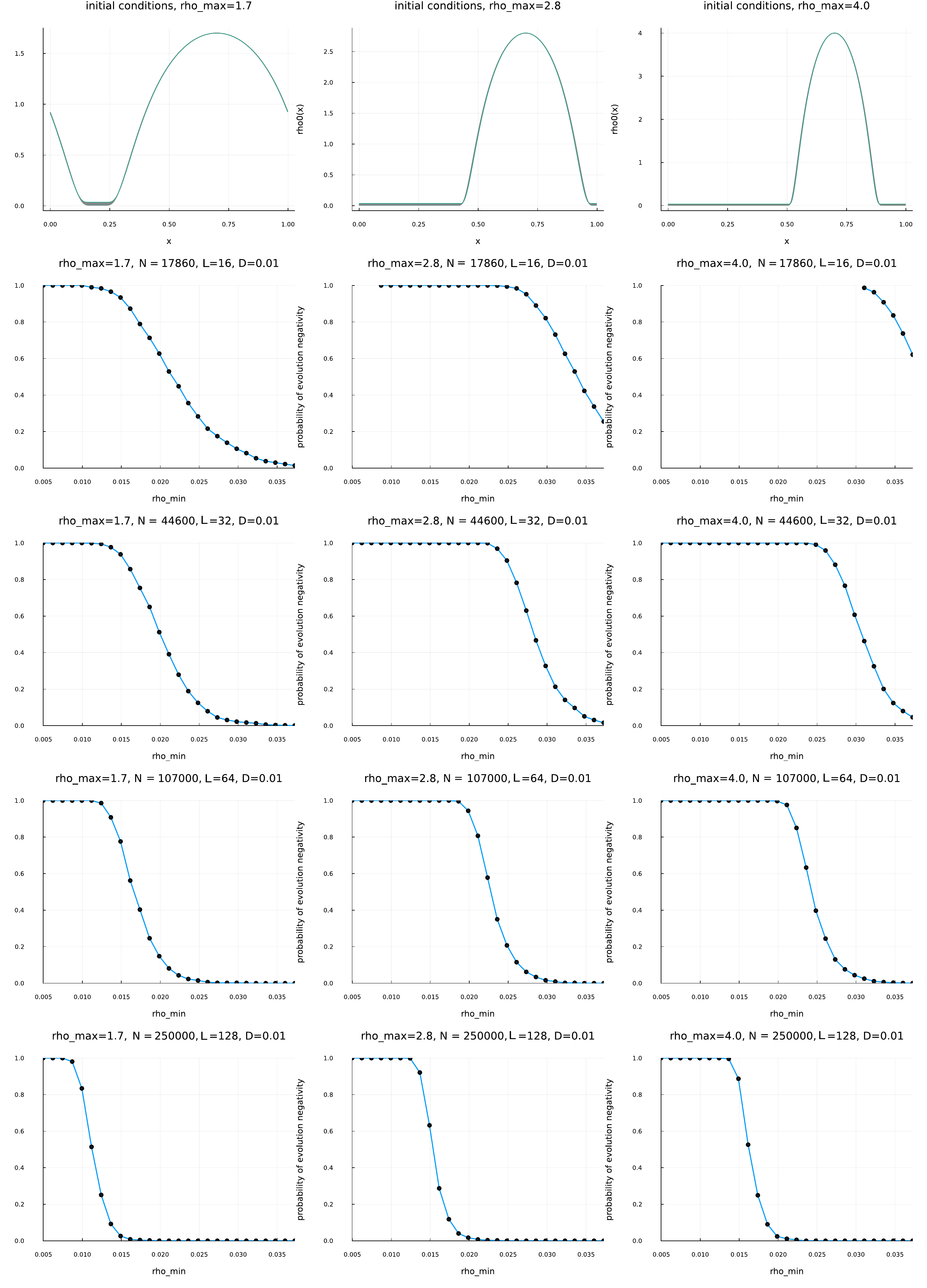}
    \caption{Dependence of the estimated probability of detected loss of
    positivity on the minimum and maximum of the initial density. 
    The first row shows the
    corresponding families of initial probability densities. The subsequent
    rows show the estimated probabilities.} 
    \label{fig:positivity_probability}
\end{figure}

\subsection{Weak Error Comparison}
\label{sec:third_moment_comparison}

We next compare the third moments obtained with the finite-volume and
spectral solvers to the exact finite-particle moments and the
linearized-Gaussian approximation. Particular care is taken in constructing
the initial data so that discrepancies already present at time $t=0$ are
negligible compared with the weak errors generated during the evolution.

 In this experiment, the initial particle histogram is constructed from the projection onto the Fourier modes
$|k|\leq10$ of the positive smooth mollifier profile defined in
\eqref{eq:positive_molifier}, with $\rho_{\min}=0.2$ and
$\rho_{\max}=2$, as described above. In the previous experiments, the same piecewise constant deterministic histogram was used to initialize the finite-volume and spectral solvers, while all particles belonging to a cell were placed at its grid point. This concentrates the whole cell mass at one point, instead of distributing it throughout the cell, which creates a noticeable discrepancy at time \(0\). To make the particle configuration more consistent with the finite-volume interpretation, we now retain the same counts per cell but distribute the particles equidistantly within their corresponding cells. 
The spectral solver is then initialized from the redistributed particle configuration via Galerkin approximation: 
\[ \widehat{\rho}_k(0) = \frac1{N} \sum_{i=1}^{N} e^{-2\pi ikX_i(0)}, \qquad |k|\leq L. \]
For the particle initial configurations that we consider, this does produce positive densities.

Thus, the finite-volume and spectral solvers use two different representations
of the same deterministic particle configuration such that they both have negligible initial error.

In the experiment we choose the strictly positive initial profile such that we are in a regime
in which loss of positivity is expected to be very unlikely. The spectral
solver is therefore tested in the regime relevant to the
$\varepsilon^\alpha$ weak-error rate established in
Corollary~\ref{cor:moments}, rather than in one dominated by initialization
effects or negative excursions.

\paragraph{Exact finite-particle reference.}

For deterministic initial positions $X_i(0)$, define 
\[
    m_{r,i}(t) = e^{tD\Delta}(\varphi^r)(X_i(0)),\qquad r=1,2,3.
\]
By particle independence, we can explicitly compute the moments of the empirical measure: 
\[
\mathbb E[\langle \mu_t, \varphi\rangle]
=\frac{1}{N}\sum_{i=1}^{N}m_{1,i}(t),\quad 
\operatorname{Var}(\langle \mu_t, \varphi\rangle)
=\frac{1}{N^2}\sum_{i=1}^{N}
\left(m_{2,i}(t)-m_{1,i}(t)^2\right),
\]
and
\[
\E[(\langle \mu_t,\varphi\rangle - \E[\langle \mu_t, \varphi\rangle])^3]
=\frac{1}{N^3}\sum_{i=1}^{N}
\left(m_{3,i}(t)-3m_{1,i}(t)m_{2,i}(t)+2m_{1,i}(t)^3\right).
\]
The semigroup is evaluated spectrally 
with Fourier coefficients computed analytically when available and by fine-grid quadrature otherwise. 

\paragraph{Exact linearized-Gaussian reference.}

The moments of the linearized-Gaussian observable can also be computed exactly
from its mean and variance. The centered third moment is identically zero by Gaussianity.

\paragraph{Test functions.}

The first four rows of Figure~\ref{fig:third_moment_weak_error} use the
exact-Hölder test functions
\[
\varphi_\alpha(x)
=
\left|\sin\bigl(\pi(x-0.7)\bigr)\right|^\alpha,
\qquad
\alpha\in\{2.5,3.7,4.9,6.1\}.
\]
The final row uses the band-limited periodic Gaussian
\[
\varphi_{\mathrm{BL}_{1}}(x)
=
1+
2\sum_{k=1}^{100}
\exp\left[-\frac12(2\pi\sigma k)^2\right]
\cos\left(2\pi k(x-0.17)\right),
\qquad
\sigma
=
\frac{0.05}{\sqrt{2\log(10^3)}}.
\]

\begin{figure}[!htbp]
    \centering
    \captionsetup{font=footnotesize}
    \includegraphics[
        width=\linewidth,
        height=0.9\textheight,
        keepaspectratio
    ]{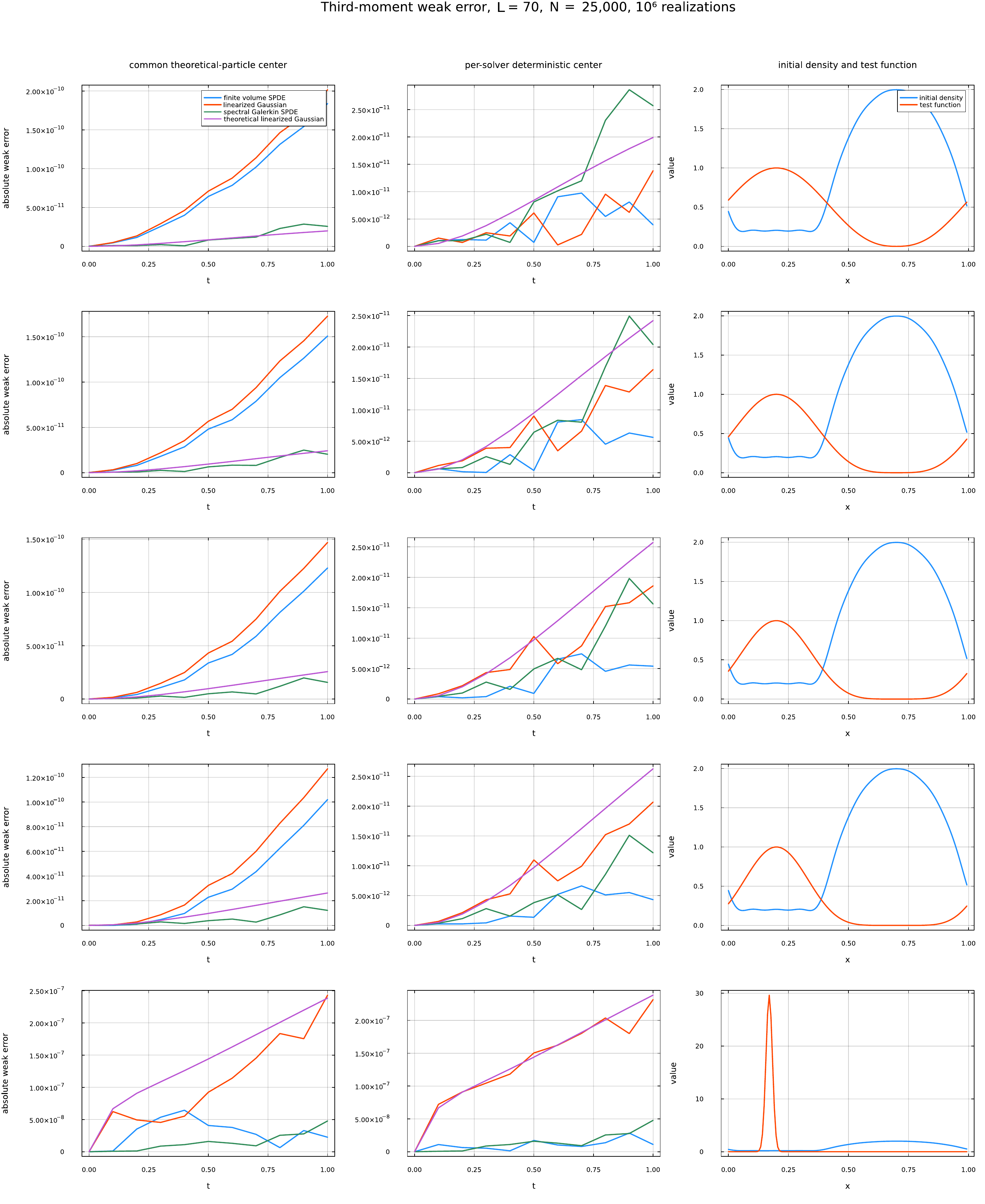}
    \caption{Third-moment weak errors for $N=25{,}000$, spectral cutoff
$L=70$, auxiliary spectral grid size $M=256$, timestep
$\Delta t=10^{-3}$, terminal time $T=1$, and $10^6$ Monte Carlo
realizations for each stochastic solver. The left column uses the exact
finite-particle mean as a common center, while the middle column uses the
deterministic mean computed by evolving the heat equation with the respective numerical solver. The right column shows, in
blue, the projection onto the Fourier modes $|k|\leq10$ of the initial
profile from~\eqref{eq:positive_molifier} with
$\rho_{\min}=0.2$ and $\rho_{\max}=2$. The projected profile has unit mass
and numerical minimum approximately $0.1893$. The corresponding test
function is shown in orange. From top to bottom, the rows use the
exact-Hölder functions with $\alpha=2.5$, $3.7$, $4.9$, and $6.1$, followed
by the band-limited Gaussian centered at $x=0.17$.} 
    \label{fig:third_moment_weak_error}
\end{figure}

Figure~\ref{fig:third_moment_weak_error} shows a clear distinction between common-center and per-solver-centered errors. With a common center, the third moment also inherits errors from the first two moments and is therefore sensitive to the numerical propagation of the deterministic heat equation. This accounts for the larger finite-volume errors compared to the spectral solver. The same mechanism explains the separation between the numerical and theoretical linearized-Gaussian curves, apart from Monte Carlo uncertainty.

Centering each solver by its own deterministic mean largely removes this effect and more directly isolates the third centered fluctuation moment. For smoother test functions, these results show that the linearized-Gaussian model does not reproduce the finite-particle third-order statistics, indicating an advantage of the nonlinear Dean--Kawasaki approximation for nonlinear observables beyond second moments.

The dependence on test-function regularity is also evident. For small $\alpha$, the slower Fourier decay limits the spectral method at fixed cutoff. As $\alpha$ increases, the spectral error decreases relative to the finite-volume and linearized-Gaussian approximations, consistently with the $\varepsilon^\alpha$ dependence in Corollary~\ref{cor:moments}.

\subsection{Convergence with Respect to the Spectral Cutoff}
\label{subsec:spectral_convergence}

We conclude by examining the weak-error convergence of the spectral
Galerkin approximation with respect to the cutoff $L$. Motivated by the example considered in \cite{Cornalba2023}, we work on the
$2\pi$-torus and use the same initial density,
\[
\rho_0(x)
=
C_\rho
\left[
3-
2\exp\left(
-\frac{\sin^6(x/2)}{0.05}
\right)
\right],
\]
where $C_\rho$ is chosen such that the density has mass $1$.
The remaining parameters are $D=0.01$, $N=11{,}000$, and $T=1$.
For each $L=2,\ldots,15$, the spectral moments are estimated using
$10^6$ Monte Carlo realizations. The finite-particle reference moments are
computed exactly, as described above.
In each panel, the dashed line is a
least-squares fit of the form
$
E_L\approx CL^{-p}.
$

\begin{figure}[!htbp]
    \centering
    \captionsetup{font=footnotesize}
    \includegraphics[width=\linewidth]{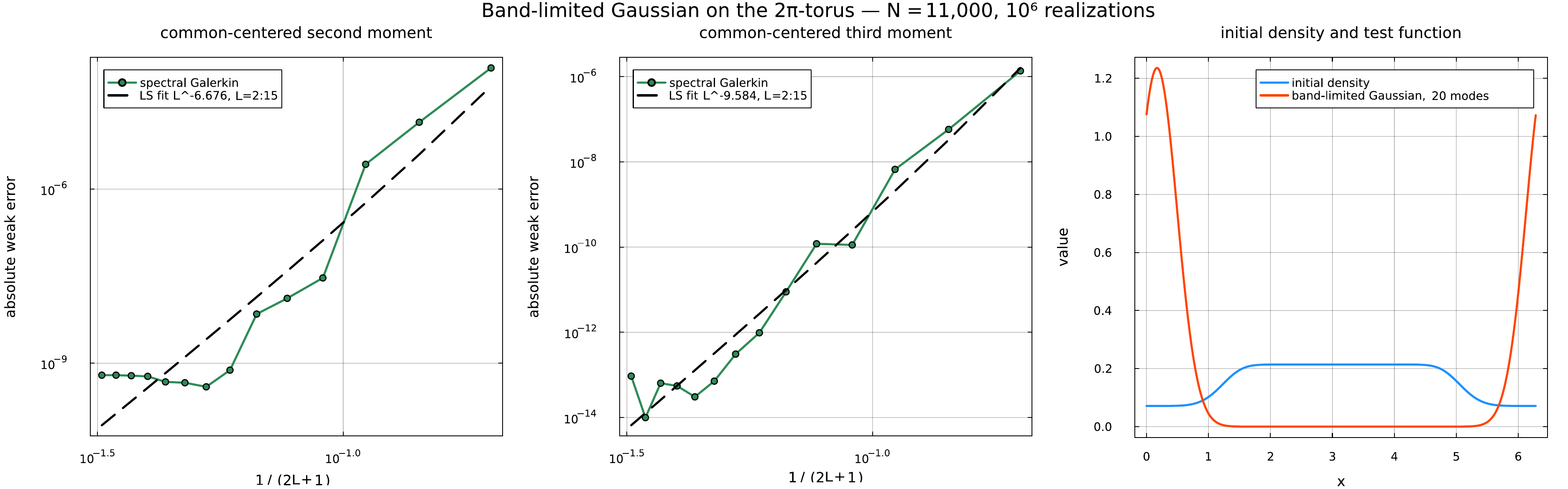}
    \caption{Common-centered spectral weak-error convergence for the
    periodic band-limited Gaussian test function. 
    The left and middle panels show the second- and
    third-moment weak errors, respectively. The green curves represent the
    spectral Galerkin errors. 
    The fitted exponents are
    $6.676$ and $9.584$, respectively. The right panel shows the normalized
    initial density and the unit-mass band-limited Gaussian centered at
    $x=0.17$ with Fourier modes $|k|\leq20$.}
    \label{fig:band_limited_convergence}
\end{figure}
Figure~\ref{fig:band_limited_convergence} considers the unit-mass
band-limited Gaussian test-function
\[
\varphi_{\mathrm{BL}_2}(x)
=
\frac{1}{2\pi}
\sum_{|k|\leq20}
\exp\left(-\frac12\sigma^2k^2\right)
e^{ik(x-0.17)},
\qquad
\sigma
=
\frac{1.2}{\sqrt{2\log(10^3)}}.
\]
Using the common theoretical-particle centering introduced above, the
empirical fits are
$
E_{2,L}^{\mathrm{common}}
\approx C_2L^{-6.676}
$ 
for the second moment and
$
E_{3,L}^{\mathrm{common}}
\approx C_3L^{-9.584}
$ 
for the third moment. Both errors decrease rapidly over the considered
range. Since the test function is smooth, the
$\varepsilon^\alpha$ estimate from Corollary~\ref{cor:moments} is available
for arbitrarily large finite values of $\alpha$. 
Nevertheless, 
arbitrarily fast rate cannot be observed
numerically.

\begin{figure}[!htbp]
    \centering
    \captionsetup{font=footnotesize}
    \includegraphics[width=\linewidth]{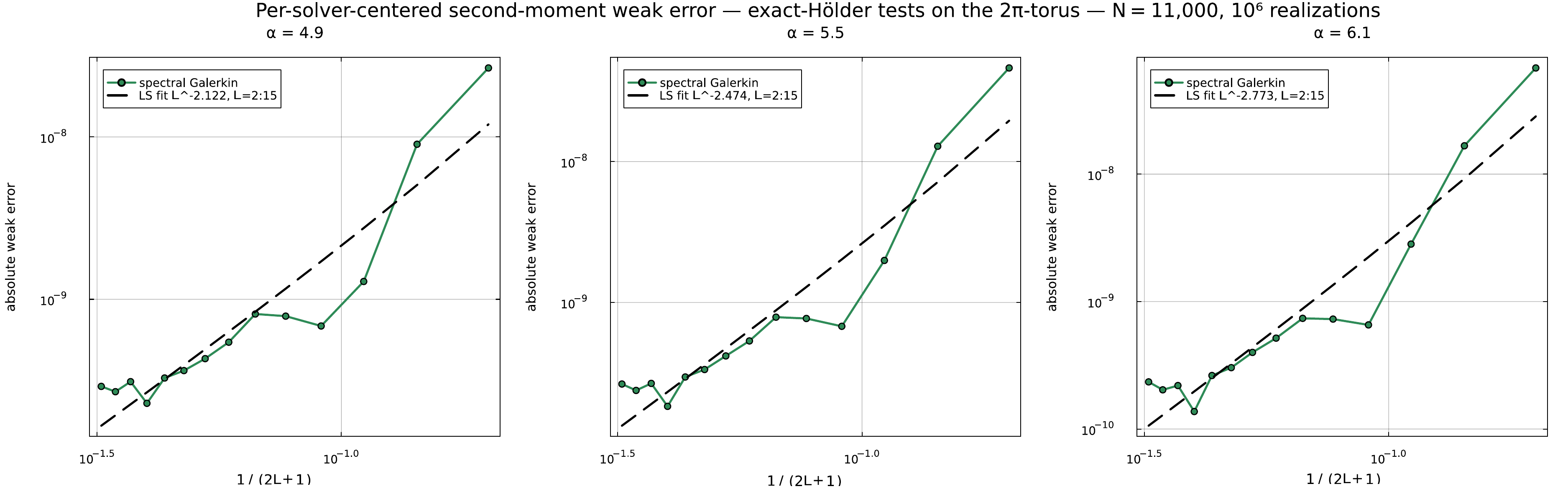}
    \caption{Per-solver-centered second-moment spectral weak errors for the
    exact-Hölder test functions
    $\varphi_\alpha(x)=|\sin((x-0.7)/2)|^\alpha$, with
    $\alpha=4.9$, $5.5$, and $6.1$ from left to right. 
    The green curves show the spectral Galerkin errors for
    $L=2,\ldots,15$, plotted against $(2L+1)^{-1}$. 
    The fitted
    exponents are $2.122$, $2.474$, and $2.773$, respectively.}
    \label{fig:holder_convergence}
\end{figure}

Figure~\ref{fig:holder_convergence} shows the per-solver-centered
second-moment error for the exact-Hölder test functions
\[
\varphi_\alpha(x)
=
\left|
\sin\left(\frac{x-0.7}{2}\right)
\right|^\alpha,
\qquad
\alpha\in\{4.9,5.5,6.1\}.
\]

We observe that the fitted exponent again increases with the smoothness of the test
function, showing the regularity dependence of the spectral weak error.
The improvement is, however, weaker than a direct
$\varepsilon^\alpha$ dependence would suggest. 
All exponents are fitted over $L=2,\ldots,15$ and may be influenced by pre-asymptotic behavior, statistical and discretization errors, cancellation, and finite precision. The results therefore indicate the qualitative dependence of the spectral weak error on test-function regularity, but do not determine a precise relation between $\alpha$ and the observed convergence rate.

\appendix
\section*{Appendix}
\section{Proof of Proposition \ref{prop:wp}}\label{Appendix_wp}

\begin{proof} 
Let $H:=L^2(\mathbb T^d)$ and $U:=L^2(\mathbb T^d; \mathbb R^d)$ and let $A:=\frac12\Delta$ with domain
$D(A)=H^2(\mathbb T^d)$.  Then $A$ generates a $C_0$-semigroup on $H$.
For $u\in H$, define $B_\varepsilon(u)\in L_2(U,H)$ (space of Hilbert–Schmidt operators from
$U$ to $H$) by $B_\varepsilon(u)g := \frac{1}{\sqrt{N}}\nabla \cdot \bigl(K_\varepsilon * \bigl(f(u^+)g)\bigr)$, for
$g\in U$. Hence, for every $u,v \in H$,
\begin{equation}\label{B_eps_diff}
    \|B_\varepsilon(u) - B_\varepsilon(v)\|_{L_2(U,H)}^2 =\frac{1}{N}\|\nabla K_\varepsilon\|_{L^2}^2 \,\|f(u^+) - f(v^+)\|_{L^2}^2,
\end{equation}
and 
\begin{equation}\label{B_eps}
    \|B_\varepsilon(u)\|_{L_2(U,H)}^2
=
\frac{1}{N}\|\nabla K_\varepsilon\|_{L^2}^2 \,\|f(u^+)\|_{L^2}^2.
\end{equation}

Assume first that  $f$ is globally Lipschitz, with Lipschitz constant $L_f$. Since $r \mapsto r^+$ is $1-$Lipschitz, 
\eqref{B_eps_diff} gives
\[
\|B_\varepsilon(u)-B_\varepsilon(v)\|_{L_2(U,H)}
\le
\frac{\|\nabla K_\varepsilon\|_{L^2}}{\sqrt{N}}\,L_f\,\|u-v\|_{L^2}.
\]
Moreover, global Lipschitz continuity implies linear growth 
and hence, by \eqref{B_eps} and using that $\T^d$ has finite measure, 
$B_\varepsilon:H\to L_2(U,H)$ is globally Lipschitz and has linear growth. The well-posedness theorem for  stochastic evolution equations with
globally Lipschitz coefficients driven by a cylindrical Wiener process from \cite[Ch. 7]{DaPrato2014} yields a unique
mild solution $u$ to \eqref{eq:mild_formulation}. It remains to show that $u$ is a weak solution. By the preceding
linear growth estimate,
\[
\mathbb{E}\int_0^T
\|B_\varepsilon(u_s)\|_{L_2(U,H)}^2\mathd s
\le
C_{\varepsilon,N,f,T}
\Bigl(1+\mathbb E\sup_{s\in[0,T]}\|u_s\|_H^2\Bigr)<\infty .
\]
Thus,  the
square-integrability assumption on the
diffusion term required in the weak--mild equivalence theorem \cite[Thm 6.5]{DaPrato2014} is satisfied and $u$ is also a weak solution in the sense of Definition~\ref{def:weak_sol}.

We now assume only that $f$ is continuous and has at most linear growth and we show that \(B_\varepsilon:H\to L_2(U,H)\) is continuous and satisfies a linear
growth bound. Using the linear growth assumption on $f$ it follows as in the Lipschitz case that $B_\varepsilon$ has linear growth.

To prove continuity, let \(u_n\to u\) in \(H\). Since \(r\mapsto r^+\)
is Lipschitz, \(u_n^+\to u^+\) in \(L^2(\mathbb T^d)\), hence in measure. By the
continuity of \(f\), we also get $f(u_n^+)\to f(u^+)$ in measure. Furthermore, the linear growth of \(f\) gives
\[
|f(u_n^+)-f(u^+)|^2
\le
C\bigl(1+|u_n|^2+|u|^2\bigr).
\]
Since \(u_n\to u\) in \(L^2\), the family \((|u_n|^2)_{n\ge1}\) is uniformly
integrable in \(L^1(\mathbb T^d)\). Hence the family $\bigl(|f(u_n^+)-f(u^+)|^2\bigr)_{n\ge1}$ is uniformly integrable in \(L^1(\mathbb T^d)\). Vitali's theorem
therefore yields $f(u_n^+)\to f(u^+)$ in $L^2(\mathbb T^d)$. Using \eqref{B_eps_diff}, we conclude that
\[
\|B_\varepsilon(u_n)-B_\varepsilon(u)\|_{L_2(U,H)}
\to 0 .
\]
Thus \(B_\varepsilon\) is continuous and has linear growth.  By the existence theorem for
stochastic evolution equations with continuous coefficients of linear growth,
\cite[Theorem 8.1]{DaPrato2014}, there exists a
probabilistically weak mild solution. As in the Lipschitz case we see that $u$ is also weak in the sense of Definition \ref{def:weak_sol}.
\end{proof}

\begin{remark}
The cited result \cite[Theorem~8.1]{DaPrato2014} for the existence of probabilistically weak mild solution is stated for deterministic initial condition and  we have used it for a random initial condition $u_0 \in L^2(\Omega; L^2)$ which is independent of $W$.  
The proof extends to this setting by a similar tightness argument as in~\cite[Chapter~8]{DaPrato2014}, where we first approximate $u_0$ by a random variable in $L^p(\Omega;L^2)$. 
\end{remark}

\section{Auxiliary estimates}\label{AppendixA}

Here we collect some technical estimates that we use in the main proofs.

\begin{lemma}[Heat-kernel derivative bound]\label{lem:q-estimate} Let $K_\varepsilon = \mathcal F^{-1} (\mathds{1}_{[-1,1]}(\varepsilon\cdot))$ on the one-dimensional torus and let $q^\varepsilon(r,\cdot) = \partial_x K_\varepsilon\ast p_r$.  There exists $C>0$ such that for all sufficiently small $\varepsilon>0$, for all $\tau>0$, and for all
$
r\in (0, \tau(\tfrac{\varepsilon}{2\pi})^2]
$ 
and all
$ 
z\in\left[\frac{\varepsilon}{2\pi},\frac{\varepsilon}{\pi}\right],
$
it holds
\[
q^\varepsilon(r,z)
\leq
-C e^{-\tau}\varepsilon^{-2}.
\]
\end{lemma}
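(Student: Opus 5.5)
The plan is to compute $q^\varepsilon(r,z)$ explicitly as a finite Fourier series and to observe that, in the stated range of $r$ and $z$, every term has the same (negative) sign. A uniform lower bound on a block of terms then gives the claimed estimate.

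\textbf{Step 1 (Fourier representation).} On $\mathbb T=\R/\Z$, $\widehat{K_\varepsilon}(k)=\mathds 1_{\{|k|\le 1/\varepsilon\}}$. The heat kernel for $\tfrac12\Delta$ has $\widehat{p_r}(k)=e^{-2\pi^2k^2r}$, and $\partial_x$ acts as multiplication by $2\pi i k$. Set $L:=\lfloor 1/\varepsilon\rfloor$. Pairing the modes $\pm k$ gives
\[
 q^\varepsilon(r,z)=\sum_{|k|\le L}2\pi i k\,e^{-2\pi^2k^2r}e^{2\pi i kz}
 =-4\pi\sum_{k=1}^{L}k\,e^{-2\pi^2k^2r}\sin(2\pi kz).
\]
It therefore suffices to show $S:=\sum_{k=1}^L k\,e^{-2\pi^2k^2r}\sin(2\pi kz)\gtrsim e^{-\tau}\varepsilon^{-2}$.

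\textbf{Step 2 (sign and damping control).} Fix $1\le k\le L\le 1/\varepsilon$.

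\emph{Damping.} For $r\le\tau(\varepsilon/2\pi)^2$ we have $2\pi^2k^2r\le 2\pi^2\varepsilon^{-2}\tau\varepsilon^2/(4\pi^2)=\tau/2$. Hence $e^{-2\pi^2k^2r}\ge e^{-\tau/2}\ge e^{-\tau}$.

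\emph{Sign.} For $z\in[\varepsilon/2\pi,\varepsilon/\pi]$ we have $0<2\pi kz\le 2\pi z/\varepsilon\le 2<\pi$. So every $\sin(2\pi kz)$ is strictly positive, there are no cancellations, and we may drop terms to get a lower bound.

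\textbf{Step 3 (lower bound from high modes).} Restrict the sum to $k\in[L/2,L]$. For $\varepsilon$ small, $L\ge 1/(2\varepsilon)$, so for such $k$ we get $2\pi kz\ge 2\pi\cdot\tfrac{1}{4\varepsilon}\cdot\tfrac{\varepsilon}{2\pi}=\tfrac14$. Combined with the upper bound $2\pi kz\le 2$ from Step 2, this gives
\[
\sin(2\pi kz)\ge \min\{\sin\tfrac14,\sin 2\}=:s_0>0 .
\]
Therefore
\[
S\ge e^{-\tau}s_0\sum_{L/2\le k\le L}k\gtrsim e^{-\tau}L^2\gtrsim e^{-\tau}\varepsilon^{-2},
\]
which yields $q^\varepsilon(r,z)\le -Ce^{-\tau}\varepsilon^{-2}$ with $C$ independent of $\tau$, $r$ and $z$.

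The only delicate point is the sign control in Step 2. It works precisely because $z\lesssim\varepsilon$ keeps every phase $2\pi kz$ inside $(0,\pi)$ for all retained modes $|k|\le 1/\varepsilon$. This is also where the sharp cutoff $\chi=\mathds 1_{[-1,1]}$ simplifies matters: no smooth weights need to be tracked. The remaining requirement, that $\varepsilon$ be small enough for $L\ge 1/(2\varepsilon)$ and for the block $[L/2,L]$ to contain $\gtrsim L$ integers, is routine.
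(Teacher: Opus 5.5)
Your proof is correct and follows the paper's argument essentially line by line: you expand $q^\varepsilon$ as a sine series, observe that all phases $2\pi kz$ lie in $(0,2]$ so there are no cancellations, and keep only the block of modes $k\simeq\varepsilon^{-1}$, where $\sin(2\pi kz)$ is bounded below and the heat damping factor is at least $e^{-\tau}$. The differences are cosmetic. You use the multiplier $e^{-2\pi^2k^2r}$, which is the correct one for $\tfrac12\Delta$; the paper writes $e^{-4\pi^2k^2r}$, which yields $e^{-\tau}$ directly. You also keep the block $[L/2,L]$ with lower bound $\sin\tfrac14$, where the paper keeps $[\tfrac{1}{2\varepsilon},\tfrac{1}{\varepsilon}]$ with $\sin\tfrac12$.
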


\begin{proof}
 By definition,
\[
q^\varepsilon(r,z) 
= 
\sum_{|k|\leq \varepsilon^{-1}}
2\pi i k\, e^{-4\pi^2 k^2 r}e^{2\pi i kz} =-4\pi
\sum_{1\leq k\leq \varepsilon^{-1}}
k e^{-4\pi^2 k^2 r}\sin(2\pi k z).
\]  
Let
$ 
z\in \left[\frac{\varepsilon}{2\pi},\frac{\varepsilon}{\pi}\right],
$ 
then for every integer $k \in \left[
\frac{1}{2\varepsilon}, \frac{1}{\varepsilon}\right]$,
we have $2\pi k z\in [1/2,2]$. Since $[1/2,2]\subset(0,\pi)$,  we obtain $\sin(2\pi k z)\geq \sin(1/2)>0$
for all such $k$ and $z$. Furthermore, notice that $\sin(2\pi k z)\geq 0 $ for $z \in [\tfrac{\varepsilon}{2\pi},\tfrac{\varepsilon}{\pi}]$ and $k \leq \frac{1}{\varepsilon}$. Therefore, we can restrict the Fourier sum to the modes $k \in [
\tfrac{1}{2\varepsilon}, \tfrac{1}{\varepsilon}]$, which gives
\[
q^\varepsilon(r,z)
\leq
-C
\sum_{(2\varepsilon)^{-1}\leq k\leq \varepsilon^{-1}}
k e^{-4\pi^2 k^2r}.
\]
Assume now that
$r\in (0, \tau (\tfrac{\varepsilon}{2\pi})^2)$.
For every $k\leq\varepsilon^{-1}$, we have $e^{-4\pi^2 k^2r}
\geq e^{-\tau}$. It follows that
\[
q^\varepsilon(r,z)
\leq
-Ce^{-\tau}
\sum_{(2\varepsilon)^{-1}\leq k\leq \varepsilon^{-1}} k \leq -Ce^{-\tau}\varepsilon^{-2},
\]
for all sufficiently small \(\varepsilon>0\).
\end{proof}

\begin{lemma}\label{lem:K-eps-error}
    For $\alpha>0$, $p\in[1,\infty]$ and $\varphi \in B^\alpha_{p,\infty}$, we have
    \[
        \|\varphi - K_\varepsilon\ast \varphi\|_{L^p} \lesssim \varepsilon^\alpha \|\varphi\|_{B^\alpha_{p,\infty}}.
    \]
\end{lemma}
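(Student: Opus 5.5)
The approach is a Littlewood--Paley decomposition of $\varphi$ combined with the fact that $\chi(\varepsilon\cdot)\equiv 1$ on a ball of radius $r_0/\varepsilon$, for some $r_0>0$ depending only on $\chi$. Write $\varphi=\sum_{j\geq -1}\Delta_j\varphi$, with dyadic blocks supported in Fourier annuli $\{|k|\sim 2^j\}$. Then
\[
\varphi-K_\varepsilon\ast\varphi=\sum_{j\geq -1}(\Delta_j\varphi-K_\varepsilon\ast\Delta_j\varphi).
\]
Choose $j_0$ maximal with $2^{j_0}\lesssim r_0/\varepsilon$, where the implicit constant is chosen so that the Fourier support of $\Delta_j\varphi$ lies inside $\{\chi(\varepsilon\cdot)=1\}$ for all $j\leq j_0$. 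Since $K_\varepsilon\ast\Delta_j\varphi=\mathcal F^{-1}(\chi(\varepsilon\cdot)\widehat{\Delta_j\varphi})$, every block with $j\leq j_0$ cancels exactly. We are left with
\[
\varphi-K_\varepsilon\ast\varphi=\sum_{j>j_0}(\mathrm{id}-K_\varepsilon\ast)\Delta_j\varphi.
\]

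For the remaining high-frequency blocks we use the triangle inequality in $L^p$ together with Young's inequality:
\[
\|(\mathrm{id}-K_\varepsilon\ast)\Delta_j\varphi\|_{L^p}\leq(1+\|K_\varepsilon\|_{L^1(\T^d)})\|\Delta_j\varphi\|_{L^p}\lesssim 2^{-j\alpha}\|\varphi\|_{B^\alpha_{p,\infty}}.
\]
Here we need $\|K_\varepsilon\|_{L^1(\T^d)}\lesssim 1$ uniformly in $\varepsilon$. On $\R^d$ this holds because $\mathcal F^{-1}(\chi(\varepsilon\cdot))=\varepsilon^{-d}\check\chi(\cdot/\varepsilon)$ with $\check\chi$ Schwartz, so the $L^1$ norm is $\|\check\chi\|_{L^1(\R^d)}$. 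On the torus, $K_\varepsilon$ is the periodization of this kernel by Poisson summation, and periodization does not increase the $L^1$ norm.

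Summing the geometric series gives
\[
\sum_{j>j_0}2^{-j\alpha}\lesssim_\alpha 2^{-j_0\alpha}\lesssim\varepsilon^\alpha,
\]
using $\alpha>0$ and $2^{j_0}\simeq\varepsilon^{-1}$. If $\varepsilon$ is not small, so that $j_0<-1$ and no block cancels, we instead bound directly $\|\varphi-K_\varepsilon\ast\varphi\|_{L^p}\lesssim\|\varphi\|_{L^p}\lesssim\|\varphi\|_{B^\alpha_{p,\infty}}\lesssim\varepsilon^\alpha\|\varphi\|_{B^\alpha_{p,\infty}}$, where the last step uses $\varepsilon\gtrsim 1$. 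Note that on $\T^d$ the Fourier variable is discrete, but the support argument is unchanged.

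The only real subtlety is the uniform $L^1$ bound on the periodized kernel, and the Poisson summation argument above handles it. Everything else is the standard Bernstein-type tail estimate for Besov functions. The convergence of the Littlewood--Paley series in $L^p$ for $p=\infty$ causes no problem, since by definition $\varphi$ lies in the closure of smooth functions, and for $\alpha>0$ the block norms are in any case summable.
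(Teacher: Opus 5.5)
Your proposal is correct and follows essentially the same route as the paper. Both use a Littlewood--Paley decomposition, exact cancellation of all blocks with $2^j\lesssim\varepsilon^{-1}$ (since $\chi(\varepsilon\cdot)\equiv 1$ there), the bound $\|(\mathrm{id}-K_\varepsilon\ast)\Delta_j\varphi\|_{L^p}\lesssim 2^{-j\alpha}\|\varphi\|_{B^\alpha_{p,\infty}}$ on the remaining blocks, and summation of the geometric tail. The differences are minor: you justify the uniform-in-$\varepsilon$ $L^p$-boundedness of $K_\varepsilon\ast$ explicitly via the periodized Schwartz kernel and Young's inequality, and you treat $\varepsilon\gtrsim 1$ separately, whereas the paper covers the first point by citing Bernstein's inequality.
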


\begin{proof} We use that $\mathcal{F} K_{\varepsilon}$ is equal to $1$ on a
  ball of order $\varepsilon^{- 1}$, and therefore with the Littlewood-Paley blocks $(\Delta_j)_{j \geq-1}$ and for $2^{j_0} \simeq
  \varepsilon^{-1}$ we obtain with Bernstein's inequality, see \cite[Lemma 2.1]{Bahouri2011} for a version on $\R^d$ and \cite[Lemma~7]{Gubinelli2015EBP} for the version on $\T^d$ that we apply here,
  \[ \| \varphi - K_{\varepsilon} \ast \varphi
     \|_{L^p} \leq \sum_{j \geq j_0} \| (1 -
     K_{\varepsilon}\ast) \Delta_j \varphi \|_{L^p} \lesssim
     \sum_{j \geq j_0} 2^{-j \alpha} \| \varphi \|_{B^{\alpha}_{p,\infty}} \lesssim \varepsilon^{\alpha} \| \varphi
     \|_{B^{\alpha}_{p,\infty}}. \qedhere
  \]
\end{proof}

\begin{lemma}
  \label{lem:tail-estimate}
  There exist universal constants $c,C>0$, independent of the quantities introduced below, such that all of the following statements hold: If $\beta, \gamma > 0$ and $X$ is a real-valued random variable such that, for every $p\ge 2$,
  \[ \mathbb{E} [| X |^p]^{1 / p} \leq \beta \sqrt{p} + \gamma p,
  \]
  then
  \[ \mathbb{P} (| X | \geq x) \leq C \exp \left( - c
     \left( \frac{x^2}{\beta^2} \wedge \frac{x}{\gamma} \right) \right),
  \]
  for all $x \geq 0$, and moreover,  for every $\lambda \in [0, c / \gamma]$, 
  \[ \mathbb{E} [\exp (\lambda | X |)] \leq C \exp ( C \lambda^2 \beta^2) . \]
  Conversely, if for $K,\beta,\gamma>0$
  \[
    \mathbb P(|X|\geq x) \leq K \exp \left( - \left( \frac{x^2}{\beta^2} \wedge \frac{x}{\gamma} \right) \right),\qquad x \geq 0,
  \]
  then it holds for all $p\geq 1$
  \[
    \mathbb{E} [| X |^p]^{1 / p} \leq CK^{1/p} (\beta \sqrt{p} + \gamma p).
  \]
\end{lemma}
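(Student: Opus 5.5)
The proof naturally splits into three parts: the tail bound, the exponential moment bound, and the converse moment bound. All three rest on the elementary relation between moment growth and tails.

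\emph{Tail bound.} We use Markov's inequality at the optimal moment. For $x\ge 0$ and any $p\ge 2$,
\[
\mathbb P(|X|\ge x)\le \Bigl(\frac{\beta\sqrt p+\gamma p}{x}\Bigr)^p .
\]
We choose $p$ so that the ratio is at most $e^{-1}$. Take $p=\frac{1}{4e^2}\bigl(\frac{x^2}{\beta^2}\wedge\frac{x}{\gamma}\bigr)\cdot 4e^2\cdot c_0$ with a small universal $c_0$. Concretely, set $p:=c_0\bigl(\frac{x^2}{\beta^2}\wedge\frac{x}{\gamma}\bigr)$. Then $\beta\sqrt p\le \sqrt{c_0}\,x$ and $\gamma p\le c_0x$, so the ratio is at most $\sqrt{c_0}+c_0\le e^{-1}$. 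This gives $\mathbb P(|X|\ge x)\le e^{-p}$ whenever $p\ge 2$. When $p<2$, the claimed bound is trivially true once $C\ge e^{2}$, since the right-hand side is then $\ge 1$. This yields the first assertion with $c=c_0$.

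\emph{Exponential moments.} We expand the exponential in its Taylor series,
\[
\mathbb E[e^{\lambda|X|}]=\sum_{k\ge0}\frac{\lambda^k\mathbb E|X|^k}{k!}.
\]
For $k\ge2$ we bound $\mathbb E|X|^k\le 2^{k}(\beta^k k^{k/2}+\gamma^k k^k)$, and use $k!\ge (k/e)^k$. The $\gamma$-part gives $\sum_k(2e\lambda\gamma)^k$, which is bounded by a constant if $\lambda\le c/\gamma$ with $c<1/(4e)$. The $\beta$-part gives $\sum_k (2e\lambda\beta)^k k^{-k/2}$. This is the main point to check: it must be bounded by $C\exp(C\lambda^2\beta^2)$. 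We compare with $\sum_m y^{2m}/m!$, where $y=C'\lambda\beta$: the even terms $k=2m$ give $(C\lambda\beta)^{2m}(2m)^{-m}\le (C\lambda^2\beta^2)^m/m!$ up to $e^m$ factors absorbed into the constant, and the odd terms are handled by Cauchy--Schwarz between neighbouring even ones. The terms $k=0,1$ are bounded via $\mathbb E|X|\le \mathbb E[X^2]^{1/2}$ and $1+a\le e^{a}$ together with $\lambda\beta\le 1+\lambda^2\beta^2$.

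\emph{Converse.} We integrate the tail:
\[
\mathbb E|X|^p=\int_0^\infty px^{p-1}\mathbb P(|X|\ge x)\,dx\le K\int_0^\infty px^{p-1}\bigl(e^{-x^2/\beta^2}+e^{-x/\gamma}\bigr)dx .
\]
This equals $K\bigl(\beta^p\Gamma(p/2+1)+\gamma^p\Gamma(p+1)\bigr)$. Stirling's formula gives $\Gamma(p/2+1)^{1/p}\lesssim\sqrt p$ and $\Gamma(p+1)^{1/p}\lesssim p$. Finally $(a+b)^{1/p}\le a^{1/p}+b^{1/p}$ finishes the argument.

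The only mildly delicate step is organising the Taylor series in the exponential moment bound to obtain the Gaussian factor $\exp(C\lambda^2\beta^2)$; the other two parts are routine.
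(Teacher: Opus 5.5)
Your proposal is correct and follows essentially the same route as the paper: Markov's inequality at the moment $p\asymp \frac{x^2}{\beta^2}\wedge\frac{x}{\gamma}$, a Taylor expansion with $k!\ge (k/e)^k$ for the exponential moments, and integration of the tail against Gamma-function moments for the converse. The only cosmetic difference is in the $\beta$-series. You split it into even and odd terms, whereas the paper uses $k^{-k/2}\le (k!)^{-1/2}$ and then Cauchy--Schwarz against a geometric series to reach $\exp\bigl(\tfrac12(4e\lambda\beta)^2\bigr)$.
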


\begin{proof}
  We first prove the tail estimates under the assumption $\mathbb{E} [| X |^p]^{1 / p} \leq \beta \sqrt{p} + \gamma p$. If
  \[
        p_0 := \frac{1}{4} \left( \frac{x^2}{\beta^2} \wedge \frac{x}{\gamma} \right) < 2,
  \]
  we can enforce the bound by choosing $C>0$ large enough. So let $p_0 \ge 2$ and apply Markov's inequality together with the assumed moment bound for $X$ with $p=p_0$:
  \[ \mathbb{P} (| X | \geq x) \leq \left( \frac{
         \beta \sqrt{p_0}}{ x}  + \frac{\gamma p_0}{x}\right)^{p_0} \leq \left(\frac12 + \frac{1}{4}\right)^{p_0} = \left(\frac34\right)^{p_0} = \exp\left(\frac14 \log \left(\frac34 \right)  \left( \frac{x^2}{\beta^2} \wedge \frac{x}{\gamma} \right)\right) ,
  \]
  so that we can take $c=-\frac14 \log \frac34>0$. 
  
  We now prove the exponential-moment estimate. For every integer $k \geq 2$, the moment assumption gives
  \[
  \mathbb{E}|X|^k \leq (\beta\sqrt{k} + \gamma k)^k \leq 2^{k}( \beta^k k^{k/2}+\gamma^k k^k).
  \]
  Hence, using 
   Stirling's inequality $k! \geq (k / e)^k$ and for $\lambda\ge 0$ such that $2e\gamma\lambda\leq 1/2$ so we can bound the geometric series appearing below by $1$ and so that also $\lambda\gamma\le 1$: 
\begin{align*}
       \mathbb{E} [\exp (\lambda | X |)] & = 1+ \lambda\mathbb E[|X|] + \sum_{k \geq 2}
    \frac{\lambda^k}{k!} \mathbb{E} [| X |^k] \nonumber \\
    & \lesssim 
    1+ \lambda(\beta+\gamma) + \sum_{k \geq 2} \frac{\lambda^k}{k!} 2^k (\beta^k k^{k / 2} + \gamma^k k^k) \nonumber \\
    & \leq 
    1+ \lambda(\beta+\gamma) + \sum_{k \geq 2} \frac{(2 e
    \lambda \beta)^k} { k^{k / 2}} + \sum_{k \geq 2} (2e\gamma \lambda)^k \\ 
    & \lesssim 1+\sum_{k \geq 0} \frac{(2 e
    \lambda \beta)^k} { \sqrt{k!}}  \lesssim  \left(\sum_{k \geq 0} \frac{(4 e
    \lambda \beta)^{2k}} { k!}\right)^{1/2} \left(\sum_{k \geq 0} 2^{-2k}\right)^{1/2} \lesssim \exp\left(\frac12 (4e\lambda\beta)^2\right),
\end{align*} 
which is the claimed bound for the exponential moment.

It remains to show the converse direction:
\begin{align*}
    \mathbb E[|X|^p] & = \int_0^\infty p x^{p-1} \mathbb P(|X| \ge x) \mathd x \\
    & \leq \int_0^\infty p x^{p-1} K \exp\left( - \frac{x^2}{\beta^2} \right) \mathd x + \int_0^\infty p x^{p-1} K \exp\left( - \frac{x}{\gamma} \right) \mathd x \\
    & = K p \beta^{p} \int_0^\infty x^{p-1} \exp(- x^2)\mathd x + Kp\gamma^p \int_0^\infty x^{p-1} \exp(-x)\mathd x \\
    & \lesssim K p \beta^{p} C^p p^{p/2} + K p\gamma^p C^p p^p,
\end{align*}
for some $C>0$, by the scaling in $p$ of the moments of normal and exponential random variables.
\end{proof}

\section*{AI declaration}
In preparing this work we used ChatGPT, versions 5.2--5.5, via ChatGPT and Codex, for the following purposes: discussion of parts of the proof strategy, in the course of which the model suggested an approach for some arguments that we then worked out and wrote up in full; clarifying the exposition of proofs; improving phrasing; producing a first draft of parts of Section~\ref{sec:numerics}, which we revised; and assisting with the numerical experiments. We also used Claude, version Opus 4.8 and 5, for proofreading. All proofs were written by the authors, who take full responsibility for the contents of this paper.

\section*{Acknowledgment}
The authors gratefully acknowledge funding   by Deutsche Forschungsgemeinschaft (DFG)  through  CRC 1114 ``Scaling Cascades in Complex Systems", Project Number 235221301, Project C10 ``Numerical
Analysis for nonlinear SPDE models of particle systems" and through  IRTG 2544 Stochastic Analysis in Interaction (project ID 410208580). AD is additionally grateful for the support from the Berlin Mathematical School (BMS), which is
funded by DFG under Germany's Excellence Strategy – The Berlin Mathematics 
Research Center MATH+ (EXC-2046/1, EXC-2046/2, project ID: 390685689). This material is based upon work supported by the National Science Foundation under Grant No. DMS-2424139, while ADj and NP were in residence at the Simons Laufer Mathematical Sciences Institute in Berkeley, California, during the Fall 2025 semester.

\bibliographystyle{amsalpha}   
\bibliography{DamnjanovicDjurdjevacPerkowski}

\end{document}